\documentclass[12pt]{amsart}
\usepackage{amscd,amsmath,amsthm,amssymb}
\usepackage{algorithm}
\usepackage{algpascal}
\usepackage{tikz}
\usepackage{graphicx}
\usepackage{multicol}
\usepackage{epic,eepic}
\usepackage{amsfonts,amssymb,amscd,amsmath,enumerate,verbatim}
%\psset{unit=0.7cm,linewidth=0.8pt,arrowsize=2.5pt 4}
% for vertex a circle with radius 0.5 mm
%\def\vertex{\pscircle[fillstyle=solid,fillcolor=black]{0.03}}
% for fat lines
%\newpsstyle{fatline}{linewidth=1.5pt}
%\newpsstyle{fyp}{fillstyle=solid,fillcolor=verylight}
%\definecolor{verylight}{gray}{0.97}
%\definecolor{light}{gray}{0.9}
%\definecolor{medium}{gray}{0.85}
%\definecolor{dark}{gray}{0.6}

%    Absolute value notation

%    Blank box placeholder for figures (to avoid requiring any
%    particular graphics capabilities for printing this document).

\unitlength=0.7cm

%
%------    GENERAL MACROS    -----
%
% Standard rings and fields, affine and projective space
%
\def\NZQ{\mathbb}               % the font for N,Z,Q,R,C
\def\NN{{\NZQ N}}

\def\KK{{\NZQ K}}
%
%------------------------------------------------
% Symbols in "Fraktur"
%
\def\frk{\frak}               % font for "Fraktur"

\def\Phi{{\frk n}}
\def\Phi{{\frk N}}
%
%------------------------------------------------

% Small letters in bold
%

%
\def\opn#1#2{\def#1{\operatorname{#2}}} % to make operators
%------------------------------------------------
% Numerical invariants of rings, ideals, and modules
%
\opn\chara{char} \opn\length{\ell} \opn\pd{pd} \opn\rk{rk}
\opn\projdim{proj\,dim} \opn\injdim{inj\,dim} \opn\rank{rank}
\opn\depth{depth} \opn\grade{grade} \opn\height{height}
\opn\embdim{emb\,dim} \opn\codim{codim}

\opn\Tr{Tr} \opn\bigrank{big\,rank}
\opn\superheight{superheight}\opn\lcm{lcm}
\opn\trdeg{tr\,deg}%\emph{
\opn\reg{reg} \opn\lreg{lreg} \opn\ini{in} \opn\lpd{lpd}
\opn\size{size}\opn\bigsize{bigsize}
\opn\cosize{cosize}\opn\bigcosize{bigcosize}
\opn\sdepth{sdepth}\opn\sreg{sreg}
\opn\link{link}\opn\fdepth{fdepth}\opn\e{e}
%------------------------------------------------
% Divisors
%
\opn\div{div} \opn\Div{Div} \opn\cl{cl} \opn\Cl{Cl}
%
%------------------------------------------------
% Subsets of the spectrum of a ring
%
\opn\Spec{Spec} \opn\Supp{Supp} \opn\supp{supp} \opn\Sing{Sing}
\opn\Ass{Ass} \opn\Min{Min}\opn\Mon{Mon} \opn\dstab{dstab} \opn\astab{astab}
\opn\Syz{Syz}
%
%------------------------------------------------
% Standard operations on ideals and modules
%
\opn\Ann{Ann} \opn\Rad{Rad} \opn\Soc{Soc}
%
%------------------------------------------------
% Linear algebra and homology, endo- and automorphisms
%
\opn\Im{Im} \opn\Ker{Ker} \opn\Coker{Coker} \opn\Am{Am} \opn\i{int}
\opn\Hom{Hom} \opn\Tor{Tor} \opn\Ext{Ext} %\opn\End{End}
\opn\Aut{Aut} \opn\id{id}

\opn\nat{nat}
\opn\pff{pf}%   \pf exists already
\opn\Pf{Pf} \opn\GL{GL} \opn\SL{SL} \opn\mod{mod} \opn\ord{ord} \opn\reg{reg}
\opn\Gin{Gin} \opn\Hilb{Hilb}\opn\sort{sort}\opn\width{width} \opn\lk{lk} \opn\del{del}
%
%------------------------------------------------
% Convexity
%
\opn\aff{aff} \opn\con{conv} \opn\relint{relint} \opn\st{st}
\opn\lk{lk} \opn\cn{cn} \opn\core{core} \opn\vol{vol}
\opn\link{link} \opn\star{star}\opn\lex{lex}
%------------------------------------------------
% Graded rings and Rees algebras
\opn\gr{gr}

\def\P{{\mathcal{P}}}
%
%------------------------------------------------
% Polynomials and power series
%

\def\pot#1#2{#1[\kern-0.28ex[#2]\kern-0.28ex]}

%
%------------------------------------------------
% Direct and inverse limits
%
\opn\dirlim{\underrightarrow{\lim}}
\opn\inivlim{\underleftarrow{\lim}}
%
%
% Names with a meaning
%

%
%------------------------------------------------
%

\def\Implies{\ifmmode\Longrightarrow \else
        \unskip${}\Longrightarrow{}$\ignorespaces\fi}
\def\implies{\ifmmode\Rightarrow \else
        \unskip${}\Rightarrow{}$\ignorespaces\fi}
\def\iff{\ifmmode\Longleftrightarrow \else
        \unskip${}\Longleftrightarrow{}$\ignorespaces\fi}

\let\:=\colon
\newtheorem{Theorem}{Theorem}[section]
 \newtheorem{Lemma}[Theorem]{Lemma}
 \newtheorem{Corollary}[Theorem]{Corollary}
 \newtheorem{Proposition}[Theorem]{Proposition}
 \newtheorem{Remark}[Theorem]{Remark}
 \newtheorem{Remarks}[Theorem]{Remarks}
 \newtheorem{Example}[Theorem]{Example}
 \newtheorem{Examples}[Theorem]{Examples}
 \newtheorem{Definition}[Theorem]{Definition}

%
% We like the var forms of some greek letters (as taught in German schools)
%
\let\epsilon\varepsilon
\let\kappa=\varkappa
%
%           We print on A4 paper
%
\textwidth=15cm \textheight=22cm \topmargin=0.5cm
\oddsidemargin=0.5cm \evensidemargin=0.5cm \pagestyle{plain}
%
%           The pf environment of AMSART needs a little help
%
\def\qed{\ifhmode\textqed\fi
      \ifmmode\ifinner\quad\qedsymbol\else\dispqed\fi\fi}
\def\textqed{\unskip\nobreak\penalty50
       \hskip2em\hbox{}\nobreak\hfil\qedsymbol
       \parfillskip=0pt \finalhyphendemerits=0}
\def\dispqed{\rlap{\qquad\qedsymbol}}

%
% ------    END OF GENERAL MACROS    -------
\opn\dis{dis}
\def\pnt{{\raise0.5mm\hbox{\large\bf.}}}

\opn\Lex{Lex}

%-- macro for local cohomology-----------------------------

%-- macro for a complicated condition for the extended
%-- Hochster's formula

\begin{document}
 \title {Properties of the coordinate ring of a convex polyomino}

 \author {Claudia Andrei}
 \address{Faculty of Mathematics and Computer Science, University of Bucharest,  Str. Academiei 14, Bucharest – 010014, Romania, \emph{E-mail address: claudiaandrei1992@gmail.com}}

\begin{abstract}
We classify all convex polyomino whose coordinate rings are Gorenstein. We also compute the Castelnuovo-Mumford regularity of the coordinate ring of any stack polyomino in terms of the smallest interval which contains its vertices. We give a recursive formula for computing the multiplicity of a stack polyomino.
\end{abstract}
\thanks{}
\subjclass[2010]{05E40, 13H10, 13P10}
\keywords{Polyominoes, Gorenstein rings, regularity, multiplicity}
\maketitle

\section*{Introduction}

A polyomino $\P$ is a finite connected set of adjacent cells in the cartesian plane $\mathbb{N}^2$. A cell in $\mathbb{N}^2$ means a unitary square. A polyomino $\P$ is said to be column convex (respectively row convex) if every column (respectively row) is connected. According to \cite{CR}, $\P$ is a convex polyomino if for every two cells of $\P$ there is a monotone path, that is a path having only two directions, between them contained in $\P$.
Convex polyominoes include one-sided ladders, $2$-sided ladders and stack polyominoes.

Let $\KK$ be a field and $S$ be the polynomial ring over $\KK$ in the variables $x_{ij}$ with $(i,j)$ vertex in $\P$. The polyomino ideal $I_{\P}$ is the ideal of $S$ generated by all $2$-inner minors of $\P$ and the coordinate ring of $\P$ is defined as the quotient ring $\KK[\P]=S/I_{\P}$. The ideal $I_{\P}$ and the ring $\KK[\P]$ were first studied by Qureshi in \cite{Q}. There it was shown that if $\P$ is a convex polyomino, then $\KK[\P]$ is a normal Cohen-Macaulay domain. This was proved by viewing the ring $\KK[\P]$ as the edge ring of a suitable bipartite graph $G_{\P}$ associated with $\P$.

Understanding the graded free resolution of polyomino ideals is a difficult task. A first step in this direction was done in \cite{EHH}, where the convex polyomino ideals which are linearly related or have a linear resolution are classified.

Our paper is organised as follows. In Section 1, we recall the basic terminology related to convex polyominoes and their associated bipartite graphs. The first main result of this paper appears in Section 2, where we classify all convex polyominoes whose coordinate rings are Gorenstein (Theorem~\ref{mainresult}). For this classification, we use a result due to Ohsugi and Hibi (\cite{HO}) who classified all $2$-connected bipartite graphs whose edge rings are Gorenstein. In the case of stack polyominoes, we recover the classification of all Gorenstein stack polyominoes given in \cite{Q}; see Section 3.

In Section 4, we compute the Castelnuovo-Mumford regularity of the coordinate ring of any stack polyomino in terms of the smallest interval which contains its vertices (Corollary~\ref{regularity}). As a byproduct, we get the $a$-invariant of $\KK[\P]$ by Theorem~\ref{p}. The computation of the regularity uses as an important tool the formula of the $a$-invariant of $\KK[G_{\P}]$ given in \cite{VV}.

Finally, in Section 5 we give a recursive formula for computing the multiplicity of $\KK[\P]$ if $\P$ is a stack polyomino and we show some concrete cases when this formula may be applied.

\section{Preliminaries}

\label{firstsection}
To begin with, we present some concepts and notations about collections of cells and polyominoes.

We consider on $\mathbb{N}^2$ the natural partial order defined as follows: $(i,j)\leq (k,l)$ if and only if $i\leq k$ and $j\leq l$. If $a,b\in \mathbb{N}^2$ with $a\leq b$, then the set \[[a,b]=\{c\in \NN^2\mid a\leq c\leq b\}\] is an interval in $\NN^2$. If $a=(i,j)$ and $b=(k,l)\in\NN^2$ have the property that $j=l$ (respectively $i=k$), then the interval $[a,b]$ is a called horizontal (respectively vertical) edge interval.

The interval \[C=[a, a+(1,1)]\] is called a cell in $\NN^2$ with lower left corner $a$. The elements of $C$ are called vertices of $C$ and we denote their set by $V(C)$. The set of edges of $C$ is \[E(C)=\{\{a,a+(0,1)\},\{a,a+(1,0)\},\{a+(0,1),a+(1,1)\},\{a+(1,0),a+(1,1)\}\}.\]

We consider $A$ and $B$ two cells in $\NN^2$ with lower left corners $(i,j)$ and $(k,l)$. Then the set \[[A,B]=\{E| E\text{ is a cell in }\NN^2 \text{ with lower left corner }(r,s) \text{ such that } \]\[i\leq r\leq k\text{ and } j\leq s\leq l\}\] is called a cell interval. In the case that $j=l$ (respectively $i=k$), the cell interval $[A,B]$ is called a horizontal (respectively vertical) cell interval.

Let $\P$ be a finite collection of cells of $\NN^2$. The vertex set of $\P$ and the edge set of $\P$ are
\[V(\P)=\cup_{C\in \P}V(C) \text{ and } E(\P)=\cup_{C\in\P}E(C).\]
Two cells $A$ and $B$ of $\P$ are connected, if there is a sequence of cells of $\P$ given by $A=A_1,A_2, \ldots, A_{n-1}, A_n=B$ such that $A_i\cap A_{i+1}$ is an edge of $A_i$ and $A_{i+1}$  for each $i\in \{1,\ldots, n-1\}$. Such a sequence is called a path connecting the cells $A$ and $B$.
\begin{Definition}
A collection of cells $\P$ is called a polyomino if any two cells of $\P$ are connected.
\end{Definition}
\begin{Definition}
A polyomino $\P$ is called row (respectively column) convex, if for any two cells $A$ and $B$ of $\P$ with left lower corners $a=(i,j)$ and $b=(k,j)$ (respectively $a=(i,j)$ and $b=(i,l)$), the horizontal (respectively vertical) cell interval $[A,B]$ is contained in $\P$. If $\P$ is row and column convex, then $\P$ is called a convex polyomino.
\end{Definition}

In Figure~\ref{fig1} we have two examples of polyominoes. The right side one is a convex polyomino, while the other one is not convex because it is not column convex.

\begin{figure}
  \centering
  \begin{tikzpicture}[domain=-1:12]
        \filldraw[fill=black!5!white, draw=black] (0,0) rectangle (1,1);
        \filldraw[fill=black!5!white, draw=black] (1,0) rectangle (2,1);
        \filldraw[fill=black!5!white, draw=black] (2,0) rectangle (3,1);
        \filldraw[fill=black!5!white, draw=black] (1,1) rectangle (2,2);
        \filldraw[fill=black!5!white, draw=black] (2,1) rectangle (3,2);
        \filldraw[fill=black!5!white, draw=black] (3,1) rectangle (4,2);
        \filldraw[fill=black!5!white, draw=black] (2,2) rectangle (3,3);
        \filldraw[fill=black!5!white, draw=black] (3,2) rectangle (4,3);
        \filldraw[fill=black!5!white, draw=black] (0,3) rectangle (1,4);
        \filldraw[fill=black!5!white, draw=black] (1,3) rectangle (2,4);
        \filldraw[fill=black!5!white, draw=black] (2,3) rectangle (3,4);
        \filldraw[fill=black!5!white, draw=black] (3,3) rectangle (4,4);
        \filldraw (2, -1) circle (0 pt) node[below]{A polyomino};
        \filldraw[fill=black!5!white, draw=black] (8,0) rectangle (9,1);
        \filldraw[fill=black!5!white, draw=black] (7,1) rectangle (8,2);
        \filldraw[fill=black!5!white, draw=black] (8,1) rectangle (9,2);
        \filldraw[fill=black!5!white, draw=black] (9,1) rectangle (10,2);
        \filldraw[fill=black!5!white, draw=black] (8,2) rectangle (9,3);
        \draw[-,black, dashed] (7,0)node[below] {$1$} -- (7,4);
        \draw[-,black, dashed] (8,0)node[below] {$2$} -- (8,4);
        \draw[-,black, dashed] (9,0)node[below] {$3$} -- (9,4);
        \draw[-,black, dashed] (10,0)node[below] {$4$} -- (10,4);
        \draw[-,black, dashed] (7,0)node[left] {$1$} -- (11,0);
        \draw[-,black, dashed] (7,1)node[left] {$2$} -- (11,1);
        \draw[-,black, dashed] (7,2)node[left] {$3$} -- (11,2);
        \draw[-,black, dashed] (7,3)node[left] {$4$} -- (11,3);
        \filldraw (8.5, -1) circle (0 pt) node[below] {A convex polyomino};
  \end{tikzpicture}
  \caption{}\label{fig1}
\end{figure}

Let $\P$ be a convex polyomino and $[a,b]\subset \NN^2$ be the smallest interval which contains $V(P)$. After a shift of coordinates, we may assume that $a=(1,1)$ and $b=(m,n)$ and thus, we say that $\P$ is a convex polyomino on $[m]\times [n]$, where for a positive integer $a$, $[a]$ denotes the set $\{1,\ldots, a\}$. For example, the right sight polyomino of Figure~\ref{fig1} is a convex polyomino on $[4]\times[4]$.

Fix a field $\mathbb{K}$ and a polynomial ring $S=\mathbb{K}[x_{ij}\mid (i,j)\in V(\P)]$. We consider the ideal $I_{\P}\subset S$ generated by all binomials $x_{il}x_{kj}-x_{ij}x_{kl}$ for which $[(i,j),(k,l)]$ is an interval in $\P$. The $\mathbb{K}$-algebra $S/I_{\P}$ is denoted $\mathbb{K}[\P]$ and is called the coordinate ring of $\P$. By \cite[Theorem 2.2]{Q}, $\mathbb{K}[\P]$ is a normal Cohen-Macaulay domain.

Let $\P$ be a convex polyomino on $[m]\times[n]$. The ring $R=\mathbb{K}[x_iy_j\mid (i,j)\in V(\P)]\subset \mathbb{K}[x_1,\ldots, x_m,y_1,\ldots, y_n]$ can be viewed as the edge ring of the bipartite graph $G_{\P}$ with vertex set $V(G_{\P})=X\cup Y$, where $X=\{x_1,\ldots, x_m\}$ and $Y=\{y_1,\ldots, y_n\}$ and edge set $E(G_{\P})=\{\{x_i, y_j\}\mid (i,j)\in V(\P)\}$. In Figure~\ref{fig 3} we displayed the bipartite graph attached to a cell in $\mathbb{N}^2$. According to \cite{Q}, $\mathbb{K}[{\P}]$ can be identified with $\mathbb{K}[G_{\P}]$.

\begin{center}
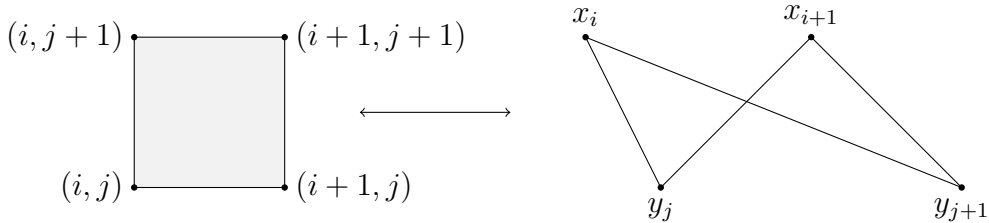
\begin{figure}
  \begin{tikzpicture}[domain=0:12]
        \filldraw[fill=black!5!white, draw=black] (0,0) rectangle (2,2);
        \filldraw (0,0) circle (1pt) node[left] {$(i,j)$};
        \filldraw (2,0) circle (1pt) node[right] {$(i+1,j)$};
        \filldraw (0,2) circle (1pt) node[left] {$(i,j+1)$};
        \filldraw (2,2) circle (1pt) node[right] {$(i+1,j+1)$};
        \draw[<->,black] (3,1) -- (5,1);
        \filldraw (6,2) circle (1pt) node[above] {$x_i$};
        \filldraw (9,2) circle (1pt) node[above] {$x_{i+1}$};
        \filldraw (7,0) circle (1pt) node[below] {$y_j$};
        \filldraw (11,0) circle (1pt) node[below] {$y_{j+1}$};
        \draw[-,black] (6,2) -- (7,0);
        \draw[-,black] (9,2) -- (7,0);
        \draw[-,black] (6,2) -- (11,0);
        \draw[-,black] (9,2) -- (11,0);
    \end{tikzpicture}
  \caption{The bipartite graph attached to a cell in $\mathbb{N}^2$}\label{fig 3}
\end{figure}
\end{center}

\section{Gorenstein convex polyominoes}

Let $\P$ be a convex polyomino on $[m]\times[n]$. We set $X=\{x_1,\ldots, x_m\}$ and $Y=\{y_1,\ldots, y_n\}$ and, if needed, we identify the point $(x_i, y_j)$ in the plane with the vertex $(i,j)\in V(\P)$.

Let $A$ and $B$ be two cells in $\P$. Recall that $A$ and $B$ are connected by a path if there is a sequence of cells in $\P$, $A=A_1, A_2, \ldots, A_{r-1}, A_r=B$, with the property that $A_i\cap A_{i+1}$ is an edge of $A_i$ and $A_{i+1}$, for each $i\in[r-1]$. We denote by $(x_{j_i}, y_{k_i})$ the lower left corner of $A_i$, for all $i\in[r]$. Every path in $\P$ may go in at most four directions which are given below:
\begin{enumerate}
  \item Est if $(x_{j_{i+1}}, y_{k_{i+1}})-(x_{j_i}, y_{k_i})=(1,0)$ for some $i\in[r]$;
  \item West if $(x_{j_{i+1}}, y_{k_{i+1}})-(x_{j_i}, y_{k_i})=(-1,0)$ for some $i\in[r]$;
  \item South if $(x_{j_{i+1}}, y_{k_{i+1}})-(x_{j_i}, y_{k_i})=(0,-1)$ for some $i\in[r]$;
  \item North if $(x_{j_{i+1}}, y_{k_{i+1}})-(x_{j_i}, y_{k_i})=(0,1)$ for some $i\in[r]$.
\end{enumerate}
We say that a path connecting two cells is monotone if it goes only in two directions.
A characterization of convex polyominoes in terms of paths is given by the following Proposition from \cite{CR}.
\begin{Proposition}\label{caract}\cite[Proposition 1]{CR}
A polyomino $\P$ is convex if and only if for every pair of cells there exists a monotone path connecting them and contained in $\P$.
\end{Proposition}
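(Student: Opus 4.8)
The plan is to prove the two implications separately, expecting the forward-to-monotone direction to carry essentially all the difficulty. For the implication that the monotone path property forces convexity, I would argue directly. Let $A$ and $B$ be two cells with lower left corners $(i,j)$ and $(k,j)$ lying in a common row, say $i<k$, and let $\gamma$ be a monotone path joining them. Since $\gamma$ uses at most two of the four directions and its endpoints share the row coordinate $j$, it cannot contain a North (or South) step: such a step would have to be undone by a step in the opposite vertical direction, and together with the horizontal steps needed to change the column from $i$ to $k$ this would require three directions. Hence every step of $\gamma$ is horizontal, so all its cells lie in row $j$ and occupy a set of columns that is connected and contains both $i$ and $k$, thus all of $[i,k]$. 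This is exactly row convexity, and the symmetric argument over a column gives column convexity.

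For the converse, I would use induction. After reflecting the configuration in a vertical and/or horizontal line — operations that preserve both convexity and the conclusion — I may assume the two cells have lower left corners $a=(i,j)$ and $b=(k,l)$ with $i\le k$ and $j\le l$, and I induct on $(k-i)+(l-j)$. If $i=k$ the two cells lie in one column and column convexity supplies a vertical path; if $j=l$, row convexity supplies a horizontal one; both are monotone. In the remaining case $i<k$ and $j<l$ I would move one cell East, to $(i+1,j)$, or North, to $(i,j+1)$, whichever lies in $\P$, and finish by the inductive hypothesis: an East step prepended to an East--North staircase, or a North step prepended to one, is again an East--North staircase (a one-direction path counting as a degenerate monotone path).

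The crux, and the step I expect to be the main obstacle, is to guarantee that at least one of $(i+1,j)$ and $(i,j+1)$ actually lies in $\P$ — in other words, that a cell of a convex polyomino is never an up-right dead end as long as some cell sits strictly to its upper right. I would prove this key claim by contradiction. If both $(i+1,j)\notin\P$ and $(i,j+1)\notin\P$, then, because each row and each column of $\P$ is an interval, row $j$ contains no cell in a column $>i$ and column $i$ contains no cell in a row $>j$. Now I invoke connectivity of $\P$: a sequence of edge-adjacent cells from $a$ to $b$ must at some step pass from the region where the column is $\le i$ or the row is $\le j$ into the upper-right region where the column is $>i$ and the row is $>j$. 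Inspecting the single unit step that effects this crossing, the cell immediately before it is forced to be either a cell of column $i$ with row $>j$ or a cell of row $j$ with column $>i$; each possibility contradicts one of the two capping statements. This contradiction establishes the claim and carries the induction through, completing the proof.
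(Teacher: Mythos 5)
The paper does not actually prove this proposition: it is imported wholesale from \cite{CR} (Castiglione--Restivo) as a cited result, so there is no internal argument to compare yours against. Judged on its own terms, your proof is correct and self-contained. The forward direction is right: a monotone path between two cells of the same row has zero net vertical displacement, so a North step would force a South step, and the nonzero horizontal displacement forces a horizontal step, giving three directions; hence the path is purely horizontal and sweeps out the whole horizontal cell interval, which is exactly row convexity. Your key claim for the converse is also correct, and its proof is genuinely the crux: if both $(i+1,j)$ and $(i,j+1)$ were missing, row and column convexity cap row $j$ at column $i$ and column $i$ at row $j$, and then any edge-adjacent sequence from $(i,j)$ to $(k,l)$ must cross from the region $\{\text{column}\leq i \text{ or row}\leq j\}$ into $\{\text{column}>i \text{ and row}>j\}$, and the cell just before the crossing step violates one of the two caps. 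One point you should make fully explicit when writing this up: the induction has to carry the \emph{stronger} statement that there exists a path using only East and North steps, not merely ``some monotone path.'' The weaker hypothesis breaks: a monotone path from $(i,j+1)$ to $(k,l)$ could in principle use East and West steps (when $l=j+1$), and prepending a North step to it produces a three-direction path. Your parenthetical remark about prepending steps to an East--North staircase shows you intend exactly this strengthened hypothesis, and the base cases (one-direction paths from row or column convexity) are degenerate staircases, so with that statement as the formal induction claim the argument is complete.
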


In the next proposition we show that the bipartite graph $G_{\P}$ associated with $\P$ is $2$-connected. Let us first recall the definition of $2$-connectivity.
\begin{Definition}
If $G$ is a finite connected graph on the vertex set $V$, then given a subset $\emptyset \neq W\subset V$, $G_W$ denotes the induced subgraph of $G$ on $W$. We say that $G$ is $2$-connected if $G$ together with $G_{V\setminus \{v\}}$ for all $v\in V$ are connected.
\end{Definition}

\begin{Proposition}
Let $\P$ be a convex polyomino on $[m]\times [n]$. Then the bipartite graph $G_{\P}$ is $2$-connected.
\end{Proposition}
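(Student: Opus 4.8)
The plan is to realise $G_{\P}$ as a union of $4$-cycles, one for each cell of $\P$, and to glue these cycles together using only the connectivity of $\P$. Concretely, to a cell $C$ with lower left corner $(i,j)$ I attach its four vertices $(i,j),(i+1,j),(i,j+1),(i+1,j+1)\in V(\P)$, which in $G_{\P}$ contribute the four edges $\{x_i,y_j\},\{x_{i+1},y_j\},\{x_i,y_{j+1}\},\{x_{i+1},y_{j+1}\}$. These form a $4$-cycle $H_C\colon x_i-y_j-x_{i+1}-y_{j+1}-x_i$, which is itself $2$-connected. Since $\{x_i,y_j\}\in E(G_{\P})$ exactly when $(i,j)\in V(\P)$, and every vertex of $\P$ is a corner of some cell, every edge of $G_{\P}$ lies on some $H_C$; thus $G_{\P}=\bigcup_{C\in\P}H_C$ on the edge level.

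The key tool I would isolate is a gluing lemma: if $G_1,G_2$ are $2$-connected subgraphs of a graph with $|V(G_1)\cap V(G_2)|\geq 2$, then $G_1\cup G_2$ is $2$-connected. Its proof is short: the union of two connected graphs sharing a vertex is connected, and given any vertex $w$ one may choose a shared vertex $s\neq w$ (there are at least two shared vertices), whence each $G_i-w$ is connected (either $w\notin V(G_i)$, or by $2$-connectivity of $G_i$) and contains $s$, so $(G_1\cup G_2)-w=(G_1-w)\cup(G_2-w)$ is connected. Next I would use that $\P$ is a polyomino to order its cells as $A_1,\ldots,A_N$ so that each $A_k$ with $k\geq 2$ shares an edge with some earlier $A_l$; such an order exists because the cell-adjacency graph of $\P$ is connected (take any spanning-tree traversal). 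For adjacent cells, the two $\P$-vertices on their common edge interval produce two edges of $G_{\P}$ lying in both $H_{A_k}$ and $H_{A_l}$, and a direct check shows these two $4$-cycles in fact share three vertices of $G_{\P}$, so in particular $|V(H_{A_k})\cap V(\bigcup_{l<k}H_{A_l})|\geq 2$. Applying the gluing lemma inductively, starting from the $2$-connected $4$-cycle $H_{A_1}$, then shows that $\bigcup_{k}H_{A_k}=G_{\P}$ is $2$-connected.

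The one point that genuinely uses the hypothesis that $[(1,1),(m,n)]$ is the \emph{smallest} interval containing $V(\P)$ is that no vertex $x_i$ or $y_j$ is isolated; otherwise $G_{\P}$ would already fail to be connected. I would dispose of this first: minimality forces a vertex of $\P$ with first coordinate $1$ and one with first coordinate $m$, hence cells in the first and in the $(m-1)$-st cell-column, and connectivity of $\P$ then forces every intermediate cell-column $1,\ldots,m-1$ to be occupied, since an empty column would separate the cells to its left from those to its right. Consequently each $x_i$ is a corner coordinate of some cell, i.e.\ non-isolated, and symmetrically for each $y_j$, so the vertex sets also match and $G_{\P}=\bigcup_C H_C$ as graphs. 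I expect this coverage step and the careful bookkeeping in the gluing lemma to be the only places needing attention; note that the whole argument uses only that $\P$ is connected, so convexity (and Proposition~\ref{caract}) is not strictly required here, although it certainly holds under our hypotheses.
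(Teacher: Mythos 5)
Your proof is correct, but it takes a genuinely different route from the paper. The paper leans on convexity throughout: it invokes Proposition~\ref{caract} to get a monotone path of cells between any two cells, reads off the turning points of that path as a walk in $G_{\P}$ (giving connectivity), and then, to show that deleting a vertex $x_k$ cannot disconnect the graph, it reroutes any path that turns at column $k$ through the adjacent column $k-1$ or $k+1$, which exists inside $\P$ again by convexity. You instead decompose $G_{\P}$ as a union of $4$-cycles $H_C$, one per cell, verify (correctly) that the cycles of two edge-adjacent cells share three vertices of $X\cup Y$, order the cells by a spanning-tree traversal of the (connected) cell-adjacency graph, and apply inductively the standard gluing lemma that a union of two $2$-connected subgraphs sharing at least two vertices is $2$-connected; your separate treatment of the coverage issue (minimality of the bounding box plus connectivity forcing every cell-column and cell-row to be occupied, so no $x_i$ or $y_j$ is isolated) is also needed and correctly handled, since the paper's graph $G_{\P}$ is defined on all of $\{x_1,\dots,x_m\}\cup\{y_1,\dots,y_n\}$. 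The trade-off: the paper's argument is tied to the geometry of convex polyominoes and requires a somewhat delicate rerouting case analysis, while yours is more modular and, as you note, proves the stronger statement that $G_{\P}$ is $2$-connected for an \emph{arbitrary} polyomino, convexity playing no role; what convexity buys the paper's approach is an explicit, constructive description of the connecting paths, which is in the spirit of the combinatorial tools ($N_Y$, neighbor intervals) developed in the rest of that section.
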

\begin{proof}
  Firstly, we prove that the bipartite graph $G_{\P}$ is connected. For that it is sufficient to choose $x, x'\in \{x_1,\ldots, x_n\}$ and to find a path between them in $G_{\P}$. Let $x,x',y,y'\in V(G)$ such that $(x,y),(x',y')\in V(\P)$.
  Since $\P$ is a convex polyomino, there is a set of vertices of $\P$ \[d=\{(x,y)=(x_{i_0},y_{i_0}),(x_{i_1}, y_{i_1}),\ldots, (x_{i_{r-1}}, y_{i_{r-1}}), (x_{i_r}, y_{i_r})=(x',y')\}\] which are left corners of the cells of a monotone path. Therefore, we have \[(x_{i_{j+1}}, y_{i_{j+1}})-(x_{i_j}, y_{i_j})\in \{(0,-1),(0,1),(1,0),(-1,0)\}\] for any $j\in \{0,\ldots r-1\}$. For this set we denote by
  \[A_d=\{(x,y)\}\cup\{(x_{i_j},y_{i_j})\in d\mid j\in [r-1], (x_{i_{j+1}}, y_{i_{j+1}})-\]\[-(x_{i_{j-1}},y_{i_{j-1}})\in \{(1,1), (-1, -1)\}\}\cup\{(x',y')\}\] the set of the vertices in $d$ where the path changes its direction. We observe that any two consecutive vertices of $A_d$ are either in the same horizontal edge interval or in the same vertical edge interval. Thus, the set $A_d$ determines a path between $x$ and $x'$ in $G_{\P}$.

  In order to complete the proof, we show that for any $k\in [m]$, the graph $G_{\P_V}$ is connected, where $V=V(G_{\P})\setminus\{x_k\}$. Let $G=G_{\P_V}$ and consider $x,x',y,y'\in V(G)$ such that $(x,y),(x',y')\in V(\P)$.

   We consider the set of vertices of $\P$ in a similar way as in the first part of the proof \[d=\{(x,y)=(x_{i_0},y_{i_0}),(x_{i_1}, y_{i_1}),\ldots, (x_{i_{r-1}}, y_{i_{r-1}}), (x_{i_r}, y_{i_r})=(x',y')\}\subset V(\P)\] and  we denote by $A_d$ the set of vertices in $d$ where the path changes its direction.

  If for any $j\in [r-1]$ with $(x_{i_j}, y_{i_j})\in A_d$, we have $x_{i_j}\neq x_k$, then the elements of the set $A_d$ determine a path between $x$ and $x'$ by the argument used above.

  If there is $j\in [r-1]$ such that $(x_k, y_{i_j})\in A_d$, then we show that there is another set $d'$ of vertices of $\P$ for which $A_{d'}$ has no vertex with $x$-coordinate equal to $x_k$ and we reduce to the first case.

  We can choose only two elements, $j_1, j_2\in [r-1]$ such that $(x_k, y_{i_{j_1}}), (x_k, y_{i_{j_2}})\in A_d$. Since $\P$ is a convex polyomino, we have \[(x_{k-1}, y_{i_j})\in V(\P)\text{ or }(x_{k+1}, y_{i_j})\in V(\P)\text{, }\forall i_j\in\{i_{j_1},i_{j_1}+1,\ldots, i_{j_2}\}.\] This implies that
  \[d'=\{(x,y)=(x_{i_0}, y_{i_0}), (x_{i_1}, y_{i_1}),\ldots, (x_{i_{j_1-1}}, y_{i_{j_1}}), (x_{i_{j_1-1}}, y_{i_{j_1+1}}), \ldots, (x_{i_{j_1-1}}, y_{i_{j_2}}),\]\[ (x_{i_{j_2}}, y_{i_{j_2}}), (x_{i_{j_2+1}}, y_{i_{j_2+1}}), \ldots, (x_{i_r}, y_{i_r})=(x',y')\}\text{ or }\]
  \[d'=\{(x,y)=(x_{i_0}, y_{i_0}), (x_{i_1}, y_{i_1}),\ldots, (x_{i_{j_1}}, y_{i_{j_1}}), (x_{i_{j_1+1}}, y_{i_{j_1}}), (x_{i_{j_1+1}}, y_{i_{j_1+1}}),\]\[ \ldots, (x_{i_{j_1+1}}, y_{i_{j_2}})=(x_{i_{j_2+1}},y_{i_{j_2+1}}), (x_{i_{j_2+2}}, y_{i_{j_2+2}}), \ldots, (x_{i_r}, y_{i_r})=(x',y')\}.\] \end{proof}

For the characterisation of Gorenstein convex polyominoes  we need the following theorem due to Ohsugi and Hibi (\cite{HO}).
\begin{Theorem}\label{basic}\cite[Theorem 2.1]{HO}
  Let $G$ be a bipartite graph on $X\cup Y$ and suppose that $G$ is $2$-connected. Then the edge ring of $G$ is Gorenstein if and only if $x_1\cdots x_my_1\cdots y_n\in \mathbb{K}[G]$ and one has $|N(T)|=|T|+1$ for every subset $T\subset X$ such that $G_{T\cup N(T)}$ is connected and that $G_{(X\cup Y)\setminus(T\cup N(T))}$ is a connected graph with at least one edge.
\end{Theorem}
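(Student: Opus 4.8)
The plan is to realize $\mathbb{K}[G]$ as a normal affine semigroup ring and apply the interior-point (Danilov--Stanley) criterion for the Gorenstein property. Concretely, $\mathbb{K}[G]$ is the semigroup ring $\mathbb{K}[S]$, where $S\subseteq\mathbb{Z}^{m+n}$ is generated by the vectors $e_i+f_j$ indexed by the edges $\{x_i,y_j\}\in E(G)$, with $e_1,\ldots,e_m$ and $f_1,\ldots,f_n$ the standard bases of $\mathbb{Z}^X$ and $\mathbb{Z}^Y$. The first input I would use is that the edge ring of a bipartite graph is always normal; hence by Hochster's theorem $\mathbb{K}[S]$ is Cohen--Macaulay, and its canonical module is the ideal spanned by the monomials corresponding to the relative interior lattice points $\relint(C)\cap\mathbb{Z}^{m+n}$, where $C=\mathbb{R}_{\geq 0}S$ is the edge cone.

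Given normality, the Gorenstein property is equivalent to the canonical module being principal, which holds if and only if there is a lattice point $c\in\relint(C)$ with $\relint(C)\cap\mathbb{Z}^{m+n}=c+S$. Writing the primitive inner facet normals of $C$ as $u_F$, such a $c$ exists precisely when some lattice point satisfies $\langle c,u_F\rangle=1$ for every facet $F$, and this $c$ is then the unique minimal interior point. The strategy is therefore to (i) identify the candidate $c$, (ii) translate ``$c\in S$'' into the monomial condition of the theorem, and (iii) translate the facet equalities $\langle c,u_F\rangle=1$ into the numerical condition $|N(T)|=|T|+1$.

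For steps (i)--(ii), note that every $(a,b)\in S$ satisfies $\sum_i a_i=\sum_j b_j$, so $C$ lies in this hyperplane and is only $(m+n-1)$-dimensional; the coordinate inequalities $a_i\geq 0$ and $b_j\geq 0$ force the candidate minimal interior point to be the all-ones vector $\mathbf{1}$, and $\mathbf{1}\in S$ is exactly the statement $x_1\cdots x_m y_1\cdots y_n\in\mathbb{K}[G]$. The $2$-connectivity of $G$ is what guarantees that $\mathbf{1}$ genuinely lies in the relative interior of $C$ once it lies in $S$. For step (iii), I would use the description of the facets of the bipartite edge cone: besides the coordinate facets, the supporting hyperplanes come from the valid inequalities $\sum_{x_i\in T}a_i\leq\sum_{y_j\in N(T)}b_j$ for $T\subseteq X$, whose inner normal $u_T=\sum_{y_j\in N(T)}f_j^{*}-\sum_{x_i\in T}e_i^{*}$ satisfies $\langle\mathbf{1},u_T\rangle=|N(T)|-|T|$. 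The Gorenstein equalities $\langle\mathbf{1},u_T\rangle=1$ then read $|N(T)|=|T|+1$.

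The main obstacle is step (iii): deciding exactly which subsets $T$ give genuine facets rather than redundant or lower-dimensional faces. This is where the two connectivity hypotheses enter --- that $G_{T\cup N(T)}$ be connected and that $G_{(X\cup Y)\setminus(T\cup N(T))}$ be connected with at least one edge are precisely the combinatorial conditions under which $\sum_{x_i\in T}a_i\leq\sum_{y_j\in N(T)}b_j$ cuts out a facet of the $(m+n-1)$-dimensional cone $C$. Establishing this facet classification carefully, together with checking that the $u_T$ are primitive in the sublattice of the hyperplane $\sum_i a_i=\sum_j b_j$ (so that lattice distance $1$ is the correct interior condition), is the technical crux; the remainder is bookkeeping with the Danilov--Stanley description of the canonical module.
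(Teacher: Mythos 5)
The paper never proves this statement: Theorem~\ref{basic} is quoted as a black box from Ohsugi--Hibi \cite[Theorem 2.1]{HO}, so there is no internal proof to compare against, only the original argument. Your outline does follow essentially that original toric route: normality of bipartite edge rings, Hochster plus the Danilov--Stanley description of the canonical module as the span of the monomials corresponding to $\relint(C)\cap\mathbb{Z}S$, and the reduction of the Gorenstein property to the existence of a lattice point at lattice distance $1$ from every facet of the edge cone.

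As a proof, however, there is a genuine gap, and it sits exactly where you place the ``technical crux'': the completeness of the facet classification. You must prove that, for a $2$-connected bipartite graph, the facets of $C$ are precisely the coordinate hyperplanes together with the hyperplanes $\sum_{x_i\in T}a_i=\sum_{y_j\in N(T)}b_j$ for those $T\subset X$ such that $G_{T\cup N(T)}$ is connected and $G_{(X\cup Y)\setminus(T\cup N(T))}$ is connected with at least one edge. Without this, the sufficiency direction collapses (there could be further facets on which $\mathbf{1}$ fails the distance-one condition), and the necessity direction cannot convert facet equalities into the stated combinatorial condition; this classification \emph{is} the combinatorial content of the theorem, so deferring it leaves the proof unfinished. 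Moreover, one of your assertions is false: $2$-connectivity does \emph{not} guarantee that $\mathbf{1}\in\relint(C)$ once $\mathbf{1}\in S$. Take two copies of $K_{2,2}$, on $\{x_1,x_2\}\cup\{y_1,y_2\}$ and on $\{x_3,x_4\}\cup\{y_3,y_4\}$, and add the two edges $\{x_3,y_1\}$ and $\{x_4,y_2\}$: this graph is $2$-connected and has a perfect matching, yet for $T=\{x_1,x_2\}$ one has $N(T)=\{y_1,y_2\}$, so $\langle\mathbf{1},u_T\rangle=0$ and $\mathbf{1}$ lies \emph{on} a facet (consistently, this graph is not Gorenstein, since $|N(T)|=|T|$). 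The actual role of $2$-connectivity is different: it ensures that every coordinate hyperplane is a facet (the edge cone of $G$ minus a vertex still has dimension $m+n-2$), which is what forces any Gorenstein point $c$ to equal $\mathbf{1}$; interiority of $\mathbf{1}$ is then equivalent to the strict Hall-type inequalities on the facet-defining sets $T$ and must be derived from the facet equalities, not from $2$-connectivity. In a careful execution this misattribution is harmless, but combined with the missing facet classification the proposal is a correct plan rather than a proof.
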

We recall that $N(T)=\{y\in V(G)\mid \{x,y\}\in E(G) \text{ for some }x\in T\}$ represents the set of the neighbors of the subset $T\subset V(G)$.

Note that $x_1\cdots x_my_1\cdots y_n\in \mathbb{K}[G]$ if and only if $G$ possesses a perfect matching (i.e. there is a set of edges $E\subset E(G)$ with the property that no two of them have a common vertex and $\cup_{\{x,y\}\in E}\{x,y\}=V(G)$). A characterization of the bipartite graph which possesses a perfect matching is given by Villarreal (\cite{V}).

\begin{Theorem}\label{hall}\cite[Theorem 7.1.9]{V}
A bipartite graph $G$ with the set of vertices $V=X\cup Y$ possesses a perfect matching if and only if one has $|N(T)|\geq|T|$ for every independent subset of vertices $T\subset V$.
\end{Theorem}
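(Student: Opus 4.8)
The plan is to recognize Theorem~\ref{hall} as a reformulation of the classical marriage theorem and to prove the two implications separately. The essential observation is that in a bipartite graph on $X\cup Y$ every subset of $X$ (and every subset of $Y$) is automatically an independent set, so the hypothesis about independent sets specializes to the ordinary Hall condition on each side, while its full strength is only needed to run the counting argument in the forward direction.

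For the forward implication, I would suppose that $G$ has a perfect matching $M$ and take an arbitrary independent set $T\subset V$. Since $M$ is perfect, each $t\in T$ has a unique matching partner $m(t)$, and distinct vertices receive distinct partners. Because $t$ and $m(t)$ are adjacent while $T$ is independent, $m(t)\notin T$; moreover $m(t)\in N(t)\subseteq N(T)$. Hence $t\mapsto m(t)$ is an injection $T\to N(T)$, which gives $|N(T)|\geq|T|$. This argument is uniform: it treats subsets of $X$, subsets of $Y$, and mixed independent sets in exactly the same way.

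For the backward implication, I would first restrict the hypothesis to the independent sets $T\subseteq X$, obtaining $|N(T)|\geq|T|$ for all $T\subseteq X$, and then invoke the classical Hall marriage theorem to produce a matching $M$ saturating $X$. It then remains to upgrade $M$ to a perfect matching, for which I would show $|X|=|Y|$. Applying the hypothesis to the independent sets $T=X$ and $T=Y$ gives $|N(X)|\geq|X|$ and $|N(Y)|\geq|Y|$; since $G$ is bipartite we always have $N(X)\subseteq Y$ and $N(Y)\subseteq X$, so $|X|\leq|N(X)|\leq|Y|$ and $|Y|\leq|N(Y)|\leq|X|$, whence $|X|=|Y|$. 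A matching saturating $X$ therefore uses $|X|=|Y|$ edges and covers all of $Y$ as well, so it is perfect.

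The only external input is the classical Hall theorem, which I would regard as known (or, if a self-contained proof were wanted, re-derive via augmenting paths or K\"onig's min-max theorem). The main point to handle carefully is the backward direction: one must separately establish $|X|=|Y|$ from the cases $T=X$ and $T=Y$ so that a matching saturating a single side is automatically perfect, rather than assuming the balance of the two parts at the outset.
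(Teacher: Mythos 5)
Your proof is correct. Note, however, that the paper does not prove this statement at all: it is quoted verbatim from Villarreal's book (Theorem 7.1.9 of \cite{V}) and used as a black box, so there is no internal argument to compare yours against. Your derivation is sound on both sides: the forward direction works for an arbitrary subset $T$ (independence is not even needed there, since the matching partners give an injection $T\to N(T)$), and in the backward direction you correctly isolate the one point that distinguishes this statement from the classical marriage theorem, namely that applying the hypothesis to the independent sets $T=X$ and $T=Y$ forces $|X|=|Y|$, so that a matching saturating $X$ obtained from Hall's theorem is automatically perfect. This is exactly the role the independence hypothesis plays (and it is how the paper uses the result later, in Corollary~\ref{perfectmatch}, where only the conditions on $T\subset X$ and $T\subset Y$ are ever checked).
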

Note that a subset of vertices of $G$ is called independent if no two of them are adjacent.

\begin{Corollary}\label{perfectmatch}
Let $\P$ be a convex polyomino on $[m]\times[n]$ and $G_{\P}$ its associated bipartite graph. Then $x_1\cdots x_my_1\cdots y_n\in \mathbb{K}[G_{\P}]$ if and only if $|N(T)|\geq |T|$ for every $T\subset X$ or $T\subset Y$.
\end{Corollary}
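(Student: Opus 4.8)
The plan is to reduce the statement to Villarreal's criterion (Theorem~\ref{hall}) together with the observation, recorded just before the corollary, that $x_1\cdots x_my_1\cdots y_n\in\mathbb{K}[G_{\P}]$ holds precisely when $G_{\P}$ admits a perfect matching. With these two facts in hand, it suffices to prove that for the bipartite graph $G_{\P}$ the Hall-type inequality $|N(T)|\geq|T|$ over \emph{all} independent subsets $T\subset X\cup Y$ is equivalent to the same inequality restricted to subsets lying entirely inside $X$ or entirely inside $Y$.

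The forward implication is immediate. Since $G_{\P}$ is bipartite, no two vertices of $X$ are adjacent and no two vertices of $Y$ are adjacent, so every $T\subset X$ and every $T\subset Y$ is an independent set; thus if the full condition of Theorem~\ref{hall} holds, it holds in particular for these one-sided subsets. For the converse, which is the substantive direction, I would begin with an arbitrary independent set $T\subset X\cup Y$ and decompose it as $T=T_X\cup T_Y$ with $T_X=T\cap X$ and $T_Y=T\cap Y$. The key structural point is that in a bipartite graph neighbourhoods land on the opposite side: $N(T_X)\subseteq Y$ while $N(T_Y)\subseteq X$, so the two sets $N(T_X)$ and $N(T_Y)$ are disjoint, and their union is exactly $N(T)$. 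Hence $|N(T)|=|N(T_X)|+|N(T_Y)|$, and applying the one-sided hypothesis separately to $T_X\subset X$ and $T_Y\subset Y$ yields $|N(T)|\geq|T_X|+|T_Y|=|T|$.

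Having verified the inequality for every independent $T$, Theorem~\ref{hall} produces a perfect matching of $G_{\P}$, and the preceding remark translates this back into $x_1\cdots x_my_1\cdots y_n\in\mathbb{K}[G_{\P}]$, which closes the equivalence.

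I do not expect a genuine obstacle of a computational kind here; the only point demanding care is the logical bookkeeping. One must notice that the independence of $T$ is exactly what licenses the appeal to Theorem~\ref{hall}, even though the decomposition inequality itself would hold verbatim for arbitrary subsets, and that the disjointness $N(T_X)\cap N(T_Y)=\emptyset$ forced by bipartiteness is the single fact carrying the argument.
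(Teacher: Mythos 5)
Your proposal is correct and follows essentially the same route as the paper's proof: both directions reduce to Theorem~\ref{hall}, with the forward direction using that one-sided subsets of a bipartite graph are independent, and the converse using the decomposition $T=T_X\cup T_Y$ together with the disjointness of $N(T_X)\subseteq Y$ and $N(T_Y)\subseteq X$ to get $|N(T)|=|N(T_X)|+|N(T_Y)|\geq |T|$. This matches the paper's argument in all essentials.
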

\begin{proof}
If $x_1\cdots x_my_1\cdots y_n\in \mathbb{K}[G_{\P}]$, then by Theorem~\ref{hall}, we obtain $|N(T)|\geq|T|$, for every independent subset of vertices $T\subset X\cup Y$. Notice that all subsets $T\subset X$ and $U\subset Y$ are independent.

Conversely, we suppose $|N(T)|\geq |T|$ for every $T\subset X$ or $T\subset Y$. Let \[T=\{x_{i_1},\ldots, x_{i_r},y_{j_1},\ldots, y_{j_s}\}\subset X\cup Y\] be an independent set of vertices with $r, s\geq 1$. Then \[|T|=r+s\leq |N(\{x_{i_1},\ldots, x_{i_r}\})|+|N(\{y_{j_1},\ldots, y_{j_s}\})|.\] Since $N(\{x_{i_1},\ldots, x_{i_r}\})\subset Y$ and $N(\{y_{j_1},\ldots, y_{j_s}\})\subset X$, we have \[|N(\{x_{i_1},\ldots, x_{i_r}\})|+|N(\{y_{j_1},\ldots, y_{j_s}\})|=\]\[=|N(\{x_{i_1},\ldots, x_{i_r}\})\cup N(\{y_{j_1},\ldots, y_{j_s}\})|=|N(T)|.\] Thus, $|T|\leq|N(T)|$ and according to Theorem~\ref{hall}, $x_1\cdots x_my_1\cdots y_n\in \mathbb{K}[G_{\P}]$.
\end{proof}
\begin{Remark}
If the bipartite graph $G$ on $X\cup Y$ has a perfect matching, then $m=n$. Indeed, $|X|\leq|N(X)|=|Y|$ and $|Y|\leq|N(Y)|=|X|$.
\end{Remark}
\begin{Definition}
Let $\P$ be a convex polyomino on $[m]\times [n]$ and $T\subset X$. The set $N_Y(T)=\{y\in Y\mid (x,y)\in V(\P)\text{ for some }x\in T\}$ is called a neighbor vertical interval if $N_Y(T)=\{y_a, y_{a+1}, \ldots, y_b\}$ with $a<b$ and for every $i\in \{a,a+1,\ldots, b-1\}$ there exists $x\in T$ such that $[(x,y_i), (x, y_{i+1})]$ is an edge in $\P$.
\end{Definition}
\begin{Remark}
If the subset $T=\{x\}\subset X$ has only one element, then $N_Y(T)$ is a neighbor vertical interval. Indeed, let $y_{i_1}, y_{i_2}\in N_Y(x)$ with $i_1<i_2$. Since $\P$ is a convex polyomino, there exists a monotone path between the cells containing $(x, y_{i_1})$ and $(x, y_{i_2})$ as corners. We display the possible monotone paths between two cells in Figure~\ref{fig4}. Then we have $[(x,y_{i_1}),(x,y_{i_2})]\subset \P$.
\begin{center}
  \begin{figure}
    \centering
      \begin{tikzpicture}[domain=0:12]
        \filldraw[fill=black!5!white, draw=black] (3,0) rectangle (3.5,0.5);
        \filldraw[fill=black!5!white, draw=black] (3,0.5) rectangle (3.5,1);
        \filldraw[fill=black!5!white, draw=black] (3,1) rectangle (3.5,1.5);
        \filldraw[fill=black!5!white, draw=black] (3,1.5) rectangle (3.5,2);
        \filldraw (3.5,0) circle (0pt) node[below] {$x$};
        \draw[-,black, dashed] (3,2) -- (2,2) -- (2,1.5) -- (1.5,1.5) -- (1.5,0.5) -- (2,0.5) -- (2,0) -- (3,0);
        \draw[-,black, dashed] (3.5,2) -- (4,2) -- (4,1.5) -- (4.5,1.5) -- (4.5,0.5) -- (4,0.5) -- (4,0) -- (3,0);
        \draw[-,black, dashed] (3.25,0.25) -- (3.25,1.75);
        \filldraw[fill=black!5!white, draw=black] (6,0) rectangle (6.5,0.5);
        \filldraw[fill=black!5!white, draw=black] (6,0.5) rectangle (6.5,1);
        \filldraw[fill=black!5!white, draw=black] (6,1) rectangle (6.5,1.5);
        \filldraw[fill=black!5!white, draw=black] (6,1.5) rectangle (6.5,2);
        \filldraw[fill=black!5!white, draw=black] (6.5,1) rectangle (7,1.5);
        \filldraw[fill=black!5!white, draw=black] (6.5,1.5) rectangle (7,2);
        \filldraw[fill=black!5!white, draw=black] (6.5,2) rectangle (7,2.5);
        \filldraw[fill=black!5!white, draw=black] (6.5,2.5) rectangle (7,3);
        \filldraw (6.5,0) circle (0pt) node[below] {$x$};
        \draw[-,black, dashed] (6,2) -- (5.5,2) -- (5.5,1.5) -- (5,1.5) -- (5,0.5) -- (5.5,0.5) -- (5.5,0) -- (6,0);
        \draw[-,black, dashed] (7,3) -- (7.5,3) -- (7.5,2.5) -- (8,2.5) -- (8,1.5) -- (7.5,1.5) -- (7.5,1) -- (7,1);
        \draw[-,black, dashed] (6.25,0.25) -- (6.25,1.25) -- (6.75,1.25) -- (6.75,2.75);
        \filldraw (3.5,0) circle (1.5pt);
        \filldraw (3.5,2) circle (1.5pt);
        \filldraw (6.5,0) circle (1.5pt);
        \filldraw (6.5,3) circle (1.5pt);
    \end{tikzpicture}
    \caption{Possible monotone paths}\label{fig4}
  \end{figure}
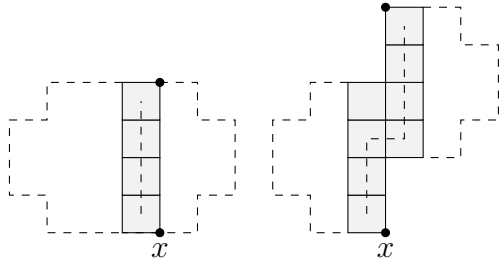
\end{center}
\end{Remark}
\begin{Example}
In the polyomino of Figure~\ref{vertical}, let $T_1=\{x_1, x_4\}$ and $T_2=\{x_1, x_2\}$. Then $N_Y(T_1)=\{y_1, y_2, y_3, y_4\}$ and $N_Y(T_2)=\{y_1, y_2, y_3, y_4\}$.
\begin{center}
\begin{figure}
  \begin{tikzpicture}[domain=0:12]
        \filldraw[fill=black!5!white, draw=black] (0,2) rectangle (1,3);
        \filldraw[fill=black!5!white, draw=black] (1,0) rectangle (2,1);
        \filldraw[fill=black!5!white, draw=black] (1,1) rectangle (2,2);
        \filldraw[fill=black!5!white, draw=black] (1,2) rectangle (2,3);
        \filldraw[fill=black!5!white, draw=black] (2,0) rectangle (3,1);
        \filldraw (4,3) circle (1pt) node[above] {$x_1$};
        \filldraw (7,3) circle (1pt) node[above] {$x_4$};
        \filldraw (4,1) circle (1pt) node[below] {$y_3$};
        \filldraw (5,1) circle (1pt) node[below] {$y_4$};
        \filldraw (6,1) circle (1pt) node[below] {$y_1$};
        \filldraw (7,1) circle (1pt) node[below] {$y_2$};
        \draw[-,black] (4,3) -- (4,1);
        \draw[-,black] (4,3) -- (5,1);
        \draw[-,black] (7,3) -- (6,1);
        \draw[-,black] (7,3) -- (7,1);
        \filldraw (6,0) node[left] {$G_{T_1\cup N(T_1)}$};
        \filldraw (9,3) circle (1pt) node[above] {$x_1$};
        \filldraw (10,3) circle (1pt) node[above] {$x_2$};
        \filldraw (9,1) circle (1pt) node[below] {$y_1$};
        \filldraw (10,1) circle (1pt) node[below] {$y_2$};
        \filldraw (11,1) circle (1pt) node[below] {$y_3$};
        \filldraw (12,1) circle (1pt) node[below] {$y_4$};
        \draw[-,black] (9,3) -- (11,1);
        \draw[-,black] (9,3) -- (12,1);
        \draw[-,black] (10,3) -- (9,1);
        \draw[-,black] (10,3) -- (10,1);
        \draw[-,black] (10,3) -- (11,1);
        \draw[-,black] (10,3) -- (12,1);
        \filldraw (11,0) node[left] {$G_{T_2\cup N(T_2)}$};
    \end{tikzpicture}
    \caption{}\label{vertical}
\end{figure}
\end{center}

We observe that $G_{T_1\cup N(T_1)}$ is not connected, while $G_{T_2\cup N(T_2)}$ is connected.
\end{Example}

\begin{Remark}
In the previous example, we notice that $N_Y(T_1)$ and $N_Y(T_2)$ coincide as sets, but $N_Y(T_2)$ is a neighbor vertical interval, while $N_Y(T_1)$ is not.
\end{Remark}

\begin{Lemma}\label{lemma1}
Let $\P$ be a convex polyomino and $G:=G_{\P}$ its associated bipartite graph. Then for each $\emptyset\neq T\subsetneq X$, the following conditions are equivalent:
\begin{enumerate}
  \item $N_Y(T)$ is a neighbor vertical interval.
  \item $G_{T\cup N(T)}$ is a connected graph.
\end{enumerate}
\end{Lemma}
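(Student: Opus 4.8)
The plan is to reduce the statement to an elementary fact about families of integer intervals, with the geometry of $\P$ entering only through the single-column case. For $T\subseteq X$ one has $N(T)=N_Y(T)\subseteq Y$, since all neighbours of $X$-vertices lie in $Y$; thus $G_{T\cup N(T)}$ is the bipartite graph on the parts $T$ and $N_Y(T)$ in which $x_p$ and $y_q$ are joined exactly when $(x_p,y_q)\in V(\P)$. For each $x_p\in T$ write $I_p:=N_Y(\{x_p\})$ for the set of neighbours of $x_p$. By the Remark treating a single vertex, convexity of $\P$ guarantees that $I_p$ is a contiguous vertical interval $\{y_{c_p},\dots,y_{d_p}\}$ and that the whole segment $[(x_p,y_{c_p}),(x_p,y_{d_p})]$ lies in $\P$, so that $y_i,y_{i+1}\in I_p$ already forces $[(x_p,y_i),(x_p,y_{i+1})]$ to be an edge of $\P$. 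Hence $N_Y(T)=\bigcup_{x_p\in T}I_p$, and being a neighbour vertical interval amounts to: (A) this union is a single interval $\{y_a,\dots,y_b\}$, and (B) every consecutive pair $y_i,y_{i+1}$ lies in one single column $I_p$ with $x_p\in T$. The requirement $a<b$ is automatic, since each $I_p$ already has at least two elements.

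For $(1)\Rightarrow(2)$ I would argue directly. If $N_Y(T)=\{y_a,\dots,y_b\}$ is a neighbour vertical interval, then for each $i$ there is $x^{(i)}\in T$ adjacent to both $y_i$ and $y_{i+1}$, so $y_a-x^{(a)}-y_{a+1}-\cdots-x^{(b-1)}-y_b$ is a walk in $G_{T\cup N(T)}$ placing all of $N_Y(T)$ in one component. Since every $x\in T$ is by definition adjacent to some vertex of $N_Y(T)$, the whole graph is connected.

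For $(2)\Rightarrow(1)$, the first step is to pass from $G_{T\cup N(T)}$ to the intersection graph $H$ on the columns $\{I_p:x_p\in T\}$, where $I_p\sim I_{p'}$ iff $I_p\cap I_{p'}\neq\emptyset$: since every $y\in N_Y(T)$ lies in at least one column, the connected components of $G_{T\cup N(T)}$ correspond bijectively to those of $H$, so $G_{T\cup N(T)}$ is connected iff $H$ is connected. The crux is then the elementary lemma that a finite family of integer intervals has connected intersection graph iff (A) and (B) hold. For the forward implication I would use that a chain of pairwise-overlapping intervals has contiguous union, which yields (A), and that if some consecutive pair failed (B) the columns would split into those lying left of the gap and those lying right of it, disconnecting $H$. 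For the converse direction I would run a sweep: letting $R$ be the union of the columns reachable from a fixed column containing $y_a$, show that $R$ is a contiguous initial segment $\{y_a,\dots,y_r\}$, and invoke (B) at the pair $(y_r,y_{r+1})$ to enlarge $R$ unless $r=b$.

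I expect the main obstacle to be this $(2)\Rightarrow(1)$ direction, and specifically establishing contiguity (A): connectivity of $G_{T\cup N(T)}$ must be converted into the statement that the columns leave no gap, which is exactly where one needs each column to be a genuine vertical interval, hence convexity of $\P$, rather than an arbitrary set of vertices. Once (A) is secured, (B) follows from the observation that a gap not bridged by any single column would disconnect the graph, so in practice the two conditions are proved together by the same sweep.
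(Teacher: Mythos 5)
Your proposal is correct, and for the easy direction it matches the paper: the zigzag walk you build from the bridging columns is exactly the path the paper constructs (the paper connects pairs of vertices of $T$, you connect the vertices of $N_Y(T)$ and then attach each $x\in T$; the mechanism is identical). The genuine difference is in $(2)\Rightarrow(1)$. The paper argues inside $G_{T\cup N(T)}$: for consecutive elements $y_{i_j},y_{i_{j+1}}$ of $N_Y(T)$ it picks a connecting path and runs a case analysis on where the intermediate $y$-vertices sit (all below $i_j$, all above $i_{j+1}$, or a consecutive pair straddling the gap), in each case producing a single $x_k\in T$ whose interval $N_Y(x_k)$ contains both, which simultaneously forces $i_{j+1}=i_j+1$ and yields the required edge. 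You instead pass to the intersection graph $H$ of the column intervals $I_p=N_Y(\{x_p\})$ and prove a standalone combinatorial lemma about families of integer intervals: connected intersection graph if and only if the union is contiguous and every consecutive pair is bridged by a single member. Your splitting argument for the bridging condition (if no column contains both $y_i$ and $y_{i+1}$, then every column lies entirely on one side of the gap, disconnecting $H$ and hence $G_{T\cup N(T)}$) replaces the paper's path-by-path case analysis and is cleaner, while your chain argument gives contiguity of the union. Both proofs rest on the same geometric input, namely the Remark that each single-column neighborhood $N_Y(\{x\})$ is a vertical interval all of whose unit segments are edges of $\P$ --- this is the only place convexity enters, as you correctly observe. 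What your route buys is a case-free argument that isolates the purely combinatorial content; what the paper's route buys is that it never leaves the original graph. The one small point you should make explicit is that every column $I_p$, $x_p\in T$, is nonempty (indeed has at least two elements, since every $x_p$ is a vertex of some cell of $\P$), which is what justifies both the bijection between components of $G_{T\cup N(T)}$ and of $H$, and your claim that the requirement $a<b$ in the definition is automatic.
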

\begin{proof}
For \emph{(1)$\Rightarrow $(2)}, it is sufficient to choose $x, z\in T$ and to find a path $d$ between them in $G_{T\cup N(T)}$.
Without loss of generality, we may choose $y_s\in N_Y(x)$ and $y_t\in N_Y(z)$ with $s<t$. Then by hypothesis, $\{y_{s},y_{s+1}, \ldots, y_{t}\}\subset N_Y(T)$ and there exist $x_{i_s}, x_{i_{s+1}}, \ldots, x_{i_{t-1}}\in T$ such that $[(x_{i_j},y_j),(x_{i_j}, y_{j+1})]$ is an edge in $\P$, for $j\in\{s,s+1, \ldots, t-1\}$. Thus, we have \[(x, y_{s}), (x_{i_s}, y_{s}), (x_{i_{s}}, y_{s+1}),(x_{i_{s+1}}, y_{s+1}), \dots, (x_{i_{t-1}}, y_{t-1}), (x_{i_{t-1}}, y_{t}) (z, y_{t})\in V(\P).\] So the path between $x$ and $z$ in $G_{\P}$ is \[d=\{\{x, y_{s}\}, \{y_{s}, x_{i_s}\}, \{x_{i_s}, y_{s+1}\},\{y_{s+1}, x_{i_{s+1}}\}, \ldots, \{x_{i_{t-1}}, y_{t}\}, \{y_{t}, z\}\}.\]

For \emph{(2)$\Rightarrow $(1)}, we consider $N_Y(T)=\{y_{i_1}, \dots, y_{i_s}\mid i_1<i_2<\cdots<i_s\}$ and we prove that for every $j\in[s-1]$, there exists $x_k\in T$ such that $$[(x_k,y_{i_j}),(x_k, y_{i_{j+1}})]$$ is an edge in $\P$. In particular, it also follows that $i_{j+1}=i_j+1$ for each $j\in[s-1]$, which will end the proof.

Let $j\in[s-1]$. Since $G_{T\cup N(T)}$ is a connected graph, there is a path between $y_{i_j}$ and $y_{i_{j+1}}$ in $G_{T\cup N(T)}$. In other words, there are $x_{k_1}, \ldots, x_{k_{r-1}}\in T$ and $y_{l_1}, \ldots, y_{l_{r-2}}\in N_Y(T)$ such that $$(x_{k_1}, y_{i_j}), (x_{k_1}, y_{l_1}), (x_{k_2}, y_{l_1}), (x_{k_2},y_{l_2}), \ldots, (x_{k_{r-1}}, y_{l_{r-2}}), (x_{k_{r-1}}, y_{i_{j+1}})\in V(\P).$$

If we have $a\in[r-2]$ such that $l_a<i_j<i_{j+1}<l_{a+1}$, then $$[(x_{k_{a+1}}, y_{i_j}), (x_{k_{a+1}}, y_{i_{j+1}})]$$ is an edge interval in $\P$ because $(x_{k_{a+1}}, y_{l_a}), (x_{k_{a+1}}, y_{l_{a+1}})\in V(\P)$ and $N_Y(x_{k_{a+1}})$ is a neighbor vertical interval. Moreover, $y_{i_j}, y_{i_j+1},\ldots, y_{i_{j+1}}\in N_Y(x_{k_{a+1}})\subset N_Y(T)$. Thus, $i_{j+1}=i_j+1$ and $[(x_{k_{a+1}}, y_{i_j}), (x_{k_{a+1}}, y_{i_{j+1}})]$ is an edge in $\P$.

If for all $a\in[r-2]$, $l_a<i_j<i_{j+1}$, then $$[(x_{k_{r-1}}, y_{i_j}), (x_{k_{r-1}}, y_{i_{j+1}})]$$  is an edge interval in $\P$, since $(x_{k_{r-1}}, y_{l_{r-2}}), (x_{k_{r-1}}, y_{i_{j+1}})\in V(\P)$ and $N_Y(x_{k_{r-1}})$ is a neighbor vertical interval. So $y_{i_j}, y_{i_j+1},\ldots, y_{i_{j+1}}\in N_Y(x_{k_{a+1}})\subset N_Y(T)$ and $[(x_{k_{r-1}}, y_{i_j}), (x_{k_{r-1}}, y_{i_{j+1}})]$ is an edge in $\P$. We proceed in a similar way in the case that for all $a\in[r-2]$ we have $i_j<i_{j+1}<l_a$.
\end{proof}

\begin{Definition}
Let $\P$ be a convex polyomino on $[m]\times [n]$ and $U\subset Y$. The set $N_X(U)=\{x\in X\mid (x,y)\in V(\P)\text{ for some }y\in U\}$ is called a neighbor horizontal interval if $N_X(U)=\{x_a, x_{a+1}, \ldots, x_b\}$ with $a<b$ and for every $i\in \{a,a+1,\ldots, b-1\}$ there exists $y\in U$ such that $[(x_i,y), (x_{i+1}, y)]$ is an edge in $\P$.
\end{Definition}

\begin{Remark}
In the case that the subset $U=\{y\}\subset Y$ has only one element, we consider $x_{i_1}, x_{i_2}\in N_X(U)$ with $i_1<i_2$. Since $\P$ is a convex polyomino, by Proposition~\ref{caract}, there is a monotone path between the cells containing $(x_{i_1},y)$ and $(x_{i_2},y)$ as corners. We display the possible monotone paths between two cells in Figure~\ref{fig5}. Then we have $[(x_{i_1}, y),(x_{i_2},y)]\subset \P$.

\begin{center}
  \begin{figure}
    \centering
      \begin{tikzpicture}[domain=0:12]
        \filldraw[fill=black!5!white, draw=black] (1,2) rectangle (1.5,2.5);
        \filldraw[fill=black!5!white, draw=black] (1.5,2) rectangle (2,2.5);
        \filldraw[fill=black!5!white, draw=black] (2,2) rectangle (2.5,2.5);
        \filldraw[fill=black!5!white, draw=black] (2.5,2) rectangle (3,2.5);
        \filldraw (1,2) circle (0pt) node[left] {$y$};
        \draw[-,black, dashed] (1,2) -- (1,1.5) -- (1.5,1.5) -- (1.5,1) -- (2.5,1) -- (2.5,1.5) -- (3,1.5) -- (3,2);
        \draw[-,black, dashed] (1,2.5) -- (1,3) -- (1.5,3) -- (1.5,3.5) -- (2.5,3.5) -- (2.5,3) -- (3,3) -- (3,2.5);
        \draw[-,black, dashed] (1.25,2.25) -- (2.75,2.25);
        \filldraw[fill=black!5!white, draw=black] (5,2) rectangle (5.5,2.5);
        \filldraw[fill=black!5!white, draw=black] (5.5,2) rectangle (6,2.5);
        \filldraw[fill=black!5!white, draw=black] (6,2) rectangle (6.5,2.5);
        \filldraw[fill=black!5!white, draw=black] (6.5,2) rectangle (7,2.5);
        \filldraw[fill=black!5!white, draw=black] (7,1.5) rectangle (7.5,2);
        \filldraw[fill=black!5!white, draw=black] (7.5,1.5) rectangle (8,2);
        \filldraw[fill=black!5!white, draw=black] (6,1.5) rectangle (6.5,2);
        \filldraw[fill=black!5!white, draw=black] (6.5,1.5) rectangle (7,2);
        \filldraw (5,2) circle (0pt) node[left] {$y$};
        \draw[-,black, dashed] (5,2.5) -- (5,3) -- (5.5,3) -- (5.5,3.5) -- (6.5,3.5) -- (6.5,3) -- (7,3) -- (7,2.5);
        \draw[-,black, dashed] (6,1.5) -- (6,1) -- (6.5,1) -- (6.5,0.5) -- (7.5,0.5) -- (7.5,1) -- (8,1) -- (8,1.5);
        \draw[-,black, dashed] (5.25,2.25) -- (6.75,2.25) -- (6.75,1.75) -- (7.75,1.75);
        \filldraw (1,2) circle (1.5pt);
        \filldraw (3,2) circle (1.5pt);
        \filldraw (5,2) circle (1.5pt);
        \filldraw (8,2) circle (1.5pt);
    \end{tikzpicture}
    \caption{Possible monotone paths}\label{fig5}
  \end{figure}
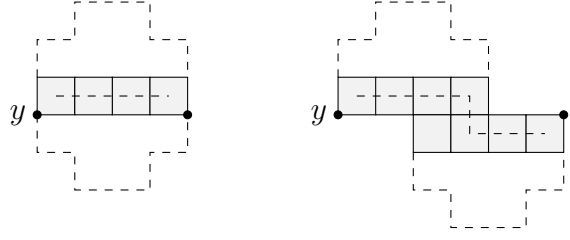
\end{center}
\end{Remark}
\begin{Example}
In the polyomino of Figure~\ref{fig6}, let $U_1=\{y_2, y_3\}$ and $U_2=\{y_1, y_5\}$. We observe that $N_X(U_1)=\{x_1, x_2, x_3, x_4, x_5\}$ is a neighbor horizontal interval, while $N_X(U_2)=\{x_1, x_2, x_3, x_4\}$ is not.

\begin{center}
\begin{figure}
  \begin{tikzpicture}[domain=0:12]
        \filldraw[fill=black!5!white, draw=black] (0,1) rectangle (1,2);
        \filldraw[fill=black!5!white, draw=black] (0,2) rectangle (1,3);
        \filldraw[fill=black!5!white, draw=black] (0,3) rectangle (1,4);
        \filldraw[fill=black!5!white, draw=black] (1,1) rectangle (2,2);
        \filldraw[fill=black!5!white, draw=black] (2,0) rectangle (3,1);
        \filldraw[fill=black!5!white, draw=black] (2,1) rectangle (3,2);
        \filldraw[fill=black!5!white, draw=black] (3,1) rectangle (4,2);
    \end{tikzpicture}
\caption{}\label{fig6}
\end{figure}
\end{center}
\end{Example}

\begin{Lemma}\label{lemma2}
If $\P$ is a convex polyomino, then for each $\emptyset\neq T\subset X$, $N_Y(x)\nsubseteq N_Y(T)\text{, }\forall\text{ }x\in X\setminus T$ if and only if $N_X(Y\setminus N_Y(T))=X\setminus T$.
\end{Lemma}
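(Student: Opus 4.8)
The plan is to reduce the stated equivalence to a single set-theoretic identity and then read off both implications from it. The key observation is that, directly from the definitions of $N_X$ and $N_Y$, a vertex $x\in X$ satisfies $x\in N_X(Y\setminus N_Y(T))$ exactly when there is some $y\in Y\setminus N_Y(T)$ with $(x,y)\in V(\P)$, i.e. exactly when $N_Y(x)\nsubseteq N_Y(T)$. So the first step is to record
\[
N_X(Y\setminus N_Y(T))=\{x\in X\mid N_Y(x)\nsubseteq N_Y(T)\}.
\]

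Next I would isolate the inclusion $N_X(Y\setminus N_Y(T))\subseteq X\setminus T$, which holds automatically: if $x\in T$ then $N_Y(x)\subseteq\bigcup_{z\in T}N_Y(z)=N_Y(T)$, so $x$ is excluded from the right-hand set above. Granting this, the equality $N_X(Y\setminus N_Y(T))=X\setminus T$ is equivalent to the reverse inclusion $X\setminus T\subseteq N_X(Y\setminus N_Y(T))$, which by the displayed identity says precisely that $N_Y(x)\nsubseteq N_Y(T)$ for every $x\in X\setminus T$ --- this being exactly the left-hand condition of the lemma. Thus both implications drop out simultaneously: for $(\Rightarrow)$ the hypothesis gives the reverse inclusion and hence equality, while for $(\Leftarrow)$ the equality lets me pick, for each $x\in X\setminus T$, a witness $y\in N_Y(x)\setminus N_Y(T)$.

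I do not expect a real obstacle here; the only point that needs care is verifying the automatic inclusion $N_X(Y\setminus N_Y(T))\subseteq X\setminus T$, after which the argument is a mechanical unwinding of the neighbor-set definitions. I would expect convexity of $\P$ to be unused in this lemma: it is a standing hypothesis, but the reasoning above relies only on the set-theoretic meaning of $N_X$ and $N_Y$, not on any path or interval property.
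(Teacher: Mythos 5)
Your proposal is correct and is essentially the paper's own argument: the paper likewise proves the inclusion $N_X(Y\setminus N_Y(T))\subset X\setminus T$ unconditionally (if $x\in T$, then any witness $y$ would lie in $N_Y(x)\subset N_Y(T)$, a contradiction) and obtains the reverse inclusion by directly unwinding the condition $N_Y(x)\nsubseteq N_Y(T)$ for $x\in X\setminus T$; your version merely packages the two implications into the single identity $N_X(Y\setminus N_Y(T))=\{x\in X\mid N_Y(x)\nsubseteq N_Y(T)\}$. Your closing observation is also accurate: the paper's proof, like yours, is purely set-theoretic and never invokes convexity of $\P$.
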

\begin{proof}
In the implication from left to right, we prove that $N_X(Y\setminus N_Y(T))=X\setminus T$. Let $x\in N_X(Y\setminus N_Y(T))$. Then there exists $y\in Y\setminus N_Y(T)$ such that $(x,y)\in V(\P)$. If $x\in T$, then $y\in N_Y(x)\subset N_Y(T)$, which is false. Thus, $x\in X\setminus T$ and $N_X(Y\setminus N_Y(T))\subset X\setminus T$.

If $x\in X\setminus T$, then by hypothesis, we obtain $N_Y(x)\nsubseteq N_Y(T)$. So there exists $y\in N_Y(x)\setminus N_Y(T)$. Hence, $y\in Y\setminus N_Y(T)$ and $x\in N_X(y)\subset N_X(Y\setminus N_Y(T))$. In other words,
$X\setminus T\subset N_X(Y\setminus N_Y(T))$.

Conversely, let $x\in X\setminus T$. Since $N_X(Y\setminus N_Y(T))=X\setminus T$, $x\in N_X(Y\setminus N_Y(T))$ and there exists $y\in Y\setminus N_Y(T)$ such that $(x,y)\in V(\P)$. This is equivalent to say that $y\in N_Y(x)\setminus N_Y(T)$ and $N_Y(x)\nsubseteq N_Y(T)$.
\end{proof}
\begin{Example}
In Figure~\ref{fig6}, let $T=\{x_4,x_5\}$. We observe that $N_X(Y\setminus N_Y(T))=N_X(\{y_4, y_5\})=\{x_1, x_2\}\neq \{x_1, x_2, x_3\}=X\setminus T$. On the other hand, $x_3\notin T$ and $N_Y(x_3)=N_Y(T)$.
\end{Example}

\begin{Lemma}\label{lemma3}
Let $\P$ be a convex polyomino and $G:=G_{\P}$ its associated bipartite graph. Then for each $\emptyset\neq T\subsetneq X$, the following conditions are equivalent:
\begin{enumerate}
  \item $N_X(Y\setminus N_Y(T))=X\setminus T$ is a neighbor horizontal interval.
  \item $G_{(X\cup Y)\setminus(T\cup N_Y(T))}$ is a connected graph with at least one edge.
\end{enumerate}
\end{Lemma}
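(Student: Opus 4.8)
The plan is to treat Lemma~\ref{lemma3} as the ``horizontal'' mirror of Lemma~\ref{lemma1}, transported to the complementary vertex set, with the set-theoretic equality in condition~(1) supplied by Lemma~\ref{lemma2}. Write $U:=Y\setminus N_Y(T)$; then the induced graph in condition~(2) is $G_{(X\cup Y)\setminus(T\cup N_Y(T))}=G_{(X\setminus T)\cup U}$, a bipartite graph all of whose edges join $X\setminus T$ to $U$. Two facts require no hypothesis. First, $N_X(U)\subseteq X\setminus T$: an $x\in N_X(U)\cap T$ would give some $y\in U$ with $(x,y)\in V(\P)$, whence $y\in N_Y(T)$, contradicting $y\in U$ (this is the easy inclusion from the proof of Lemma~\ref{lemma2}). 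Second, a vertex $x\in X\setminus T$ is isolated in $G_{(X\setminus T)\cup U}$ exactly when $N_Y(x)\cap U=\emptyset$, i.e.\ when $N_Y(x)\subseteq N_Y(T)$. Combining these with Lemma~\ref{lemma2} gives the bridge I will use: no vertex of $X\setminus T$ is isolated in $G_{(X\setminus T)\cup U}$ if and only if $N_X(U)=X\setminus T$.

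For (2)$\Rightarrow$(1): from connectedness together with the existence of an edge, the graph has at least two vertices and so admits no isolated vertex, whence $N_X(U)=X\setminus T$ by the bridge above. With this equality the vertex set $(X\setminus T)\cup U$ coincides with $U\cup N_X(U)$, so hypothesis~(2) asserts that $G_{U\cup N_X(U)}$ is connected. Running the proof of Lemma~\ref{lemma1} with the roles of $X$ and $Y$ (rows and columns) interchanged, applied to $U\subset Y$, then forces the indices of $N_X(U)$ to be consecutive and produces, for each pair of consecutive indices, a horizontal edge of $\P$ through a vertex of $U$; that is, $N_X(U)$ is a neighbor horizontal interval, which is condition~(1). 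This mirrored argument is legitimate because $U\neq\emptyset$ (an edge exists) and $U\subsetneq Y$ (since $T\neq\emptyset$ and $G_{\P}$, being connected, has no isolated vertex, so $N_Y(T)\neq\emptyset$).

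For (1)$\Rightarrow$(2): assume $N_X(U)=X\setminus T=\{x_a,\dots,x_b\}$ is a neighbor horizontal interval, so $a<b$. At least one edge is immediate, since the interval condition furnishes $y\in U$ with $(x_a,y)\in V(\P)$, giving the edge $\{x_a,y\}$ of the induced graph. Connectedness is the mirror of the (1)$\Rightarrow$(2) half of Lemma~\ref{lemma1}: for $x,z\in X\setminus T$ with $x\in N_X(y_s)$, $z\in N_X(y_t)$ and $s<t$, the interval property supplies vertices of $U$ and horizontal edges $[(x_i,y),(x_{i+1},y)]$ of $\P$ that splice into a path from $x$ to $z$ inside $G_{(X\setminus T)\cup U}$, exactly as vertical edges were spliced in Lemma~\ref{lemma1}. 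Finally, each $y\in U$ satisfies $y\notin N_Y(T)$, so all its neighbors lie in $X\setminus T$; hence no vertex of $U$ is isolated, and the induced graph is connected.

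The main obstacle is establishing the equality $N_X(U)=X\setminus T$ in (2)$\Rightarrow$(1): connectedness alone controls only the vertices incident to an edge, so the whole weight of the ``at least one edge'' clause is spent ruling out isolated vertices, after which Lemma~\ref{lemma2} upgrades ``no isolated vertex of $X\setminus T$'' to the required set equality. A secondary point of care is the degenerate case $|X\setminus T|=1$, where a neighbor horizontal interval cannot exist: condition~(1) then fails outright, while condition~(2) fails because a connected graph on $\{x\}\cup U$ having an edge would force some $y\in U$ to have $x$ as its only neighbor, contradicting the $2$-connectivity of $G_{\P}$. Hence the equivalence holds trivially in that case, and the mirrored Lemma~\ref{lemma1} is invoked only when $\emptyset\neq U\subsetneq Y$ and $|X\setminus T|\geq2$.
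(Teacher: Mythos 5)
Your proof is correct and takes essentially the same approach as the paper's: both use Lemma~\ref{lemma2} to convert the set equality $N_X(Y\setminus N_Y(T))=X\setminus T$ into the statement that no vertex of $X\setminus T$ is isolated in the induced graph, and both derive the interval property from connectivity via the convexity argument of Lemma~\ref{lemma1} (the paper rewrites that argument with the roles of $X$ and $Y$ exchanged, while you invoke it by row and column symmetry). Your explicit handling of the degenerate case $|X\setminus T|=1$ through $2$-connectivity is a minor addition that the paper leaves implicit.
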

\begin{proof}
Let $T$ be a subset in $X$ which satisfies the conditions given in \emph{(1)}.
By Lemma~\ref{lemma2} and the fact that $T\subsetneq X$, there is $x\in X\setminus T$ with $N_Y(x)\nsubseteq N_Y(T)$. In other words, there are $x\in X\setminus T$ and $y\in Y\setminus N_Y(T)$ such that $(x,y)\in V(\P)$. This is equivalent to saying that $\{x,y\}$ is an edge in $G_{(X\cup Y)\setminus(T\cup N_Y(T))}$.

For the connectivity of the graph $G_{(X\cup Y)\setminus(T\cup N_Y(T))}$, it is sufficient to choose $y,z\in Y\setminus N_Y(T)$ and to find a path between them in $G_{(X\cup Y)\setminus(T\cup N_Y(T))}$. Without loss of generality, we consider $x_s\in N_X(y)$ and $x_t\in N_X(z)$ with $s<t$. Since $N_X(Y\setminus N_Y(T))$ is a neighbor horizontal interval, $\{x_s, x_{s+1},\ldots, x_t\}\subset N_X(Y\setminus N_Y(T))$ and there exist $y_{i_s}, y_{i_{s+1}}, \ldots, y_{i_{t-1}}\in Y\setminus N_Y(T)$ such that $[(x_j, y_{i_j}),(x_{j+1}, y_{i_j})]$ is an edge in $\P$, for each $j\in \{s, s+1,\ldots, t-1\}$. It follows that \[(x_s, y), (x_s, y_{i_s}), (x_{s+1}, y_{i_s}), (x_{s+1}, y_{i_{s+1}}), \ldots, (x_{t-1}, y_{i_{t-1}}), (x_t, y_{i_{t-1}}), (x_t, z)\in V(\P).\] In other words, the path between $y$ and $z$ is \[d=\{\{y, x_s\},\{x_s, y_{i_s}\}, \{y_{i_s}, x_{s+1}\}, \{x_{s+1}, y_{i_{s+1}}\}, \ldots, \{y_{i_{t-1}}, x_t\}, \{x_t, z\}\}.\]

Conversely, we suppose that $X\setminus T \neq N_X(Y\setminus N_Y(T))$. By Lemma~\ref{lemma2}, there is $x\in X\setminus T$ with the property that $N_Y(x)\subset N_Y(T)$. So $x$ represents an isolated vertex in $G_{(X\cup Y)\setminus(T\cup N_Y(T))}$, which is false.

Now, we consider $N_X(Y\setminus N_Y(T))=\{x_{i_1}, \ldots, x_{i_s}\mid i_1<\cdots<i_s\}$ and we prove that for every $j\in[s-1]$ there exists $y_k\in Y\setminus N_Y(T)$ such that $[(x_{i_j}, y_k),(x_{i_{j+1}}, y_k)]$ is an edge in $\P$.

Let $j\in[s-1]$. Since $G_{(X\cup Y)\setminus(T\cup N_Y(T))}$ is a connected graph, there is a path between $x_{i_j}$ and $x_{i_{j+1}}$ in $G_{(X\cup Y)\setminus(T\cup N_Y(T))}$. Thus, there exist $x_{l_1}, \ldots, x_{l_{r-2}}\in N_X(Y\setminus N_Y(T))$ and $y_{k_1}, \ldots, y_{k_{r-1}}\in Y\setminus N_Y(T)$ such that \[(x_{i_j}, y_{k_1}), (x_{l_1}, y_{k_1}), (x_{l_1}, y_{k_2}), \ldots, (x_{l_{r-2}}, y_{k_{r-1}}), (x_{i_{j+1}}, y_{k_{r-1}})\in V(\P).\]

Now we argue similarly to the proof of Lemma~\ref{lemma1}.

If we find $a\in[r-2]$ such that $l_a<i_j<i_{j+1}<l_{a+1}$, then \[[(x_{i_j}, y_{k_{a+1}}),(x_{i_{j+1}}, y_{k_{a+1}})]\] is an edge interval in $\P$ because $(x_{l_a}, y_{k_{a+1}}), (x_{l_{a+1}}, y_{k_{a+1}})\in V(\P)$ and $N_X(y_{k_{a+1}})$ is a neighbor horizontal interval. Moreover $x_{i_j}, x_{i_j+1}, \ldots, x_{i_{j+1}}\in N_X(y_{k_{a+1}})\subset N_X(Y\setminus N_Y(T))$ and so, $i_{j+1}=i_j+1$ and $[(x_{i_j}, y_{k_{a+1}}),(x_{i_{j+1}}, y_{k_{a+1}})]$ is an edge in $\P$.

If for all $a\in[r-2]$, $i_j<i_{j+1}<l_a$, then $$[(x_{i_j}, y_{k_1}), (x_{i_{j+1}}, y_{k_1})]$$ is an edge interval in $\P$ because $(x_{i_j}, y_{k_1}),(x_{l_1}, y_{k_1})\in V(\P)$ and $N_X(y_{k_1})$ is a neighbor horizontal interval. Thus, $x_{i_j}, x_{i_j+1}, \ldots, x_{i_{j+1}}\in N_X(y_{k_1})\subset N_X(Y\setminus N_Y(T))$ and $[(x_{i_j}, y_{k_1}), (x_{i_{j+1}}, y_{k_1})]$ is an edge in $\P$. We proceed in a similar way in the case that for all $a\in[r-2]$ we have $l_a<i_j<i_{j+1}$.
\end{proof}

\begin{Example}
In Figure~\ref{fig6}, let $T_1=\{x_5\}$ and $T_2=\{x_1, x_2, x_3\}$. We observe that $G_{(X\cup Y)\setminus(T_1\cup N_Y(T_1))}$ is not connected because $N_X(Y\setminus N_Y(T_1))=X\setminus T_1$ is not a neighbor horizontal interval (Figure~\ref{fig7}). The graph $G_{(X\cup Y)\setminus(T_2\cup N_Y(T_2))}$ is represented by only two isolated vertex $x_4$ and $x_5$.

\begin{figure}
    \centering
      \begin{tikzpicture}[domain=0:12]
        \filldraw (0,3) circle (1pt) node[above] {$x_1$};
        \filldraw (1,3) circle (1pt) node[above] {$x_2$};
        \filldraw (2,3) circle (1pt) node[above] {$x_3$};
        \filldraw (3,3) circle (1pt) node[above] {$x_4$};
        \filldraw (3,1) circle (1pt) node[below] {$y_1$};
        \filldraw (0,1) circle (1pt) node[below] {$y_4$};
        \filldraw (1,1) circle (1pt) node[below] {$y_5$};
        \draw[-,black] (2,3) -- (3,1) -- (3,3);
        \draw[-,black] (0,3) -- (0,1) -- (1,3) -- (1,1) -- (0,3);
    \end{tikzpicture}
    \caption{$G_{(X\cup Y)\setminus(T_1\cup N_Y(T_1))}$}\label{fig7}
\end{figure}
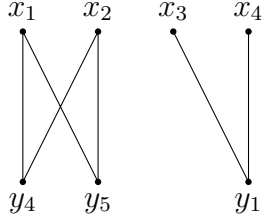
\end{Example}

Let $\P$ be a convex polyomino. Since the coordinate ring of $\P$ can be viewed as an edge ring of a bipartite graph, by applying Theorem~\ref{basic}, Corollary~\ref{perfectmatch}, Lemma~\ref{lemma1} and Lemma~\ref{lemma3} we get the following result.

\begin{Theorem}\label{mainresult}
Let $\P$ be a convex polyomino on $[m]\times[n]$ and $G:=G_{\P}$ its associated bipartite graph.

Then $\mathbb{K}[\P]$ is Gorenstein if and only if the following conditions are fulfilled:
\begin{enumerate}
\item $|U|\leq |N_X(U)|$, for every $U\subset Y$ and $|T|\leq|N_Y(T)|$ for every $T\subset X$;
\item For every $\emptyset\neq T\subsetneq X$ with the properties
    \begin{enumerate}
    \item $N_Y(T)$ is a neighbor vertical interval,
    \item $N_X(Y\setminus N_Y(T))=X\setminus T$ is a neighbor horizontal interval,
  \end{enumerate} one has $|N_Y(T)|=|T|+1$.
\end{enumerate}
\end{Theorem}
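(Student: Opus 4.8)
The plan is to read the statement off from the Ohsugi--Hibi criterion (Theorem~\ref{basic}) by rewriting each of its hypotheses, with the help of the preceding results, as a combinatorial condition on $\P$. Since $\mathbb{K}[\P]$ is identified with the edge ring $\mathbb{K}[G_{\P}]$ and $G_{\P}$ was shown above to be $2$-connected, Theorem~\ref{basic} applies verbatim to $G=G_{\P}$. Throughout I will use that for $T\subset X$ one has $N(T)=N_Y(T)$ and for $U\subset Y$ one has $N(U)=N_X(U)$, so that the abstract neighbourhood in $G$ coincides with the vertical and horizontal neighbourhoods introduced for $\P$.

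First I would dispose of the perfect-matching hypothesis. By Theorem~\ref{basic} the condition $x_1\cdots x_m y_1\cdots y_n\in\mathbb{K}[G_{\P}]$ is part of the Gorenstein criterion, and Corollary~\ref{perfectmatch} says precisely that this holds if and only if $|N(T)|\geq|T|$ for every $T\subset X$ and every $T\subset Y$. Rewritten in the $N_X,N_Y$ notation, this is exactly condition~(1) of the theorem.

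Next I would handle the numerical hypothesis of Theorem~\ref{basic}, that $|N(T)|=|T|+1$ for every $T\subset X$ for which $G_{T\cup N(T)}$ is connected and $G_{(X\cup Y)\setminus(T\cup N(T))}$ is connected with at least one edge. The first step is to observe that only proper nonempty subsets are relevant: if $T=\emptyset$ then $G_{T\cup N(T)}$ is the empty graph and hence not connected, while if $T=X$ then $N(T)=Y$ (as $G_{\P}$ is connected), so $G_{(X\cup Y)\setminus(T\cup N(T))}$ has no vertices and carries no edge; in either case the hypothesis is vacuous. For $\emptyset\neq T\subsetneq X$ I would invoke Lemma~\ref{lemma1} to replace the condition ``$G_{T\cup N(T)}$ is connected'' by the assertion in (2a) that $N_Y(T)$ is a neighbor vertical interval, and Lemma~\ref{lemma3} to replace ``$G_{(X\cup Y)\setminus(T\cup N(T))}$ is connected with at least one edge'' (using $N(T)=N_Y(T)$) by the assertion in (2b) that $N_X(Y\setminus N_Y(T))=X\setminus T$ is a neighbor horizontal interval. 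Since $N(T)=N_Y(T)$, the equality $|N(T)|=|T|+1$ is exactly $|N_Y(T)|=|T|+1$, and assembling these equivalences yields condition~(2).

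The genuine content has already been carried out in Lemmas~\ref{lemma1} and~\ref{lemma3} and in Corollary~\ref{perfectmatch}, so the theorem itself is essentially a faithful bookkeeping of the Ohsugi--Hibi criterion. The only place demanding care is the quantifier range in the second hypothesis: I must confirm that the set of $T$ over which Theorem~\ref{basic} quantifies is exactly the set of proper nonempty subsets of $X$ for which the two lemmas are stated, which is precisely where the boundary analysis of $T=\emptyset$ and $T=X$ (together with the convention that a connected graph has at least one vertex) is needed.
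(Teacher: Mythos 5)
Your proposal is correct and follows essentially the same route as the paper, which likewise obtains the theorem by combining the $2$-connectivity of $G_{\P}$ with Theorem~\ref{basic}, Corollary~\ref{perfectmatch}, Lemma~\ref{lemma1} and Lemma~\ref{lemma3}. In fact you are somewhat more careful than the paper, which states the result without spelling out the translation or the vacuity of the boundary cases $T=\emptyset$ and $T=X$ that justifies restricting the quantifier to $\emptyset\neq T\subsetneq X$.
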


\begin{Examples}
Let $\P_1$ be the polyomino of Figure~\ref{fig8}.
\begin{enumerate}
  \item Let $T=\{x_4, x_5, x_6\}$. $T$ satisfies properties $(a),(b)$. Since $|N_Y(T)|=3\neq 4=|T|+1$, $\P_1$ is not a Gorenstein polyomino.
  \item For $T=\{x_1, x_4, x_5, x_6\}$, only the property $(b)$ is fulfiled.
  \item For $T=\{x_4\}$, we have property $(a)$ and $N_X(Y\setminus N_Y(T))$ is a neighbor horizontal interval, but $X\setminus T\neq N_X(Y\setminus N_Y(T))$.
  \item For $T=\{x_6\}$, we have property $(a)$, but $N_X(Y\setminus N_Y(T))=X\setminus T$ is not a neighbor horizontal interval.
\end{enumerate}

\begin{figure}
  \centering
  \begin{tikzpicture}[domain=1:6]
        \filldraw[fill=black!5!white, draw=black] (1,4) rectangle (2,5);
        \filldraw[fill=black!5!white, draw=black] (2,2) rectangle (3,3);
        \filldraw[fill=black!5!white, draw=black] (2,3) rectangle (3,4);
        \filldraw[fill=black!5!white, draw=black] (2,4) rectangle (3,5);
        \filldraw[fill=black!5!white, draw=black] (2,5) rectangle (3,6);
        \filldraw[fill=black!5!white, draw=black] (3,2) rectangle (4,3);
        \filldraw[fill=black!5!white, draw=black] (4,2) rectangle (5,3);
        \filldraw[fill=black!5!white, draw=black] (4,1) rectangle (5,2);
        \filldraw[fill=black!5!white, draw=black] (5,2) rectangle (6,3);
\end{tikzpicture}
\caption{}\label{fig8}
\end{figure}

The polyomino $\P_2$ of Figure~\ref{fig9} is Gorenstein, because $x_1y_1\cdot x_2y_2\cdot x_3y_4\cdot x_4y_3\in \mathbb{K}[\P_2]$ and for each $T$ which satisfies the properties $(a),(b)$, one has $|N_Y(T)|=|T|+1$. In this case, we need to check the conditions of the Theorem only for two sets:
\begin{enumerate}
  \item $T=\{x_4\}$ with $N_Y(T)=\{y_2, y_3\}$;
  \item $T=\{x_1, x_4\}$ with $N_Y(T)=\{y_1, y_2, y_3\}$.
\end{enumerate}
\begin{figure}
  \centering
  \begin{tikzpicture}[domain=1:4]
        \filldraw[fill=black!5!white, draw=black] (1,1) rectangle (2,2);
        \filldraw[fill=black!5!white, draw=black] (1,2) rectangle (2,3);
        \filldraw[fill=black!5!white, draw=black] (2,1) rectangle (3,2);
        \filldraw[fill=black!5!white, draw=black] (2,2) rectangle (3,3);
        \filldraw[fill=black!5!white, draw=black] (2,3) rectangle (3,4);
        \filldraw[fill=black!5!white, draw=black] (3,2) rectangle (4,3);
\end{tikzpicture}
  \caption{}\label{fig9}
\end{figure}
\end{Examples}

\section{Gorenstein stack polyominoes}

We consider $\P$ to be a polyomino and we may assume that $[(1,1), (m,n)]$ is the smallest interval containing $V(\P)$. Then $\P$ is called a stack polyomino (Figure~\ref{stack}), if it is a convex polyomino and for $i\in [m-1]$, the cells $[(i, 1), (i+1, 2)]$ belong to $\P$.

\begin{figure}
  \centering
  \begin{tikzpicture}[domain=0:9]
        \filldraw[fill=black!5!white, draw=black] (0,0) rectangle (1,1) ;
        \filldraw[fill=black!5!white, draw=black] (1,0) rectangle (2,1) ;
        \filldraw[fill=black!5!white, draw=black] (2,0) rectangle (3,1) ;
        \filldraw[fill=black!5!white, draw=black] (0,1) rectangle (1,2) ;
        \filldraw[fill=black!5!white, draw=black] (1,1) rectangle (2,2) ;
        \filldraw[fill=black!5!white, draw=black] (1,2) rectangle (2,3) ;
        \filldraw[fill=black!5!white, draw=black] (5,0) rectangle (6,1) ;
        \filldraw[fill=black!5!white, draw=black] (6,0) rectangle (7,1) ;
        \filldraw[fill=black!5!white, draw=black] (7,0) rectangle (8,1) ;
        \filldraw[fill=black!5!white, draw=black] (8,0) rectangle (9,1) ;
        \filldraw[fill=black!5!white, draw=black] (5,1) rectangle (6,2) ;
  \end{tikzpicture}
  \caption{Stack polyominoes}\label{stack}
\end{figure}
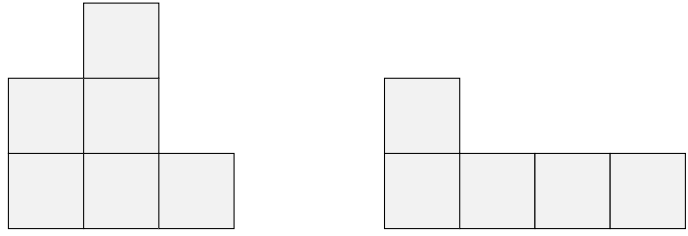

\begin{Remark}\label{rem}
If $\P$ is a stack polyomino, then for every $x\in X$ we have $\{y_1, y_2\}\subset N_Y(x)$.

Let $T\neq \emptyset$ be a subset in $X$ and $y_j\in N_Y(T)\setminus \{y_1, y_2\}$. So there exists $x_k\in T$ such that $y_j\in N_Y(x_k)$. Since $N_Y(x_k)$ is a neighbor vertical interval, \[\{y_1, y_2, \ldots, y_{j-1}, y_j\}\subset N_Y(x_k)\subset N_Y(T).\] In other words, $N_Y(T)=\{y_1, y_2, \ldots, y_s\}$ is a neighbor vertical interval for all $\emptyset\neq T\subsetneq X$, where $s=\max\{i\in [n]\mid (x_k, y_i)\in V(\P)\text{ for some }x_k\in T\}$.
\end{Remark}
\begin{Lemma}\label{lemma0}
Let $\P$ be a stack polyomino on $[n]\times [n]$. If $x_1\cdots x_ny_1\cdots y_n\notin\mathbb{K}[\P]$ then there is a subset $T\subset X$ for which the following conditions hold:
  \begin{enumerate}
    \item $Y\setminus N_Y(T)\neq \emptyset$ and
    \item  $(\forall)\text{ }x\in X\setminus T$, $\max\{j\in [n]\mid y_j\in N_Y(x)\}>\max\{j\in [n]\mid y_j\in N_Y(T)\}$.
  \end{enumerate}
\end{Lemma}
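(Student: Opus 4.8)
The plan is to interpret the hypothesis through Remark~\ref{rem}, reducing the whole statement to a claim about the heights of the columns. For $x\in X$ set $h(x)=\max\{j\in[n]\mid y_j\in N_Y(x)\}$. Since $\{y_1,y_2\}\subseteq N_Y(x)$ and $N_Y(x)$ is a neighbor vertical interval, Remark~\ref{rem} gives $N_Y(x)=\{y_1,\dots,y_{h(x)}\}$ and, more generally, $N_Y(T)=\{y_1,\dots,y_s\}$ with $s=\max_{x\in T}h(x)$ for every $\emptyset\neq T\subseteq X$. Hence $\max\{j\mid y_j\in N_Y(T)\}=\max_{x\in T}h(x)$, so condition (1) for a candidate $T$ is exactly $\max_{x\in T}h(x)<n$, and condition (2) reads $h(x)>\max_{x'\in T}h(x')$ for all $x\in X\setminus T$. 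In this language the whole lemma amounts to finding a height $s<n$ and taking $T=\{x\in X\mid h(x)\le s\}$, and the only real content is that some column has height strictly less than $n$.

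First I would show that not all columns can have height $n$. Because $\P$ lies on $[n]\times[n]$ there is a vertex with second coordinate $n$, so $\max_{x\in X}h(x)=n$. If instead $h(x)=n$ held for \emph{every} $x\in X$, then $N_Y(x)=Y$ for all $x$, forcing $V(\P)=[n]\times[n]$ and hence $G_{\P}=K_{n,n}$; this graph has the perfect matching $\{\{x_i,y_i\}\mid i\in[n]\}$, so by the observation following Theorem~\ref{basic} we would get $x_1\cdots x_ny_1\cdots y_n\in\mathbb{K}[\P]$, contradicting the hypothesis. (One may instead unwind Corollary~\ref{perfectmatch}: a Hall-violating $T_0\subseteq X$ satisfies $\max_{x\in T_0}h(x)<|T_0|\le n$, so its columns are short, while a Hall-violating $U_0\subseteq Y$ forces $N_X(U_0)\subsetneq X$, leaving a column $x^*$ with $U_0\cap N_Y(x^*)=\emptyset$ and hence $h(x^*)<n$.)

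It then remains to exhibit $T$. I would put $s=\min_{x\in X}h(x)$, which satisfies $s<n$ by the previous step, and set $T=\{x\in X\mid h(x)=s\}\neq\emptyset$. Then $\max_{x\in T}h(x)=s$, so $N_Y(T)=\{y_1,\dots,y_s\}$ and $Y\setminus N_Y(T)=\{y_{s+1},\dots,y_n\}\neq\emptyset$, which is (1); and every $x\in X\setminus T$ has $h(x)\neq s$, hence $h(x)>s=\max_{x'\in T}h(x')$, which is (2). Since $\max_{x\in X}h(x)=n>s$, this $T$ is moreover a proper subset of $X$. The step I expect to require the most care is the passage from the purely algebraic hypothesis $x_1\cdots x_ny_1\cdots y_n\notin\mathbb{K}[\P]$ to the concrete combinatorial statement that a short column exists; this is exactly where the complete-bipartite observation (or the analysis of which side of Hall's condition fails) is needed, and everything after it is a routine verification.
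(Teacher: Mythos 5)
Your argument is correct for the lemma as literally stated, but it takes a genuinely different route from the paper's, and the difference has consequences. The paper proves the lemma by unwinding Corollary~\ref{perfectmatch}: the failure of the perfect matching produces a Hall-violating set, either $I\subset X$ with $|I|>|N_Y(I)|$, in which case it sets $T=I\cup\{x\in X\mid N_Y(x)\subset N_Y(I)\}$, or $J\subset Y$ with $|J|>|N_X(J)|$, in which case it sets $T=X\setminus N_X(J)$; conditions (1) and (2) are then checked using Remark~\ref{rem} and Lemma~\ref{lemma2}. You instead reduce everything to column heights via Remark~\ref{rem}, observe that the hypothesis forbids all columns having height $n$ (else $G_{\P}=K_{n,n}$ possesses a perfect matching), and take $T$ to be the set of columns of minimal height. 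This is shorter and perfectly valid, and it exposes that the statement, read literally, asks for very little. But what the paper's construction buys --- and yours does not --- is the additional inequality $|T|>|N_Y(T)|$: in the first case $|T|\geq|I|>|N_Y(I)|=|N_Y(T)|$, and in the second case $J\subset Y\setminus N_Y(T)$ gives $|N_Y(T)|\leq n-|J|<n-|N_X(J)|=|T|$. Although this inequality is absent from the statement of Lemma~\ref{lemma0}, it is precisely what the proof of Corollary~\ref{cor1} cites from it (``there exists $\emptyset\neq T\subsetneq X$ such that $|T|>|N_Y(T)|$, \dots''), and it is what yields the contradiction there. Your $T$ need not satisfy it: for the stack polyomino on $[6]\times[6]$ whose columns have heights $2,3,3,3,6,6$, the perfect matching fails (the four columns of height at most $3$ see only $y_1,y_2,y_3$), yet your $T$ consists of the single column of height $2$, so $|T|=1<2=|N_Y(T)|$. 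Hence your proof establishes the lemma as written but could not be substituted for the paper's proof without breaking the deduction of Corollary~\ref{cor1}; what your analysis really shows is that the lemma ought to be stated with the conclusion $|T|>|N_Y(T)|$ included, a strengthening that the paper's construction supplies and yours, by design, cannot.
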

\begin{proof}

We suppose that $x_1\cdots x_ny_1\cdots y_n\notin\mathbb{K}[\P]$. By Corollary~\ref{perfectmatch}, we find $I\subset X$ with $|I|>|N_Y(I)|$ or $J\subset Y$ with $|J|>|N_X(J)|$.

In the case that $I\subset X$ and $|I|>|N_Y(I)|$, we consider $$T=I\cup \{x\in X\mid N_Y(x)\subset N_Y(I)\}.$$ We check conditions \emph{(1)} and \emph{(2)} for the set $T$. Since $\P$ is a stack polyomino, $N_Y(T)=\{y_1, y_2,\ldots, y_s\}$ for some $s\leq n$. If $N_Y(T)=Y$, then $|N_Y(T)|=|Y|=n\geq|I|$, which is false. So, $Y\setminus N_Y(T)\neq \emptyset$. Let $x\in X\setminus T$. It follows that $N_Y(x)\nsubseteq N_Y(I)=N_Y(T)$. Thus, there is $l>s$ such that $y_l\in N_Y(x)\setminus N_Y(T)$ and condition \emph{(2)} holds.

If there exists $J\subset Y$ with $|J|>|N_X(J)|$, then we set $$T=X\setminus N_X(J).$$ We check conditions \emph{(1)} and \emph{(2)} for the set $T$. For the proof of the first condition, it is sufficient to show that $J\subset Y\setminus N_Y(T)$. Let $y\in J$. In the case that $y\in N_Y(T)$, there is $x\in T\cap N_X(y)$. Since $y\in J$, we get $x\in N_X(y)\subset N_X(J)=X\setminus T$, which is impossible. Thus, $\emptyset\neq J\subset Y\setminus N_Y(T)$. It follows that $X\setminus T=N_X(J)\subset N_X(Y\setminus N_Y(T))$. For each $y\in Y\setminus N_Y(T)$, we have $N_X(y)\cap T=\emptyset$. Consequently, $N_X(y)\subset X\setminus T$ and $N_X(Y\setminus N_Y(T))=\cup_{y\in Y\setminus N_Y(T)}N_X(y)\subset X\setminus T$. In other words, $X\setminus T= N_X(Y\setminus N_Y(T))$. By Lemma~\ref{lemma2} and the previous remark for any $x\in X\setminus T$, $N_Y(x)\nsubseteq N(T)$ and we have the second condition.
\end{proof}

%\begin{Examples}
%In the first stack polyomino of Figure~\ref{fig10}, $x_1\cdots x_ny_1\cdots y_n\notin\mathbb{K}[\P]$, while in the second $x_1y_4\cdot x_2y_3\cdot x_3y_2\cdot x_4y_1\in \mathbb{K}[\P]$.
%\begin{figure}
 % \centering
  %\begin{tikzpicture}[domain=1:10]
   %     \filldraw[fill=black!5!white, draw=black] (1,1) rectangle (2,2);
    %    \filldraw[fill=black!5!white, draw=black] (1,2) rectangle (2,3);
     %   \filldraw[fill=black!5!white, draw=black] (1,3) rectangle (2,4);
      %  \filldraw[fill=black!5!white, draw=black] (1,4) rectangle (2,5);
       % \filldraw[fill=black!5!white, draw=black] (2,1) rectangle (3,2);
       % \filldraw[fill=black!5!white, draw=black] (3,1) rectangle (4,2);
       % \filldraw[fill=black!5!white, draw=black] (4,1) rectangle (5,2);
        %\filldraw[fill=black!5!white, draw=black] (7,1) rectangle (8,2);
        %\filldraw[fill=black!5!white, draw=black] (7,2) rectangle (8,3);
        %\filldraw[fill=black!5!white, draw=black] (7,3) rectangle (8,4);
        %\filldraw[fill=black!5!white, draw=black] (8,1) rectangle (9,2);
        %\filldraw[fill=black!5!white, draw=black] (9,1) rectangle (10,2);
        %\filldraw[fill=black, draw=black] (7,4) circle (2pt);
        %\filldraw[fill=black, draw=black] (8,3) circle (2pt);
        %\filldraw[fill=black, draw=black] (9,2) circle (2pt);
        %\filldraw[fill=black, draw=black] (10,1) circle (2pt);
%\end{tikzpicture}
%  \caption{}\label{fig10}
%\end{figure}
%\end{Examples}
As a consequence of Theorem~\ref{mainresult}, we may recover the characterisation of Gorenstein stack polyominoes which was obtained by Qureshi in \cite{Q}.
\begin{Corollary}\label{cor1}
Let $\P$ denote a stack polyomino on $[m]\times [n]$. The following conditions are equivalent:
  \begin{enumerate}
    \item $\mathbb{K}[\P]$ is Gorenstein.
    \item $m=n$ and for every $T\subset X$ with the properties that $Y\setminus N_Y(T)\neq \emptyset$ and $(\forall)\text{ }x\in X\setminus T$, $\max\{j\in [n]\mid y_j\in N_Y(x)\}>\max\{j\in [n]\mid y_j\in N_Y(T)\}$, one has $|N_Y(T)|=|T|+1$.
  \end{enumerate}
\end{Corollary}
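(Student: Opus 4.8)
The plan is to read the Corollary off Theorem~\ref{mainresult} by simplifying that criterion with the special shape of a stack polyomino. By Remark~\ref{rem}, for a stack polyomino each $N_Y(x_i)=\{y_1,\dots,y_{h_i}\}$ is a vertical interval based at $y_1$, where I write $h_i:=\max\{j:y_j\in N_Y(x_i)\}$, and for every $\emptyset\ne T\subsetneq X$ the set $N_Y(T)=\{y_1,\dots,y_s\}$ with $s=\max_{x_i\in T}h_i$ is automatically a neighbour vertical interval. Hence condition \emph{(a)} of Theorem~\ref{mainresult} always holds, so only \emph{(b)} carries information. I would also dispose of the factor $m=n$ at once: condition (1) of Theorem~\ref{mainresult} is, by Corollary~\ref{perfectmatch}, the existence of a perfect matching of $G_{\P}$, and the Remark following Corollary~\ref{perfectmatch} shows that a perfect matching forces $m=n$. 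This already matches condition (1) of Theorem~\ref{mainresult} with the two Hall-type inequalities and the requirement $m=n$ in the Corollary.

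Next I would show that the subsets $T$ over which the two statements quantify coincide. For $\emptyset\ne T\subsetneq X$ with $Y\setminus N_Y(T)\ne\emptyset$, Lemma~\ref{lemma2} says the equality $N_X(Y\setminus N_Y(T))=X\setminus T$ in \emph{(b)} is equivalent to $N_Y(x)\nsubseteq N_Y(T)$ for all $x\in X\setminus T$, which for a stack polyomino reads $\max\{j:y_j\in N_Y(x)\}>s=\max\{j:y_j\in N_Y(T)\}$, i.e. exactly the max-condition of the Corollary. The crucial point is then that, for a stack polyomino, this equality already makes $N_X(Y\setminus N_Y(T))=X\setminus T$ a \emph{neighbour horizontal interval}. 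Indeed, with $s<n$ one has $X\setminus T=\{i:h_i>s\}=\{i:h_i\ge s+1\}=N_X(\{y_{s+1}\})$, and since $y_{s+1}\in Y\setminus N_Y(T)$, the singleton case of the horizontal interval (Figure~\ref{fig5}) shows $N_X(\{y_{s+1}\})$ is a neighbour horizontal interval, the required edges being witnessed by $y_{s+1}$; this needs at least two elements, which holds because the maximal height $n$ is attained on the top row of cells, hence on at least two adjacent columns, all of height $n>s$. Consequently the $T$'s satisfying \emph{(a)},\emph{(b)} of Theorem~\ref{mainresult} are precisely the $T$'s quantified in the Corollary.

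With this identification the equivalence is quick. If $\KK[\P]$ is Gorenstein then $m=n$, and Theorem~\ref{mainresult}(2) gives $|N_Y(T)|=|T|+1$ for every $T$ in the common family, which is the Corollary's (2). Conversely, assume the Corollary's (2). Condition (2) of the theorem is then immediate from the identification of the families. For condition (1), equivalently the existence of a perfect matching, I would argue by contradiction: if $G_{\P}$ had no perfect matching then, since $m=n$, Lemma~\ref{lemma0} produces a $T$ satisfying exactly the Corollary's two properties; but by its construction from a violated Hall inequality ($|I|>|N_Y(I)|$ or $|J|>|N_X(J)|$) this $T$ has $|N_Y(T)|<|T|$, in particular $|N_Y(T)|\ne|T|+1$, contradicting the Corollary's (2). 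Hence a perfect matching exists and $\KK[\P]$ is Gorenstein.

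I expect the only genuine obstacle to be the geometric claim of the second paragraph, which is where the stack hypothesis is really used beyond Remark~\ref{rem}; it becomes clean once one recognizes $X\setminus T=N_X(\{y_{s+1}\})$ and invokes the singleton fact. The remaining care is with the boundary cases, which must be excluded from the Corollary's quantifier: $T=X$ gives $Y\setminus N_Y(T)=\emptyset$, so it is not admissible, while $T=\emptyset$ is excluded exactly as in the range $\emptyset\ne T\subsetneq X$ of Theorem~\ref{mainresult}. Everything else is a direct translation through Remark~\ref{rem}, Lemma~\ref{lemma2}, Corollary~\ref{perfectmatch} and Lemma~\ref{lemma0}.
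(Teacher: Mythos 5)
Your proposal is correct and follows essentially the same route as the paper's proof: it reduces the statement to Theorem~\ref{mainresult} via Remark~\ref{rem}, Lemma~\ref{lemma2} (with $N_X(y_{s+1})$ witnessing the neighbor horizontal interval), Corollary~\ref{perfectmatch} for $m=n$, and Lemma~\ref{lemma0} (with the inequality $|T|>|N_Y(T)|$ extracted from its construction) for the converse. Your extra care about the interval needing at least two elements is a minor refinement the paper leaves implicit, not a different approach.
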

\begin{proof}
For \emph{(1)}$\Rightarrow$\emph{(2)}, let $T\neq\emptyset$ be a subset in $X$ such that $Y\setminus N_Y(T)\neq \emptyset$ and $\max\{j\in [n]\mid y_j\in N_Y(x)\}>\max\{j\in [n]\mid y_j\in N_Y(T)\}$, for every $x\in X\setminus T$. By Remark~\ref{rem}, $N_Y(T)$ is a neighbor vertical interval.

By Lemma~\ref{lemma2}, we have $X\setminus T=N_X(Y\setminus N_Y(T))$, since $N_Y(T)=\{y_1,y_2, \ldots, y_s\}$ and $N_Y(x)=\{y_1,y_2,\ldots, y_t\}$ with $t>s$, for every $x\in X\setminus T$. Moreover $Y\setminus N_Y(T)=\{y_{s+1}, y_{s+2}, \ldots, y_n\}\neq\emptyset$ and $y_{s+1}\in N_Y(x)$, $\forall$ $x\in X\setminus T$. So $N_X(Y\setminus N_Y(T))=X\setminus T$ is a neighbor horizontal interval because $N_X(y_{s+1})$ is also a neighbor horizontal interval. By using Theorem~\ref{mainresult} and Corollary~\ref{perfectmatch}, $|N_Y(T)|=|T|+1$ and $x_1\cdots x_my_1\cdots y_n\in \mathbb{K}[\P]$. Thus, we also obtain $m=n$.

For \emph{(2)}$\Rightarrow$\emph{(1)}, we suppose that $m=n$ and $x_1\cdots x_my_1\cdots y_n\notin \mathbb{K}[\P]$. 

By Lemma~\ref{lemma0}, there exists $\emptyset\neq T\subsetneq X$ such that $|T|>|N_Y(T)|$, $Y\setminus N_Y(T)\neq \emptyset$ and $\max\{j\in [n]\mid y_j\in N_Y(x)\}>\max\{j\in [n]\mid y_j\in N_Y(T)\}$, for every $x\in X\setminus T$, which is false. Thus, $x_1\cdots x_ny_1\cdots y_n\in \mathbb{K}[\P]$ and we obtain the first condition of Theorem~\ref{mainresult} by applying Corollary~\ref{perfectmatch}.

Let $\emptyset\neq T\subsetneq X$ such that $N_Y(T)$ is a neighbor vertical interval and $N_X(Y\setminus N_Y(T))=X\setminus T$ is a neighbor horizontal interval. Since $T\subsetneq X$, there exists $x\in X\setminus T$ with $N_Y(x)\nsubseteq N_Y(T)$, by Lemma~\ref{lemma2}. It follows that we find $y\in N_Y(x)\setminus N_Y(T)\subset Y\setminus N_Y(T)$. In other words, $Y\setminus N_Y(T)\neq \emptyset$.

If $x\in X\setminus T$, then $\max\{j\in [n]\mid y_j\in N_Y(x)\}>\max\{j\in [n]\mid y_j\in N_Y(T)\}$ by Lemma~\ref{lemma2} and Remark~\ref{rem}. It implies that $|N_Y(T)|=|T|+1$ and the second condition of Theorem~\ref{mainresult} is fulfilled. Hence, $\mathbb{K}[\P]$ is Gorenstein.
\end{proof}

\begin{Definition}
Let $\P$ be a convex polyomino. A vertex $a\in V(\P)$ is called an interior vertex of $\P$, if $a$ is a vertex of four distinct cells of $\P$. We denote by $\i(\P)$ the set of all interior vertices of $\P$. The set $\partial\P=V(\P)\setminus \i(\P)$ is called the boundary of $\P$. We say that the vertex $a\in \partial\P$ is an inside (outside) corner of $\P$ if it belongs to exactly three (one) different cells of $\P$. (Figure~\ref{fig11})

\begin{figure}
  \centering
  \begin{tikzpicture}[domain=0:5]
        \filldraw[fill=black!5!white, draw=black] (0,0) rectangle (1,1) ;
        \filldraw[fill=black!5!white, draw=black] (1,0) rectangle (2,1) ;
        \filldraw[fill=black!5!white, draw=black] (2,0) rectangle (3,1) ;
        \filldraw[fill=black!5!white, draw=black] (3,0) rectangle (4,1) ;
        \filldraw[fill=black!5!white, draw=black] (4,0) rectangle (5,1) ;
        \filldraw[fill=black!5!white, draw=black] (0,1) rectangle (1,2) ;
        \filldraw[fill=black!5!white, draw=black] (1,1) rectangle (2,2) ;
        \filldraw[fill=black!5!white, draw=black] (2,1) rectangle (3,2) ;
        \filldraw[fill=black!5!white, draw=black] (3,1) rectangle (4,2) ;
        \filldraw[fill=black!5!white, draw=black] (1,2) rectangle (2,3) ;
        \filldraw[fill=black!5!white, draw=black] (2,2) rectangle (3,3) ;
        \filldraw[fill=black!5!white, draw=black] (1,3) rectangle (2,4) ;
        \filldraw[fill=black!5!white, draw=black] (2,3) rectangle (3,4) ;
        \filldraw[fill=black!5!white, draw=black] (2,4) rectangle (3,5) ;
        \filldraw (1,2) circle (2pt);
        \filldraw (2,4) circle (2pt);
        \filldraw (3,2) circle (2pt);
        \filldraw (4,1) circle (2pt);
  \end{tikzpicture}
  \caption{Inside corners}\label{fig11}
\end{figure}
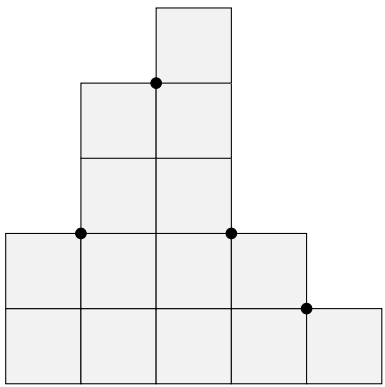
\end{Definition}

We may reformulate Corollary~\ref{cor1} as follows.

\begin{Corollary}\label{cor 2}
Let $\P$ be a convex stack polyomino and $[(1,1), (m,n)]$ the smallest interval which contains $V(\P)$. Then $\KK[\P]$ is Gorenstein if and only if $m=n$ and for each inside corner of $\P$ , by cutting all the cells of $\P$ which lie below the horizontal edge interval containing the corner, the minimal rectangle which contains the remaining polyomino is a square.
\end{Corollary}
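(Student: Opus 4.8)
The plan is to show that the two clauses of Corollary~\ref{cor 2} are merely a geometric transcription of condition (2) of Corollary~\ref{cor1}, so that it suffices to match, term by term, the combinatorial data ($T$, $N_Y(T)$) with the picture (inside corners, cutting, squares). Throughout I would work with the vertex-column heights $h_i:=\max\{j\mid (x_i,y_j)\in V(\P)\}$ for $i\in[m]$. Since $\P$ is a stack polyomino (bottom row of cells present, $\P$ convex), every vertex-column is a full interval $\{y_1,\dots,y_{h_i}\}$ as in Remark~\ref{rem}, and row-convexity forces the sequence $h_1,\dots,h_m$ to be \emph{unimodal}, that is, $\{i\mid h_i\ge j\}$ is an interval for every $j$; moreover $\max_i h_i=n$ and the number of columns is $m$. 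The condition $m=n$ occurs in both corollaries, so I would only need to prove that, assuming $m=n$, the per-corner square condition is equivalent to the numerical condition of Corollary~\ref{cor1}(2).

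First I would pin down which sets $T$ actually occur in Corollary~\ref{cor1}(2). By Remark~\ref{rem}, $N_Y(T)=\{y_1,\dots,y_s\}$ with $s=\max\{h_i\mid x_i\in T\}$. The requirement that $\max\{j\mid y_j\in N_Y(x)\}>s$ for all $x\in X\setminus T$ means every column outside $T$ has height $>s$, hence $\{x_i\mid h_i\le s\}\subseteq T$; combined with $T\subseteq\{x_i\mid h_i\le s\}$ (from the definition of $s$), this gives $T=\{x_i\mid h_i\le s\}$. The conditions $Y\setminus N_Y(T)\neq\emptyset$ and $T\subsetneq X$ then say exactly that $s$ is an attained height with $2\le s<n$. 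Thus the relevant sets $T$ are in bijection with such heights $s$, and for each of them $|N_Y(T)|=s$ and $|T|=|\{i\mid h_i\le s\}|$, so the condition to verify is $s=|T|+1$.

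Second I would match these heights with the inside corners. Directly from the definition I would check that, in a stack polyomino, a vertex lies in exactly three cells precisely at a \emph{step} of the top staircase: at an ascending step $h_i<h_{i+1}$ the vertex at the top of the shorter column is an inside corner, and symmetrically at descending steps, the corner always sitting at the height of the shorter of the two columns meeting there. Using unimodality I would show that an inside corner of height $s$ exists if and only if $s$ is an attained height with $2\le s<n$, i.e.\ exactly the parameters of the previous paragraph. Several inside corners may share one height $s$, but they produce the same $T$ and the same cut, so no double counting arises, and the correspondence is exhaustive in both directions.

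Third I would translate the cut. The horizontal edge interval through an inside corner of height $s$ lies at $y=s$; deleting the cells below it leaves exactly the cells in rows $\ge s$, which occupy the columns with $h_i>s$, namely the columns of $X\setminus T$. By unimodality these columns are consecutive, so the remaining polyomino has $x$-extent spanning $|X\setminus T|=m-|T|$ vertices and $y$-extent $\{y_s,\dots,y_n\}$ spanning $n-s+1$ vertices, and its minimal bounding rectangle is a square if and only if $m-|T|=n-s+1$. Under $m=n$ this equality rearranges to $s=|T|+1$, which is precisely $|N_Y(T)|=|T|+1$. Ranging over all inside corners (equivalently, all relevant $T$) then identifies the two corollaries, the rectangle case (no inside corners, no relevant $T$) being consistent as well. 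The main point requiring care is the vertex-counting in the cut — getting the ``$+1$'' in $n-s+1$ correct so that the square condition lines up exactly with $|N_Y(T)|=|T|+1$ — together with verifying that the passage between inside corners and the sets $T$ is genuinely a bijection onto the family singled out in Corollary~\ref{cor1}(2).
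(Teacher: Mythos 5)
Your proposal is correct and takes essentially the same route as the paper: both reduce the statement to Corollary~\ref{cor1} by matching each inside corner of height $s$ with the set $T=\{x_i\mid h_i\le s\}$ (and conversely extracting a corner at height $r=\max\{j\mid y_j\in N_Y(T)\}$ from any relevant $T$), and then translating the square condition into $|N_Y(T)|=|T|+1$ via the count $m-|T|=n-s+1$ under $m=n$. One immaterial nit: at an ascending step $h_i<h_{i+1}$ the inside corner is the vertex $(i+1,h_i)$ in the taller column rather than the top of the shorter column, but since only its height enters the cut, your argument is unaffected.
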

\begin{proof}
If $\mathbb{K}[\P]$ is Gorenstein, then let $(x_r,y_t)$ be an inside corner of $\P$. We denote by $T$ the set $\{x\in X\mid \max\{j\in [n]\mid y_j\in N_Y(x)\}\leq t\}$. Then \[\max\{j\in[n]\mid y_j\in N_Y(T)\}=t<n.\] For $x\in X\setminus T$ we obtain $\max\{j\in [n]\mid y_j\in N_Y(x)\}>t$. By Corollary~\ref{cor1}, we have $|N_Y(T)|=|T|+1$. Thus, $|T|=t-1$. In other words, $n-t+1=n-|T|$ and the minimal rectangle we are interested in is a square.

Conversely, we suppose that $T\subset X$ is a set with the properties that $Y\setminus N_Y(T)\neq \emptyset$ and for every $x\in X\setminus T$, $$\max\{j\in [n]\mid y_j\in N_Y(x)\}>\max\{j\in [n]\mid y_j\in N_Y(T)\}.$$ Let $r=\max\{j\in [n]\mid y_j\in N_Y(T)\}<n$.

Since $\P$ is a column convex polyomino, $y_r$ is the $y$-coordinate of an inside corner. Then by hypothesis, $|X\setminus T|=n-r+1$. Hence, $n-|T|=n-r+1$ and $|T|+1=r=|N_Y(T)|$. By Corollary~\ref{cor1}, $\mathbb{K}[\P]$ is Gorenstein.\end{proof}

\begin{Examples}
By Corollary~\ref{cor 2}, the first polyomino of Figure~\ref{fig12} is Gorenstein, while the second is not.
\begin{figure}
  \centering
  \begin{tikzpicture}[domain=0:10]
        \filldraw[fill=black!5!white, draw=black] (0,0) rectangle (1,1);
        \filldraw[fill=black!5!white, draw=black] (1,0) rectangle (2,1) ;
        \filldraw[fill=black!5!white, draw=black] (2,0) rectangle (3,1) ;
        \filldraw[fill=black!5!white, draw=black] (3,0) rectangle (4,1) ;
        \filldraw[fill=black!5!white, draw=black] (0,1) rectangle (1,2) ;
        \filldraw[fill=black!5!white, draw=black] (1,1) rectangle (2,2) ;
        \filldraw[fill=black!5!white, draw=black] (2,1) rectangle (3,2) ;
        \filldraw[fill=black!5!white, draw=black] (3,1) rectangle (4,2) ;
        \filldraw[fill=black!5!white, draw=black] (1,2) rectangle (2,3) ;
        \filldraw[fill=black!5!white, draw=black] (2,2) rectangle (3,3) ;
        \filldraw[fill=black!5!white, draw=black] (2,3) rectangle (3,4) ;

        \filldraw[fill=black!5!white, draw=black] (6,0) rectangle (7,1);
        \filldraw[fill=black!5!white, draw=black] (7,0) rectangle (8,1) ;
        \filldraw[fill=black!5!white, draw=black] (8,0) rectangle (9,1) ;
        \filldraw[fill=black!5!white, draw=black] (9,0) rectangle (10,1) ;
        \filldraw[fill=black!5!white, draw=black] (7,1) rectangle (8,2) ;
        \filldraw[fill=black!5!white, draw=black] (8,1) rectangle (9,2) ;
        \filldraw[fill=black!5!white, draw=black] (7,2) rectangle (8,3) ;
        \filldraw[fill=black!5!white, draw=black] (8,2) rectangle (9,3) ;
        \filldraw[fill=black!5!white, draw=black] (8,3) rectangle (9,4) ;
  \end{tikzpicture}
  \caption{}\label{fig12}
\end{figure}
\end{Examples}

\section{The regularity of $\mathbb{K}[\P]$}

Let $\P$ be a stack polyomino on $[m]\times[n]$. Recall that the coordinate ring of $\P$ is a finitely generated module over the polynomial ring $S=\mathbb{K}[x_{ij}\mid (i,j)\in V(\P)]$. The Castelnuovo-Mumford regularity of $\mathbb{K}[\P]$, denoted $\reg(\mathbb{K}[\P])$, is defined to be the largest integer $r$ such that, for every $i$, the $i^{th}$ syzygy of $\mathbb{K}[\P]$ is generated in degree at most $r+i$.

We consider $H_{\mathbb{K}[\P]}(t)$ to be the Hilbert series of $\mathbb{K}[\P]$. Then $$H_{\mathbb{K}[\P]}(t)=\frac{Q(t)}{(1-t)^d}$$ where $Q(t)\in \mathbb{Z}[t]$ and where $d$ is the Krull dimension of $\mathbb{K}[\P]$. According to \cite[Theorem 2.2]{Q}, $d=\dim(\mathbb{K}[\P])=m+n-1$.

Since $\mathbb{K}[\P]$ is a Cohen-Macaulay ring, we have $$\reg(\mathbb{K}[\P])=\deg(Q(t))=\dim(\mathbb{K}[\P])+a(\mathbb{K}[\P]).$$ For the proof, we refer, for example, to \cite[Corollary B.4.1]{Va}.  

The $a$-invariant $a(\mathbb{K}[\P])$ of $\mathbb{K}[\P]$ is defined to be the degree of the Hilbert series of $\mathbb{K}[\P]$ that is, $a(\KK[\P])=\deg(Q(t))-d$.

Let $G_{\P}$ be the bipartite graph attached to $\P$. In this section, we consider $G_{\P}$ as a digraph with all its arrows leaving the vertex set $Y$. Following \cite{VV}, we introduce the following notion.
\begin{Definition}\label{directed cut}
If $T\subset X\cup Y$, then \[\delta^+(T)=\{e=(z,w)\in E(G_{\P})\mid z\in T\text{ and }w\notin T\}\] is the set of edges leaving the vertex set $T$ and \[\delta^-(T)=\{e=(z,w)\in E(G_{\P})\mid z\notin T\text{ and }w\in T\}\] is the set of edges entering the vertex set $T$. 

The set $\delta^+(T)$ is called a directed cut of the digraph $G_{\P}$ if $\emptyset\neq T\subsetneq X\cup Y$ and $\delta^-(T)=\emptyset$.
\end{Definition}
\begin{Example}
In the digraph of Figure~\ref{fig13}, let $T_1=\{x_3, y_2, y_3\}$ and $T_2=\{x_3, y_1, y_2\}$. Then we notice that
\[\emptyset\neq\delta^+(T_1)=\{(y_2, x_1), (y_2,x_2), (y_3, x_1), (y_3, x_2)\}\text{ and }\delta^-(T_1)=\{(y_1, x_3)\}\neq\emptyset,\] while
\[\emptyset\neq\delta^+(T_2)=\{(y_1, x_1), (y_1,x_2), (y_2, x_1), (y_2, x_2)\}\text{ and }\delta^-(T_2)=\emptyset.\]
Thus, $\delta^+(T_2)$ is a directed cut, while $\delta^+(T_1)$ is not.

\begin{figure}
  \centering
  \begin{tikzpicture}[domain=0:10]
        \filldraw[fill=black!5!white, draw=black] (0,0) rectangle (1,1);
        \filldraw[fill=black!5!white, draw=black] (0,1) rectangle (1,2) ;
        \filldraw[fill=black!5!white, draw=black] (0,2) rectangle (1,3) ;
        \filldraw[fill=black!5!white, draw=black] (1,0) rectangle (2,1) ;
        \filldraw (4,3) circle (0.25pt) node[above] {$x_1$};
        \filldraw (5,3) circle (0.25pt) node[above] {$x_2$};
        \filldraw (6,3) circle (0.25pt) node[above] {$x_3$};
        \filldraw (4,1) circle (0.25pt) node[below] {$y_1$};
        \filldraw (5,1) circle (0.25pt) node[below] {$y_2$};
        \filldraw (6,1) circle (0.25pt) node[below] {$y_3$};
        \filldraw (7,1) circle (0.25pt) node[below] {$y_4$};
        \draw[->,black] (4,1) -- (4,3);
        \draw[->,black] (4,1) -- (5,3);
        \draw[->,black] (4,1) -- (6,3);
        \draw[->,black] (5,1) -- (4,3);
        \draw[->,black] (5,1) -- (5,3);
        \draw[->,black] (5,1) -- (6,3);
        \draw[->,black] (6,1) -- (4,3);
        \draw[->,black] (6,1) -- (5,3);
        \draw[->,black] (7,1) -- (4,3);
        \draw[->,black] (7,1) -- (5,3);
  \end{tikzpicture}
  \caption{A stack polyomino and its associated digraph}\label{fig13}
\end{figure}
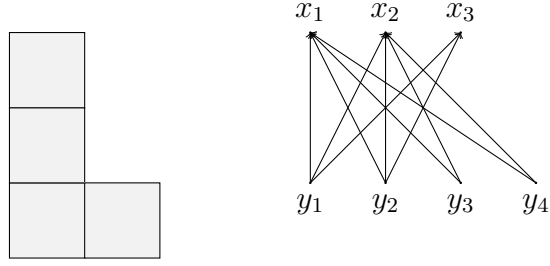
\end{Example}
\begin{Remarks}
Since $\KK[\P]\cong\KK[G_{\P}]$, we consider \[\delta^+(T)=\{(x,y)\in V(\P)\mid x\notin T\text{ and }y\in T\}\] and \[\delta^-(T)=\{(x,y)\in V(\P)\mid x\in T\text{ and } y\notin T\}\] for all $T\subset X\cup Y$.
If $T=X$, then $\delta^+(T)=\emptyset$. If $T\subsetneq Y$, then $\delta^-(T)=\emptyset$ and $\delta^+(T)$ is a directed cut of $G_{\P}$.
\end{Remarks}
\begin{Lemma}\label{dir_cut}
Let $\emptyset\neq T\subsetneq X\cup Y$. Then $\delta^+(T)$ is a directed cut of the digraph $G_{\P}$ if and only if $T=T^x\cup T^y$ with $T^x\subset X$, $T^y\subset Y$ and $N_Y(T^x)\subset T^y$.
\end{Lemma}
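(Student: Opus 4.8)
The plan is to treat this as a direct unwinding of the definitions, since beyond translating the directed-cut condition into neighborhood language there is essentially no combinatorial content. First I would observe that, by Definition~\ref{directed cut} together with the standing hypothesis $\emptyset\neq T\subsetneq X\cup Y$, the set $\delta^+(T)$ is a directed cut of $G_{\P}$ if and only if $\delta^-(T)=\emptyset$. So the whole equivalence reduces to characterising when $\delta^-(T)=\emptyset$. Moreover, because $G_{\P}$ is bipartite on $X\cup Y$, every subset $T$ splits canonically and uniquely as $T=T^x\cup T^y$ with $T^x=T\cap X$ and $T^y=T\cap Y$; this splitting is automatic, so the real content of the lemma is the equivalence $\delta^-(T)=\emptyset\iff N_Y(T^x)\subset T^y$.

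Next I would translate $\delta^-$ into vertices of $\P$ using the reformulation recorded in the Remarks preceding the lemma, namely $\delta^-(T)=\{(x,y)\in V(\P)\mid x\in T,\ y\notin T\}$. Since each such pair has $x\in X$ and $y\in Y$, the conditions $x\in T$ and $y\notin T$ are the same as $x\in T^x$ and $y\notin T^y$. Hence $\delta^-(T)=\emptyset$ asserts exactly that no vertex of $\P$ couples an element of $T^x$ with an element of $Y\setminus T^y$; equivalently, every $y\in Y$ that is adjacent in $\P$ to some $x\in T^x$ already lies in $T^y$, which is precisely the inclusion $N_Y(T^x)\subset T^y$.

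Finally I would record the two implications. For the backward direction, assume $N_Y(T^x)\subset T^y$ and suppose toward a contradiction that some $(x,y)\in\delta^-(T)$; then $x\in T^x$ forces $y\in N_Y(x)\subset N_Y(T^x)\subset T^y\subset T$, contradicting $y\notin T$, so $\delta^-(T)=\emptyset$ and $\delta^+(T)$ is a directed cut. For the forward direction, assume $\delta^-(T)=\emptyset$ and take any $y\in N_Y(T^x)$; choosing $x\in T^x$ with $(x,y)\in V(\P)$, if $y\notin T^y$ then $(x,y)\in\delta^-(T)$, a contradiction, so $y\in T^y$ and $N_Y(T^x)\subset T^y$. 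I expect no substantive obstacle here, as the argument is purely definitional; the only points needing care are that the directed-cut condition is phrased through $\delta^-$ rather than $\delta^+$, and that the bipartite structure is what lets ``$x\in T$'' and ``$y\notin T$'' collapse to ``$x\in T^x$'' and ``$y\notin T^y$.''
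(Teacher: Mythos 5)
Your proposal is correct and follows essentially the same route as the paper's proof: both reduce the directed-cut condition to $\delta^-(T)=\emptyset$ via the definition, use the canonical bipartite splitting $T=T^x\cup T^y$, and establish the equivalence with $N_Y(T^x)\subset T^y$ by the same two short contradiction arguments. No gaps to report.
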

\begin{proof}
Let $T\neq\emptyset$ be a subset in $X\cup Y$. Then $T=T^x\cup T^y$ with $T^x\subset X$ and $T^y\subset Y$. By Definition~\ref{directed cut}, \[\delta^+(T)=\{(x,y)\in V(\P)\mid x\notin T^x\text{ and }y\in T^y\}\] is a directed cut of $G_{\P}$ if and only if \[\delta^-(T)=\{(x,y)\mid x\in T^x\text{ and }y\notin T^y\}=\emptyset.\] Suppose that $N_Y(T^x)\nsubseteq T^y$. Then there exist $x\in T^x$ and $y\in Y\setminus T^y$ such that $(x,y)\in V(\P)$. In other words, $(x,y)\in \delta^-(T)=\emptyset$, which is impossible.

Conversely, suppose that $N_Y(T^x)\subset T^y$ and $\delta^-(T)\neq\emptyset$. Then we find $x\in T^x$ and $y\in Y\setminus T^y$ such that $(x,y)\in V(\P)$. This is equivalent to saying that $y\in N_Y(x)\setminus T^y\subset N_Y(T^x)\setminus T^y$, which is false.
\end{proof}
In \cite{VV}, Valencia and Villarreal show that for any connected bipartite graph $G$, the $a$-invariant, $a(\KK[G])$ can be interpreted in combinatorial terms as follows.
\begin{Proposition}\label{a-inv}\cite[Proposition 4.2]{VV}
Let $G$ be a connected bipartite graph with $V(G)=X\cup Y$. If $G$ is a digraph with all its arrows leaving the vertex set $Y$, then
\[-a(\KK[G])=\text{the maximum number of disjoint directed cuts}.\]
\end{Proposition}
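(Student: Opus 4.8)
The plan is to realise $\KK[G]$ as a normal affine semigroup ring, extract its $a$-invariant from the geometry of the edge cone via the Danilov--Stanley description of the canonical module, and then recognise the resulting lattice-point minimisation as an integral min--max problem for directed cuts. Set $A=\{\chi_e=\mathbf e_x+\mathbf e_y\mid e=\{x,y\}\in E(G)\}\subset\ZZ^{X\cup Y}$ and let $\mathcal C=\RR_{\geq 0}A$ be the edge cone, which lies in the hyperplane $H\colon \sum_{x\in X}a_x=\sum_{y\in Y}b_y$. Under the standard grading every generator $\chi_e$ has degree $1$, so a semigroup element $w=(a,b)$ has $\deg w=\sum_x a_x=\sum_y b_y$, independently of any representation. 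For a connected bipartite graph $G$ the edge ring is a normal Cohen--Macaulay domain, hence equals the semigroup ring of $\mathcal C\cap\ZZ A$, and the Danilov--Stanley theorem identifies its canonical module with the interior monomials. This gives $-a(\KK[G])=\min\{\deg w\mid w\in\relint(\mathcal C)\cap\ZZ A\}$, so everything reduces to understanding minimal-degree interior lattice points of $\mathcal C$.

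Next I would set up the dictionary between the cone and directed cuts. To each directed cut, written by Lemma~\ref{dir_cut} as $T=T^x\cup T^y$ with $N_Y(T^x)\subset T^y$, attach the functional $\phi_T(a,b)=\sum_{y\in T^y}b_y-\sum_{x\in T^x}a_x$. A one-line check on the generators shows $\phi_T(\chi_e)=1$ when $e\in\delta^+(T)$ and $\phi_T(\chi_e)=0$ otherwise, the offending case $x\in T^x,\,y\notin T^y$ being excluded precisely by the cut condition $N_Y(T^x)\subset T^y$; in particular $\phi_T\geq 0$ on $\mathcal C$. The point I must establish is the converse: that these are exactly the facet inequalities, i.e.\ $\mathcal C=\{w\in H\mid \phi_T(w)\geq 0\text{ for all directed cuts }T\}$. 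Reading $Y$ as sources, $X$ as sinks and the $\chi_e$ as unit flows, membership of $(a,b)$ in $\mathcal C$ is feasibility of a transportation problem, which by the Gale--Hoffman feasibility criterion is governed exactly by the inequalities $\sum_{y\in N_Y(U)}b_y\geq\sum_{x\in U}a_x$, i.e.\ $\phi_{T}\geq 0$ for the canonical cuts $T=U\cup N_Y(U)$. Taking relative interiors makes all these inequalities strict, and integrality upgrades strictness to $\phi_T\geq 1$; hence
\[
-a(\KK[G])=\min\Big\{\sum_{x\in X}a_x \;\Big|\; (a,b)\in\ZZ A,\ \phi_T(a,b)\geq 1\text{ for every directed cut }T\Big\}.
\]

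Finally I would convert this to a purely combinatorial quantity. By normality an interior lattice point can be written as $\sum_e c_e\chi_e$ with $c_e\in\ZZ_{\geq 0}$, and then $\phi_T=\sum_{e\in\delta^+(T)}c_e$, so $F=\{e\mid c_e>0\}$ meets every directed cut, i.e.\ $F$ is a \emph{dijoin}, of size $\leq\sum_e c_e=\deg w$. Conversely any dijoin $F$ produces the interior lattice point $\sum_{e\in F}\chi_e$ of degree $|F|$, because $\phi_T=\lvert F\cap\delta^+(T)\rvert\geq 1$ for every directed cut forces it into $\relint(\mathcal C)$ by the facet description. Thus $-a(\KK[G])$ equals the minimum size of a dijoin, and the Lucchesi--Younger theorem equates the minimum size of a dijoin with the maximum number of arc-disjoint directed cuts, which is the assertion.

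The genuinely transparent part is weak duality: given arc-disjoint directed cuts $\delta^+(T_1),\dots,\delta^+(T_k)$ and any interior lattice point $w=\sum_e c_e\chi_e$, the fact that $\sum_{l}\phi_{T_l}(\chi_e)\leq 1=\deg(\chi_e)$ for each edge gives $k\leq\sum_l\phi_{T_l}(w)\leq\deg w$, so every interior point has degree at least the packing number. The two substantive inputs are (i) that the edge cone is \emph{exactly} cut out by the directed-cut inequalities, which I would reduce to transportation feasibility, and (ii) the integral min--max identity between dijoins and disjoint directed cuts. Step (ii), the Lucchesi--Younger theorem, is the main obstacle, since the naive linear program packing directed cuts need not be integral in general and its integrality is exactly the nontrivial content one must import.
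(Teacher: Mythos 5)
The paper does not actually prove this proposition---it is imported without proof from \cite[Proposition 4.2]{VV}---so the only meaningful comparison is with the source, and your argument is correct and follows essentially that original route: realize $\KK[G]$ as a normal affine semigroup ring, use the Danilov--Stanley description of the canonical module to express $-a(\KK[G])$ as the minimal degree of a lattice point in the relative interior of the edge cone, match the defining inequalities of that cone with directed cuts, and invoke the Lucchesi--Younger min--max theorem to pass from minimal dijoins to maximal packings of disjoint directed cuts. The only loose thread is in the Gale--Hoffman step, where feasibility also requires the coordinate inequalities $a_x\geq 0$, $b_y\geq 0$; these are harmless because they are themselves directed-cut inequalities (take $T=(X\setminus\{x\})\cup Y$ and $T=\{y\}$, respectively), so your facet description of the cone stands.
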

\begin{Example}
In the digraph of Figure~\ref{fig13}, $-a(\KK[G_{\P}])=4$ and a set of disjoint directed cuts is $\{\delta^+(\{y_1\}), \delta^+(\{y_2\}),\delta^+(\{y_3\}),\delta^+(\{y_4\})\}$.
\end{Example}
\begin{Lemma}\label{disj-cut}
Let $T_1=T_1^x\cup T_1^y$ and $T_2=T_2^x\cup T_2^y$ be two subsets in $X\cup Y$ such that $T_1^x, T_2^x\subset X$, $T_1^y, T_2^y\subset Y$ and $\delta^+(T_1)$, $\delta^+(T_2)$ are directed cuts. 

Then $\delta^+(T_1)\cap\delta^+(T_2)=\emptyset$ if and only if $T_1^x\cup T_2^x=X$ or $T_1^y\cap T_2^y=\emptyset$.
\end{Lemma}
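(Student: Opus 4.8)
The plan is to translate the disjointness of the two directed cuts into a statement about the vertices of $\P$ and then read off the equivalence. By the description in the Remarks, a vertex $(x,y)\in V(\P)$, viewed as an edge of $G_{\P}$, lies in $\delta^+(T_i)$ precisely when $x\notin T_i^x$ and $y\in T_i^y$. Intersecting the two conditions gives
\[
\delta^+(T_1)\cap\delta^+(T_2)=\{(x,y)\in V(\P)\mid x\in X\setminus(T_1^x\cup T_2^x)\text{ and }y\in T_1^y\cap T_2^y\}.
\]
So the first step is to record that $\delta^+(T_1)\cap\delta^+(T_2)=\emptyset$ is equivalent to the nonexistence of a vertex of $\P$ whose $x$-coordinate avoids $T_1^x\cup T_2^x$ and whose $y$-coordinate lies in $T_1^y\cap T_2^y$; equivalently, $N_X(T_1^y\cap T_2^y)\subseteq T_1^x\cup T_2^x$.

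From this reformulation the implication from right to left is immediate, and I would dispose of it first. If $T_1^x\cup T_2^x=X$, there is no admissible $x$, and if $T_1^y\cap T_2^y=\emptyset$ there is no admissible $y$; in either case the displayed set is empty, so the cuts are disjoint.

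For the converse I would argue by contraposition: assuming $T_1^x\cup T_2^x\neq X$ and $T_1^y\cap T_2^y\neq\emptyset$, I must exhibit a vertex of $\P$ in the intersection. The key observation is that the bottom row is the easiest place to find a common edge: since $\P$ is a stack polyomino, Remark~\ref{rem} gives $y_1\in N_Y(x)$ for every $x\in X$, so $(x,y_1)\in V(\P)$ for all $x$. Thus, if I can show $y_1\in T_1^y\cap T_2^y$, then choosing any $x\in X\setminus(T_1^x\cup T_2^x)$ (possible since this set is nonempty) produces a vertex $(x,y_1)$ lying in $\delta^+(T_1)\cap\delta^+(T_2)$, the desired contradiction. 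To see that $y_1\in T_i^y$ I would invoke Lemma~\ref{dir_cut}: because $\delta^+(T_i)$ is a directed cut, $N_Y(T_i^x)\subseteq T_i^y$, and as soon as $T_i^x\neq\emptyset$ the stack hypothesis forces $y_1\in N_Y(T_i^x)\subseteq T_i^y$.

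The main obstacle is the case in which one of the sets, say $T_1$, is supported entirely on $Y$, i.e. $T_1^x=\emptyset$: then $N_Y(T_1^x)=\emptyset$ and nothing forces $y_1$ into $T_1^y$, so the clean bottom-row argument above breaks down. Here I would fall back on the finer convexity of $\P$: for a stack polyomino every column is an initial segment, so the column supports of the rows are nested intervals $N_X(y_1)\supseteq N_X(y_2)\supseteq\cdots$, and for a row $y_j\in T_1^y\cap T_2^y$ the set $N_X(y_j)$ is an interval. The plan is to locate within $N_X(y_j)$ a column lying outside $T_1^x\cup T_2^x$, combining the directed-cut constraints $N_Y(T_i^x)\subseteq T_i^y$ with the hypothesis $T_1^x\cup T_2^x\neq X$. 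Making this selection go through is the delicate heart of the proof, and it is precisely where the full stack structure — rather than mere convexity — must be used; I would expect to spend most of the effort isolating exactly which row $y_j\in T_1^y\cap T_2^y$ and which column in $N_X(y_j)$ produce the required common edge.
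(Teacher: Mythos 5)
Your reformulation of disjointness --- there is no vertex $(x,y)\in V(\P)$ with $x\notin T_1^x\cup T_2^x$ and $y\in T_1^y\cap T_2^y$, i.e. $N_X(T_1^y\cap T_2^y)\subseteq T_1^x\cup T_2^x$ --- is correct, the right-to-left implication does follow at once from it, and your bottom-row argument completely settles the converse when $T_1^x\neq\emptyset$ and $T_2^x\neq\emptyset$. The case you leave as a plan, $T_i^x=\emptyset$ for some $i$, is however a genuine gap, and it cannot be closed: the ``only if'' direction is false there. Take the stack polyomino of Figure~\ref{fig13} itself, where $N_Y(x_3)=\{y_1,y_2\}$ and $N_X(y_4)=\{x_1,x_2\}$. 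Let $T_1=\{y_4\}$ (a directed cut, as noted in the remark before Lemma~\ref{dir_cut}, since $T_1\subsetneq Y$) and $T_2=\{x_1,x_2\}\cup Y$ (a directed cut by Lemma~\ref{dir_cut}, since $N_Y(\{x_1,x_2\})=Y=T_2^y$, and proper since $x_3\notin T_2$). Then $\delta^+(T_1)=\{(x_1,y_4),(x_2,y_4)\}$ and $\delta^+(T_2)=\{(x_3,y_1),(x_3,y_2)\}$ are disjoint, yet $T_1^x\cup T_2^x=\{x_1,x_2\}\neq X$ and $T_1^y\cap T_2^y=\{y_4\}\neq\emptyset$. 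The column you hope to locate inside $N_X(y_4)$ but outside $T_1^x\cup T_2^x$ does not exist, because $N_X(y_4)\subseteq T_2^x$; no refinement of convexity or of the stack structure can produce it.

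For comparison, the paper's own proof is a one-line computation: it writes $\delta^+(T_1)\cap\delta^+(T_2)=\{(x,y)\in V(\P)\mid x\notin T_1^x\cup T_2^x\text{ and }y\in T_1^y\cap T_2^y\}$ and then asserts that this set is empty if and only if $X\setminus(T_1^x\cup T_2^x)=\emptyset$ or $T_1^y\cap T_2^y=\emptyset$. That step tacitly assumes that every pair $(x,y)$ with $x\notin T_1^x\cup T_2^x$ and $y\in T_1^y\cap T_2^y$ is actually a vertex of $\P$, which is exactly what fails above (the pair $(x_3,y_4)$ is not in $V(\P)$). So your instinct that the $T_i^x=\emptyset$ case carries the real content was sound --- the published argument overlooks it --- but the honest conclusion is that the statement holds only under the additional hypothesis $T_1^x\neq\emptyset\neq T_2^x$ (where your proof is complete), or with the equivalence weakened to the right-to-left implication; the application of this lemma in the proof of Lemma~\ref{a-invpoly} would then have to treat cuts with $T_i^x=\emptyset$ separately.
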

\begin{proof}
Since \[\delta^+(T_1)=\{(x,y)\in V(\P)\mid x\notin T_1^x\text{ and }y\in T_1^y\}\] and \[\delta^+(T_2)=\{(x,y)\in V(\P)\mid x\notin T_2^x\text{ and }y\in T_2^y\},\] \[\delta^+(T_1)\cap \delta^+(T_2)=\{(x,y)\in V(\P)\mid x\notin T_1^x\cup T_2^x\text{ and }y\in T_1^y\cap T_2^y\}.\] Thus, $\delta^+(T_1)\cap \delta^+(T_2)=\emptyset$ if and only if $X\setminus(T_1^x\cup T_2^x)=\emptyset$ or $T_1^y\cap T_2^y=\emptyset$.
\end{proof}
\begin{Remark}\label{imp}
Let $\P$ be a stack polyomino on $[m]\times[n]$. Then
\[\delta^+(\{y_1\})=\{(x, y_1)\in V(\P)\mid x\in N_X(y_1)\}=N_X(y_1)\times \{y_1\},\]
\[\delta^+(\{y_2\})=\{(x, y_2)\in V(\P)\mid x\in N_X(y_2)\}=N_X(y_2)\times \{y_2\},\]
\[\vdots\]
\[\delta^+(\{y_n\})=\{(x, y_n)\in V(\P)\mid x\in N_X(y_n)\}=N_X(y_n)\times \{y_n\}\]
are disjoint directed cuts and also,
\[\delta^+(\{x_2,x_3,\ldots, x_m,y_1,y_2,\ldots,y_n\})=\{(x_1, y)\in V(\P)\mid y\in N_Y(x_1)\}=\]\[ =\{x_1\}\times N_Y(x_1),\]
\[\delta^+(\{x_1,x_3,\ldots,x_m,y_1,y_2, \ldots,y_n\})=\{(x_2, y)\in V(\P)\mid y\in N_Y(x_2)\}=\]\[=\{x_2\}\times N_Y(x_2),\]
\[\vdots\]
\[\delta^+(\{x_1,x_2,\ldots,x_{m-1},y_1,\ldots,y_n\})=\{(x_m, y)\in V(\P)\mid y\in N_Y(x_m)=\]\[=\{x_m\}\times N_Y(x_m)\]
are directed disjoint cuts.

By Proposition~\ref{a-inv}, we notice that $-a(\KK[\P])\geq\max\{m,n\}$.
\end{Remark}
\begin{Lemma}\label{a-invpoly}
If $\P$ is a stack polyomino on $[m]\times[n]$, then \[-a(\KK[\P])\leq\max\{m,n\}.\]
\end{Lemma}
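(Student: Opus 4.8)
The plan is to combine Proposition~\ref{a-inv} with the two structural lemmas on directed cuts. By Proposition~\ref{a-inv}, $-a(\KK[\P])$ equals the maximum number of pairwise disjoint directed cuts, so it suffices to take an arbitrary family $\delta^+(T_1),\dots,\delta^+(T_k)$ of pairwise disjoint (nonempty) directed cuts and to show $k\le\max\{m,n\}$. Writing $T_i=T_i^x\cup T_i^y$ with $T_i^x\subset X$ and $T_i^y\subset Y$, I set $U_i=X\setminus T_i^x$ and $W_i=T_i^y$; Lemma~\ref{dir_cut} gives $N_Y(T_i^x)\subset W_i$, nonemptiness of the cut forces $U_i\neq\emptyset$ and $W_i\neq\emptyset$ (so $T_i^x\subsetneq X$), and Lemma~\ref{disj-cut} says that for $i\neq j$ one has $U_i\cap U_j=\emptyset$ or $W_i\cap W_j=\emptyset$.

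The two facts about stack polyominoes I would feed in are: every column satisfies $\{y_1,y_2\}\subset N_Y(x)$ (Remark~\ref{rem}), and, since $[(1,1),(m,n)]$ is the smallest interval containing $V(\P)$, some tallest column $x^\ast$ has $N_Y(x^\ast)=Y$. Using the first fact I would dichotomize the cuts according to whether $y_1\in W_i$ (``low'') or $y_1\notin W_i$ (``high''). The key observation is that a high cut must have $T_i^x=\emptyset$: otherwise a column $x\in T_i^x$ would drag $\{y_1,y_2\}\subset N_Y(x)\subset W_i$ into $W_i$. Consequently, for a high cut $i$ and any other cut $l$ we have $T_i^x\cup T_l^x=T_l^x\subsetneq X$, so Lemma~\ref{disj-cut} forces $W_i\cap W_l=\emptyset$; that is, a high cut is $W$-disjoint from every other cut.

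With this in hand the argument splits into two cases. If there are no high cuts, then $y_1\in W_i$ for all $i$, so no two $W_i$ are disjoint and Lemma~\ref{disj-cut} forces the $U_i$ to be pairwise disjoint nonempty subsets of $X$, whence $k\le m$. If there is at least one high cut $h$, I would first show there is at most one low cut: for any low cut $i$ one checks $x^\ast\in U_i$ (if $T_i^x=\emptyset$ this is clear, and otherwise $x^\ast\in T_i^x$ would give $N_Y(x^\ast)=Y\subset W_i$, i.e. $W_i=Y$, contradicting $W_h\cap W_i=\emptyset$ with $W_h\neq\emptyset$); since two distinct low cuts share $y_1$ and hence must have disjoint $U$-parts, they cannot both contain $x^\ast$. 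Thus every pair of cuts is $W$-disjoint (high--high and high--low by the key observation, and there is no low--low pair), so the $W_i$ are pairwise disjoint nonempty subsets of $Y$ and $k\le n$. In either case $k\le\max\{m,n\}$.

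The main obstacle is the interaction between the two families of cuts from Remark~\ref{imp}: naively bounding the ``low'' cuts by $m$ and the ``high'' cuts by $n$ only yields $k\le m+n$, so the real content is the rigidity forced by the tallest column --- once a high cut exists, the tallest column $x^\ast$ must sit in the $U$-part of every low cut, which collapses the low cuts to at most one and lets all $W_i$ be taken pairwise disjoint. Identifying $x^\ast$ and exploiting $N_Y(x^\ast)=Y$ is the crucial step; the rest is bookkeeping with Lemmas~\ref{dir_cut} and~\ref{disj-cut}.
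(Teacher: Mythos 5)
Your proof is correct and is essentially the paper's argument: the same reduction via Proposition~\ref{a-inv}, the same bookkeeping with Lemmas~\ref{dir_cut} and~\ref{disj-cut}, and the same two stack-polyomino facts (a nonempty $T^x$ forces $\{y_1,y_2\}\subset N_Y(T^x)\subset T^y$, and a tallest column $x^\ast$ with $N_Y(x^\ast)=Y$ forces $T^y=Y$ whenever $x^\ast\in T^x$), ending with a pigeonhole bound on pairwise disjoint nonempty subsets of $Y$ (giving $\le n$) or of $X$ (giving $\le m$). The only difference is organizational: the paper argues by contradiction and splits on whether the $T_i^y$ are pairwise disjoint, doing the work on the $X$-side in the nontrivial case, whereas you argue directly and split on whether some cut's $Y$-part misses $y_1$, doing the work on the $Y$-side; the two case analyses are mirror images assembled from the same ingredients.
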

\begin{proof}
Let $\P$ be a stack polyomino on $[m]\times[n]$ and $G_{\P}$ its associated bipartite graph. Since $\KK[\P]\cong\KK[G_{\P}]$, we have \[-a(\KK[\P])=\text{the maximum number of disjoint directed cuts},\] by Proposition~\ref{a-inv}.

We suppose that there are $\delta^+(T_1),\delta^+(T_2),\ldots, \delta^+(T_p)$ disjoint directed cuts with $T_1\subset X \cup Y, \ldots, T_p\subset X\cup Y$ and $p>\max\{m,n\}$. Moreover, for every $i$, we consider $T_i=T_i^x\cup T_i^y$ with $T_i^x\subset X$ and $T_i^y\subset Y$. If $T_i^x=X$ or $T_i^y=\emptyset$, then $\delta^+(T_i)=\emptyset$. Therefore, we may suppose that $T_i^x\neq X$ and $T_i^y\neq\emptyset$ for every $i\in [p]$.

If for every $i,j\in [p]$ with $i\neq j$ we have $T_i^y\cap T_j^y=\emptyset$, then \[\max\{m,n\}\geq n=|Y|\geq|\cup_{i=1}^pT_i^y|=\sum_{i=1}^{p}|T_i^y|\geq p,\] which is false.

If there are $i_1$ and $i_2\in[p]$ such that $T_{i_1}^y\cap T_{i_2}^y\neq\emptyset$, then $T_{i_1}^x\cup T_{i_2}^x=X$, by Lemma~\ref{disj-cut}. Since $\P$ is a stack polyomino, there is $x\in X=T_{i_1}^x\cup T_{i_2}^x$ such that $N_Y(x)=Y$. Without loss of generality, we suppose that $x\in T_{i_1}^x$. By Lemma~\ref{dir_cut}, this implies that $Y=N_Y(T_{i_1}^x)\subset T_{i_1}^y$. Thus, $T_{i_1}^y=Y$.

Let $j\in[p]\setminus\{i_1\}$. Then $T_j^y\cap T_{i_1}^y=T_j^y\neq\emptyset$. By Lemma~\ref{disj-cut}, we have $T_j^x\cup T_{i_1}^x=X$. Thus, we obtain that $T_i^x\neq\emptyset$ for every $i\in [p]$. Because $\P$ is a stack polyomino, $\{y_1, y_2\}\subset N_Y(T_i^x)\subset T_i^y$ for every $i\in[p]$. In other words, $T_i^y\cap T_j^y\neq\emptyset$ for every $i,j\in [p]$ with $i\neq j$. This implies that $T_i^x\cup T_j^x=X$ for every $i,j\in [p]$ with $i\neq j$. This is equivalent to saying that $(X\setminus T_i^x)\cap (X\setminus T_j^x)=\emptyset$ for every $i,j\in [p]$ with $i\neq j$. We note that $X\setminus T_i^x\neq\emptyset$ for every $i\in [p]$. Thus, \[\max\{m,n\}\geq m=|X|\geq |\cup_{i=1}^{p}C(T_i^x)|=\sum_{i=1}^{p}|C(T_i^x)|\geq p,\] which is false.
\end{proof}

Based on Proposition~\ref{a-inv}, Lemma~\ref{a-invpoly} and Remark~\ref{imp}, we obtain the following result.
\begin{Theorem}\label{p}
If $\P$ is a stack polyomino on $[m]\times[n]$, then \[-a(\mathbb{\KK[\P]})=\max\{m,n\}.\]
\end{Theorem}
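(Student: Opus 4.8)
The plan is to sandwich the quantity $-a(\KK[\P])$ between two matching bounds, both of which have essentially been prepared in the preceding results. Since $\KK[\P]\cong\KK[G_{\P}]$ and $G_{\P}$ is a connected bipartite graph (indeed $2$-connected, as shown earlier), Proposition~\ref{a-inv} lets me replace the algebraic invariant by a purely combinatorial one: $-a(\KK[\P])$ equals the maximum number of pairwise disjoint directed cuts of the digraph $G_{\P}$, with all arrows leaving $Y$. The whole argument then reduces to computing this maximum.

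For the lower bound I would invoke Remark~\ref{imp}. There two explicit families of directed cuts are exhibited: the $n$ cuts $\delta^+(\{y_j\})=N_X(y_j)\times\{y_j\}$, $j\in[n]$, which are pairwise disjoint because they lie in distinct rows $y_j$; and the $m$ cuts $\delta^+((X\cup Y)\setminus\{x_i\})=\{x_i\}\times N_Y(x_i)$, $i\in[m]$, which are pairwise disjoint because they lie in distinct columns $x_i$. Each family is a valid collection of disjoint directed cuts, so the maximum is at least $n$ and at least $m$, whence $-a(\KK[\P])\geq\max\{m,n\}$.

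For the matching upper bound I would simply cite Lemma~\ref{a-invpoly}, which gives $-a(\KK[\P])\leq\max\{m,n\}$. Combining the two inequalities yields $-a(\KK[\P])=\max\{m,n\}$, completing the proof.

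At the level of this theorem there is therefore no remaining obstacle: the genuinely substantive step is the upper bound, and it has already been discharged in Lemma~\ref{a-invpoly}. It is worth recalling where the stack hypothesis enters there, since that is where the difficulty truly lives. Given disjoint directed cuts $\delta^+(T_1),\dots,\delta^+(T_p)$ with $T_i=T_i^x\cup T_i^y$, if some two of the $Y$-parts meet, then by Lemma~\ref{disj-cut} the corresponding $X$-parts cover $X$; because a stack polyomino always contains a full-height column $x$ with $N_Y(x)=Y$, one of the $T_i^y$ must equal $Y$, and the property $\{y_1,y_2\}\subset N_Y(x)$ for every $x\in X$ then forces all the $Y$-parts to overlap pairwise, so the nonempty complements $X\setminus T_i^x$ are pairwise disjoint and hence number at most $m$. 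If instead no two $Y$-parts meet, the $T_i^y$ are pairwise disjoint nonempty subsets of $Y$ and number at most $n$. Either way $p\leq\max\{m,n\}$, which is exactly the content I would rely on.
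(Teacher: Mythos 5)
Your proof is correct and is essentially identical to the paper's: the paper also obtains Theorem~\ref{p} by combining Proposition~\ref{a-inv} (the Valencia--Villarreal combinatorial formula), the explicit disjoint cuts of Remark~\ref{imp} for the lower bound, and Lemma~\ref{a-invpoly} for the upper bound. Your closing recapitulation of where the stack hypothesis enters Lemma~\ref{a-invpoly} also faithfully mirrors the paper's argument, so there is nothing to add.
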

\begin{Corollary}\label{regularity}
If $\P$ is a stack polyomino on $[m]\times[n]$, then \[\reg(\KK[\P])=\min\{m,n\}-1.\]
\end{Corollary}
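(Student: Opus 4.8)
The plan is to assemble the corollary directly from the material already established in this section, since essentially all of the substantive work has been done in Theorem~\ref{p}. The starting point is the identity
\[
\reg(\KK[\P]) = \dim(\KK[\P]) + a(\KK[\P]),
\]
which was recalled at the beginning of the section and holds precisely because $\KK[\P]$ is a Cohen–Macaulay ring. Thus the computation of the regularity reduces entirely to knowing the Krull dimension and the $a$-invariant of $\KK[\P]$, both of which are now available.

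First I would insert the dimension formula from \cite[Theorem 2.2]{Q}, namely $\dim(\KK[\P]) = m+n-1$. Next I would substitute the value of the $a$-invariant supplied by Theorem~\ref{p}, which gives $-a(\KK[\P]) = \max\{m,n\}$, hence $a(\KK[\P]) = -\max\{m,n\}$. Feeding both into the displayed identity yields
\[
\reg(\KK[\P]) = (m+n-1) - \max\{m,n\}.
\]

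To finish, I would simplify using the elementary identity $m+n-\max\{m,n\} = \min\{m,n\}$, valid for any pair of positive integers, which collapses the expression to $\reg(\KK[\P]) = \min\{m,n\}-1$, as claimed. I do not expect any real obstacle at this final stage: the only delicate point has already been resolved in the combinatorial determination of the $a$-invariant through the maximal number of disjoint directed cuts (Proposition~\ref{a-inv} together with Lemma~\ref{a-invpoly} and Remark~\ref{imp}, culminating in Theorem~\ref{p}). The present corollary is then a purely formal consequence of that computation, combined with the Cohen–Macaulay property and the dimension formula, so the proof amounts to the three substitutions indicated above.
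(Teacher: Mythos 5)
Your proof is correct and is exactly the argument the paper intends: the corollary follows from the identity $\reg(\KK[\P])=\dim(\KK[\P])+a(\KK[\P])$ (valid by Cohen--Macaulayness), the dimension formula $\dim(\KK[\P])=m+n-1$ from \cite[Theorem 2.2]{Q}, and Theorem~\ref{p}, together with $m+n-\max\{m,n\}=\min\{m,n\}$. The paper states the corollary without further proof precisely because it is this immediate substitution.
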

\begin{Example}
Let $\P$ be the stack polyomino of Figure~\ref{fig14}. The Catelnuovo-Mumford regularity of $\KK[\P]$ is $\reg(\KK[\P])=3$.
\begin{figure}
  \centering
  \begin{tikzpicture}[domain=0:10]
        \filldraw[fill=black!5!white, draw=black] (0,0) rectangle (1,1);
        \filldraw[fill=black!5!white, draw=black] (1,0) rectangle (2,1);
        \filldraw[fill=black!5!white, draw=black] (2,0) rectangle (3,1);
        \filldraw[fill=black!5!white, draw=black] (3,0) rectangle (4,1);
        \filldraw[fill=black!5!white, draw=black] (4,0) rectangle (5,1);
        \filldraw[fill=black!5!white, draw=black] (1,1) rectangle (2,2);
        \filldraw[fill=black!5!white, draw=black] (2,1) rectangle (3,2);
        \filldraw[fill=black!5!white, draw=black] (3,1) rectangle (4,2);
        \filldraw[fill=black!5!white, draw=black] (2,2) rectangle (3,3);
        \filldraw[fill=black!5!white, draw=black] (3,2) rectangle (4,3);
  \end{tikzpicture}
  \caption{}\label{fig14}
\end{figure}
\end{Example}

\section{The multiplicity of $\mathbb{K}[\P]$}

Let $\P$ be a stack polyomino on $[m]\times[n]$. The Hilbert series $H_{\mathbb{K}[\P]}(t)$ of $\mathbb{K}[\P]$ is given by $$H_{\mathbb{K}[\P]}(t)=\frac{Q(t)}{(1-t)^d},$$ where $Q(t)\in \mathbb{Z}[t]$ and $d$ is the Krull dimension of $\mathbb{K}[\P]$. According to \cite[Theorem 2.2]{Q}, $d=\dim(\mathbb{K}[\P])=m+n-1$. The multiplicity of $\mathbb{K}[\P]$, denoted $e(\mathbb{K}[\P])$, is given by $Q(1)$.

For every $i\in[m]$, we define the height of $i$ as \[\height(i)=\max\{j\in [n]\mid (i,j)\in V(\P)\}.\]
Following the proof of \cite[Theorem]{OTH}, we give a total order on the variables $x_{ij}$, with $(i,j)\in V(\P)$, as follows:
\begin{equation}\label{order}
x_{ij}>x_{kl}\text{ if and only if }
\end{equation}
\[(\height(i)>\height(k))\text{ or }(\height(i)=\height(k)\text{ and }i>k)\text{ or }(i=k \text{ and }j>l).\]
Let $<$ be the reverse lexicographical order induced by this order of variables. As we have already seen in the previous sections, the ideal $I_{\P}$ can be viewed as the toric ideal of the edge ring $\KK[G_{\P}]$, where $G_{\P}$ is the bipartite graph associated to $\P$. As it follows from the proof of \cite[Theorem]{OTH}, the reduced Gr\"obner basis of $I_{\P}$ with respect to $<$ consists of all $2$-inner minors of $\P$. In what follows, whenever we consider the Gr\"obner basis of $I_{\P}$, we assume that the variables $x_{ij}$, with $(i,j)\in V(\P)$ are totally order as in (\ref{order}).

We notice that $\ini_{<}(I_{\P})$ is a squarefree monomial ideal. Thus, we may view $\ini_<(I_{\P})$ as the Stanley-Reisner ideal of a simplicial complex on the vertex set $V(\P)$. Let $\Delta_{\P}$ denote this simplicial complex. It is known that $\Delta_{\P}$ is a pure shellable simplicial complex by \cite[Theorem 9.6.1]{V} and \cite[Theorem 9.5.10]{LRS}.

Let $f=(f_0,f_1,\cdots,f_{d-1})$ be the $f$-vector of $\Delta_{\P}$, where $d=m+n-1$.

We have \[H_{\mathbb{K}[\P]}(t)=H_{S/\ini_{<}(I_{\P})}(t)=H_{\KK[\Delta_{\P}]}.\]
By \cite[Lemma 5.1.8]{BH}, \[e(\KK[\P])= f_{d-1},\] that is, $e(\KK[\P])$ is equal to the number of facets of $\Delta_{\P}$. Thus, $e(\KK[\P])=|\mathfrak{F}(\Delta_{\P})|$, where $\mathfrak{F}(\Delta_{\P})$ denotes the set of the facets of $\Delta_{\P}$.
\begin{Example}
Let $\P$ be the polyomino of Figure~\ref{ex1}. We order the variables as follows $x_{23}>x_{22}>x_{21}>x_{13}>x_{12}>x_{11}>x_{32}>x_{31}$. Then with respect to the reverse lexicographical order induced by this order of variables, we have
\[\ini_{<}(I_{\P})=(x_{11}x_{32}, x_{21}x_{32}, x_{21}x_{12}, x_{21}x_{13}, x_{22}x_{13})\text{ and }\]
\[\Delta_{\P}=\langle F_1=\{(1,1), (2,1), (2,2), (2,3), (3,1)\};\]
\[F_2=\{(1,1), (1,2), (2,2), (2,3), (3,1)\}; F_3=\{(1,1), (1,2), (1,3), (2,3), (3,1)\};\]
\[F_4=\{(1,2), (2,2), (2,3), (3,1), (3,2)\}; F_5=\{(1,2), (1,3), (2,3), (3,1), (3,2)\}\rangle.\]
\begin{figure}
  \centering
  \begin{tikzpicture}[domain=0:10]
        \filldraw[fill=black!5!white, draw=black] (0,0) rectangle (1,1);
        \filldraw[fill=black!5!white, draw=black] (1,0) rectangle (2,1) ;
        \filldraw[fill=black!5!white, draw=black] (0,1) rectangle (1,2) ;
  \end{tikzpicture}
  \caption{}\label{ex1}
\end{figure}
\end{Example}

\begin{Definition}
Let $\Delta$ be a simplicial complex on the vertex set $V$ and $v\in V$. The link of $v$ in $\Delta$ is the simplicial complex
\[\lk(v)=\{F\in \Delta\mid v\notin F\text{ and }F\cup \{v\}\in \Delta\}\]
and the deletion of $v$ is the simplicial complex
\[\del(v)=\{F\in \Delta\mid v\notin F\}.\]
\end{Definition}

Let $x_{ij}$ be the smallest variable in $S$ with respect to $<$ and fix $v=(i,\height(i))\in V(\P)$. If $i=1$, then we denote by $\P_1$ the polyomino obtained from $\P$ by deleting the cell which contains the vertex $v$. Otherwise, $\P_1$ is given by deleting the cell which contains the vertex $(m,\height(m))$; see Figure~\ref{figg}.
\begin{figure}
  \centering
  \begin{tikzpicture}[domain=0:10]
        \filldraw[fill=black!5!white, draw=black] (1,1) rectangle (2,2);
        \filldraw[fill=black!5!white, draw=black] (1,2) rectangle (2,3);
        \filldraw[fill=black!5!white, draw=black] (1,3) rectangle (2,4);
        \filldraw[fill=black!5!white, draw=black] (1,6) rectangle (2,7);
        \filldraw[fill=black!5!white, draw=black] (2,1) rectangle (3,2);
        \filldraw[fill=black!5!white, draw=black] (2,2) rectangle (3,3);
        \filldraw[fill=black!5!white, draw=black] (2,6) rectangle (3,7);
        \filldraw[fill=black!5!white, draw=black] (2,7) rectangle (3,8);
        \filldraw[fill=black!5!white, draw=black] (3,1) rectangle (4,2);
        \filldraw[fill=black!5!white, draw=black] (3,2) rectangle (4,3);
        \filldraw[fill=black!5!white, draw=black] (3,6) rectangle (4,7);
        \filldraw[fill=black!5!white, draw=black] (3,7) rectangle (4,8);
        \filldraw[fill=black!5!white, draw=black] (3,8) rectangle (4,9);
        \filldraw[fill=black!5!white, draw=black] (4,1) rectangle (5,2);
        \filldraw[fill=black!5!white, draw=black] (4,2) rectangle (5,3);
        \filldraw[fill=black!5!white, draw=black] (4,6) rectangle (5,7);
        \filldraw[fill=black!5!white, draw=black] (4,7) rectangle (5,8);
        \filldraw[fill=black!5!white, draw=black] (6,1) rectangle (7,2);
        \filldraw[fill=black!5!white, draw=black] (6,2) rectangle (7,3);
        \filldraw[fill=black!5!white, draw=black] (6,3) rectangle (7,4);
        \filldraw[fill=black!5!white, draw=black] (6,6) rectangle (7,7);
        \filldraw[fill=black!5!white, draw=black] (6,7) rectangle (7,8);
        \filldraw[fill=black!5!white, draw=black] (7,1) rectangle (8,2);
        \filldraw[fill=black!5!white, draw=black] (7,2) rectangle (8,3);
        \filldraw[fill=black!5!white, draw=black] (7,6) rectangle (8,7);
        \filldraw[fill=black!5!white, draw=black] (7,7) rectangle (8,8);
        \filldraw[fill=black!5!white, draw=black] (7,8) rectangle (8,9);
        \filldraw[fill=black!5!white, draw=black] (8,1) rectangle (9,2);
        \filldraw[fill=black!5!white, draw=black] (8,2) rectangle (9,3);
        \filldraw[fill=black!5!white, draw=black] (8,6) rectangle (9,7);
        \filldraw[fill=black!5!white, draw=black] (8,7) rectangle (9,8);
        \filldraw[fill=black!5!white, draw=black] (9,1) rectangle (10,2);
        \filldraw (3, 4.5) circle (0 pt) node[above] {$\P$};
        \filldraw (3, 9.5) circle (0 pt) node[above] {$\P$};
        \filldraw (7.5, 4.5) circle (0 pt) node[above] {$\P_1$};
        \filldraw (7.5, 9.5) circle (0 pt) node[above] {$\P_1$};
        \filldraw (1, 7) circle (1 pt) node[above] {$v$};
        \filldraw (5, 3) circle (1 pt) node[above] {$v$};
  \end{tikzpicture}
  \caption{}\label{figg}
\end{figure}
\begin{Remark}\label{little}
Since $x_{i1}$ is the smallest variable with respect to $<$, we have $(i,1)\in F$ for every $F\in \Delta_{\P}$. Indeed $x_{i1}$ is regular on $S/\ini_{<}(I_{\P})$, thus it does not belong to any of the minimal primes of $\ini_<(I_{\P})$ which implies that $x_{i1}$ belongs to all the facets of $\Delta_{\P}$.
\end{Remark}

\begin{Lemma}\label{delta1}
We have $|\mathfrak{F}(\Delta_{\P_1})|=|\mathfrak{F}(\del(v))|$.
\end{Lemma}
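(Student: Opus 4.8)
My plan is to realize all three complexes as independence complexes of graphs and to count their maximal faces. Since the reduced Gr\"obner basis of $I_{\P}$ consists of all $2$-inner minors, the ideal $\ini_{<}(I_{\P})$ is generated by the squarefree quadrics $\ini_{<}(f)$, as $f$ runs over the $2$-inner minors. Hence $\Delta_{\P}$ is the independence complex of the graph $H$ on $V(\P)$ whose edges are these quadrics, so $|\mathfrak{F}(\Delta_{\P})|$ equals the number of maximal independent sets of $H$; the same holds for $\del(v)$, whose graph is $H$ with the vertex $v$ deleted, and for $\Delta_{\P_1}$, whose graph $H_1$ is built from the $2$-inner minors of $\P_1$. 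I would first record the elementary rule that, for the order (\ref{order}), $\ini_{<}(x_{il}x_{kj}-x_{ij}x_{kl})$ is the term omitting the $<$-smallest of the four corner variables $x_{ij},x_{il},x_{kj},x_{kl}$.

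The crux is the equivalence: a $2$-inner minor $f$ has $v$ as a corner if and only if $x_v\mid\ini_{<}(f)$; equivalently, the edges of $H$ at $v$ come exactly from the minors having $v$ as a corner. One direction is trivial. For the other, recall that $v=(i,\height(i))$ is the top vertex of column $i$, which is of minimum height and of smallest index among the minimum-height columns, this being why $v$ is attached to the smallest variable $x_{i1}$. If $f$ has $v$ as a corner, then its opposite corner in column $i$ is a vertex $(i,j)$ with $j<\height(i)$, and $x_{ij}$ is the $<$-smallest of the four corners: the other column $k$ occurring in $f$ either has larger index than $i$ (so column $i$ wins the index tie-break), or smaller index, in which case it is not of minimum height (so column $i$ wins on height). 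As $(i,j)$ lies in the term opposite to $x_v$, the variable $x_v$ survives into $\ini_{<}(f)$. Thus the edges of the graph $H\setminus v$ of $\del(v)$ are precisely the initial terms of the $2$-inner minors \emph{not} involving $v$.

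When $i=1$ the deleted cell is the top cell of column $1$; an inner interval contains it precisely when it has $v$ as a corner, so the $2$-inner minors of $\P_1$ are exactly those of $\P$ not involving $v$, and (away from the degeneracies $\height(1)=2$ and $m=2$, which only create further isolated vertices) deleting the cell removes only $v$. Hence $H_1=H\setminus v$ and the two maximal-independent-set counts coincide. The case $i\neq 1$ is the real obstacle: the minimum-height columns now form a block $i,\dots,m$ on the right, the deleted cell is the top cell of column $m$ with top vertex $w=(m,\height(m))$, and $\Delta_{\P_1}$ is governed by the minors avoiding $w$ while $\del(v)$ is governed by those avoiding $v$, with $v\neq w$ as soon as the block has length $\geq 2$. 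The plan is to exhibit a graph isomorphism between $H\setminus v$ and $H_1$ after deleting isolated vertices (which, as in Remark~\ref{little}, lie in every facet and hence do not affect the count), given by shifting the active vertices of the minimum-height block one column to the left, thereby trading the removal of column $i$'s top vertex for the removal of column $m$. Verifying that this shift carries edges to edges uniformly in the block length, and handling the degenerate case $\height(m)=2$ in which deleting the cell erases the entire last column rather than a single vertex, is the main point that must be checked.
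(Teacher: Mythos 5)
Your framework is sound and your ``crux'' lemma is both correct and genuinely useful: with the order (\ref{order}), if $v=(i,\height(i))$ is a corner of a $2$-inner minor $f$, then the corner directly below $v$ is the $<$-smallest of the four corner variables (the other column of $f$ is either taller than column $i$, or of equal height and larger index), so $x_v$ divides $\ini_<(f)$. Consequently the edges of the graph of $\del(v)$ are precisely the initial terms of the minors not having $v$ as a corner. This settles $i=1$, where the $2$-inner minors of $\P_1$ are exactly those of $\P$ avoiding $v$ and the order attached to $\P_1$ agrees with the restriction of the order attached to $\P$ (column $1$ remains the unique bottom column of the order); indeed this cleanly explains the paper's observation that for $i=1$ one has $\mathfrak{F}(\Delta_{\P_1})=\mathfrak{F}(\del(v))$ on the nose.

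But for $i\neq 1$ --- which is the real content of the lemma --- you only announce the plan and explicitly defer ``the main point that must be checked.'' That deferred verification \emph{is} the paper's proof: the shift you describe (column $i$ to column $m$, columns $i+1,\dots,m$ one step to the left) is exactly the paper's Algorithm~\ref{alg1}, with Algorithm~\ref{alg2} as its inverse, and what has to be shown is (a) that this shift is an order isomorphism from $(V(\P)\setminus\{v\},<_{\P})$ onto $(V(\P_1),<_{\P_1})$ --- here one uses that columns $i,\dots,m$ all have height exactly $\height(i)$, and that in $\P_1$ column $m$, now of height $\height(i)-1$, drops to the bottom of the order, precisely where column $i$ sat in $\P$; (b) that it carries inner intervals of $\P$ avoiding $v$ bijectively onto inner intervals of $\P_1$ (using that in a stack polyomino an interval is inner if and only if its two top corners are vertices); and hence (c) that it matches initial terms with initial terms, i.e., is a graph isomorphism, so maximal independent sets correspond and the facet counts agree. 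None of (a)--(c) is carried out, and the degenerate case $\height(i)=2$, where deleting the cell destroys an entire column and the comparison must pass through a cone such as $(m,1)*\Delta_{\P_1}$ (equivalently, through isolated vertices), is flagged but not handled. So the proposal is the right strategy --- essentially the paper's bijection repackaged as a graph isomorphism, plus a nice lemma the paper leaves implicit --- but it stops short of the verification that constitutes the proof of the main case.
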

\begin{proof}
To begin with, let us consider $\height(i)\geq 3$. If $F\in\mathfrak{F}(\del(v))$, then we obtain $F'\in\mathfrak{F}(\Delta_{\P_1})$ by Algorithm~\ref{alg1}.
\alglanguage{pascal}
\begin{algorithm}
\caption{}\label{alg1}
\begin{algorithmic}[1]
\Begin
\State $F':=F$;
\State $h:=\height(i)$;
\If{i\neq 1}
\Begin
    \For{k=1}{h}
    \Begin
        \If{(m,k)\in F}
            \State $F':=F'\setminus\{(m,k)\}$;
        \If{(i,k)\in F}
            \State $F':=(F'\setminus \{(i,k)\})\cup\{(m,k)\}$;
    \End
    \For{j=i+1}{m-1}
        \For{k=1}{h}
            \If{(j,k)\in F}
                \State $F':=(F'\setminus\{(j,k)\})\cup\{(j-1,k)\}$;
    \For{k=1}{h}
        \If{(m,k)\in F}
            \State $F':=F'\cup\{(m-1,k)\}$;
\End
\End
\end{algorithmic}
\end{algorithm}

Indeed, if $F\in \mathfrak{F}(\del(v))$ and $i\neq 1$, then $v\notin F$ and $|F|=m+n-1$. By applying the first "For" loop from Algorithm~\ref{alg1}, we already obtain $(m,\height(m))\notin F'$ and so $F'\subset V(\P_1)$. Next we apply the following "For" loops. Since $F'$ is obtained from $F$ by a circular permutation of the vertices of $F$ which have the $x$-coordinate greater than $i$, we get $|F|=|F'|=m+n-1=\dim \Delta_{\P_1}+1$.

For example, in the polyomino $\P$ of Figure~\ref{figa}, $v=(3,3)$. We illustrate all the "For" loops of Algortihm~\ref{alg1} in Figure~\ref{figa} for the facet
\[F=\{(1,3),(1,4),(2,4),(3,1),(3,2),(4,2),(4,3),(5,3),(6,3)\}\in \del(v).\] Following the algorithm, \[F'=\{(1,3),(1,4),(2,4),(3,2),(3,3),(4,3),(5,3),(6,1),(6,2)\}.\]
\begin{figure}
  \centering
  \begin{tikzpicture}[domain=1:13]
        \filldraw[fill=black!5!white, draw=black] (1,1) rectangle (2,2);
        \filldraw[fill=black!5!white, draw=black] (1,2) rectangle (2,3);
        \filldraw[fill=black!5!white, draw=black] (1,3) rectangle (2,4);
        \filldraw[fill=black!5!white, draw=black] (2,1) rectangle (3,2);
        \filldraw[fill=black!5!white, draw=black] (2,2) rectangle (3,3);
        \filldraw[fill=black!5!white, draw=black] (3,1) rectangle (4,2);
        \filldraw[fill=black!5!white, draw=black] (3,2) rectangle (4,3);
        \filldraw[fill=black!5!white, draw=black] (4,1) rectangle (5,2);
        \filldraw[fill=black!5!white, draw=black] (4,2) rectangle (5,3);
        \filldraw[fill=black!5!white, draw=black] (5,1) rectangle (6,2);
        %\filldraw[fill=black!0!white, draw=black] (5,2) rectangle (6,3);
        \draw[-,black, dashed] (5,3) -- (6,3) -- (6,2);
        \filldraw[fill=black!5!white, draw=black] (8,1) rectangle (9,2);
        \filldraw[fill=black!5!white, draw=black] (8,2) rectangle (9,3);
        \filldraw[fill=black!5!white, draw=black] (8,3) rectangle (9,4);
        \filldraw[fill=black!5!white, draw=black] (9,1) rectangle (10,2);
        \filldraw[fill=black!5!white, draw=black] (9,2) rectangle (10,3);
        \filldraw[fill=black!5!white, draw=black] (10,1) rectangle (11,2);
        \filldraw[fill=black!5!white, draw=black] (10,2) rectangle (11,3);
        \filldraw[fill=black!5!white, draw=black] (11,1) rectangle (12,2);
        \filldraw[fill=black!5!white, draw=black] (11,2) rectangle (12,3);
        \filldraw[fill=black!5!white, draw=black] (12,1) rectangle (13,2);
        %\filldraw[fill=black!0!white, draw=black] (12,2) rectangle (13,3);
        \draw[-,black, dashed] (12,3) -- (13,3)--(13,2);
        \filldraw[fill=black!5!white, draw=black] (1,6) rectangle (2,7);
        \filldraw[fill=black!5!white, draw=black] (1,7) rectangle (2,8);
        \filldraw[fill=black!5!white, draw=black] (1,8) rectangle (2,9);
        \filldraw[fill=black!5!white, draw=black] (2,6) rectangle (3,7);
        \filldraw[fill=black!5!white, draw=black] (2,7) rectangle (3,8) circle (0 pt) node[above] {$v$};
        \filldraw[fill=black!5!white, draw=black] (3,6) rectangle (4,7);
        \filldraw[fill=black!5!white, draw=black] (3,7) rectangle (4,8);
        \filldraw[fill=black!5!white, draw=black] (4,6) rectangle (5,7);
        \filldraw[fill=black!5!white, draw=black] (4,7) rectangle (5,8);
        \filldraw[fill=black!5!white, draw=black] (5,6) rectangle (6,7);
        \filldraw[fill=black!5!white, draw=black] (5,7) rectangle (6,8);
        \filldraw[fill=black!5!white, draw=black] (8,6) rectangle (9,7);
        \filldraw[fill=black!5!white, draw=black] (8,7) rectangle (9,8);
        \filldraw[fill=black!5!white, draw=black] (8,8) rectangle (9,9);
        \filldraw[fill=black!5!white, draw=black] (9,6) rectangle (10,7);
        \filldraw[fill=black!5!white, draw=black] (9,7) rectangle (10,8);
        \filldraw[fill=black!5!white, draw=black] (10,6) rectangle (11,7);
        \filldraw[fill=black!5!white, draw=black] (10,7) rectangle (11,8);
        \filldraw[fill=black!5!white, draw=black] (11,6) rectangle (12,7);
        \filldraw[fill=black!5!white, draw=black] (11,7) rectangle (12,8);
        \filldraw[fill=black!5!white, draw=black] (12,6) rectangle (13,7);
        %\filldraw[fill=black!0!white, draw=black] (12,7) rectangle (13,8);
        \draw[-,black, dashed] (12,8) -- (13,8) -- (13,7);
        %\filldraw (0, 4) circle (1 pt) node[above] {$\P_1$};
        \filldraw (1, 8) circle (2 pt);
        \filldraw (1, 9) circle (2 pt);
        \filldraw (2, 9) circle (2 pt);
        \filldraw (3, 6) circle (2 pt);
        \filldraw (3, 7) circle (2 pt);
        \filldraw (4, 7) circle (2 pt);
        \filldraw (4, 8) circle (2 pt);
        \filldraw (5, 8) circle (2 pt);
        \filldraw (6, 8) circle (2 pt);
        \filldraw (8, 8) circle (2 pt);
        \filldraw (8, 9) circle (2 pt);
        \filldraw (9, 9) circle (2 pt);
        \filldraw (11, 7) circle (2 pt);
        \filldraw (11, 8) circle (2 pt);
        \filldraw (12, 8) circle (2 pt);
        \filldraw (13, 7) circle (2 pt);
        \filldraw (13, 6) circle (2 pt);
        \filldraw (1, 3) circle (2 pt);
        \filldraw (1, 4) circle (2 pt);
        \filldraw (2, 4) circle (2 pt);
        \filldraw (3, 2) circle (2 pt);
        \filldraw (3, 3) circle (2 pt);
        \filldraw (4, 3) circle (2 pt);
        \filldraw (6, 1) circle (2 pt);
        \filldraw (6, 2) circle (2 pt);
        \filldraw (8, 3) circle (2 pt);
        \filldraw (8, 4) circle (2 pt);
        \filldraw (9, 4) circle (2 pt);
        \filldraw (10, 2) circle (2 pt);
        \filldraw (10, 3) circle (2 pt);
        \filldraw (11, 3) circle (2 pt);
        \filldraw (12, 3) circle (2 pt);
        \filldraw (13, 1) circle (2 pt);
        \filldraw (13, 2) circle (2 pt);
        \filldraw (3, 9.5) circle (0 pt) node[above] {$\P$};
        \filldraw (10, 9.5) circle (0 pt) node[above] {The first "For" loop};
        \filldraw (3, 4.5) circle (0 pt) node[above] {The second "For" loop};
        \filldraw (10, 4.5) circle (0 pt) node[above] {The third "For" loop};
  \end{tikzpicture}
  \caption{}\label{figa}
\end{figure}

Now, we observe that even if the order of the variables for $\P_1$ is not induced by the order of the variables of $\P$, the generators of $\ini_<(I_{\P_1})$ are exactly those which appear in $\ini_<(I_{\P})$. Therefore, we may conclude that $F'\in\Delta_{\P_1}$ and so $F'\in \mathfrak{F}(\Delta_{\P_1})$.

In the case that $i=1$, we notice that $F=F'$ and $\mathfrak{F}(\Delta_{\P_1})=\mathfrak{F}(\del(v))$. In fact, if $F\in \mathfrak{F}(\Delta_{\P_1})$, then $v\notin F$ and $|F|=m+n-1$. Since $F\in\del(v)$ and $\dim\Delta_{\P}=m+n-2$, it follows that $F\in \mathfrak{F}(\del(v))$. For the other inclusion, let $F\in \mathfrak{F}(\del(v))$. Then $F\in\Delta_{\P_1}$. Since $\dim S/I_{\P_1}=m+n-1$, it follows that $F\in \mathfrak{F}(\Delta_{\P_1})$.

Therefore, we have shown that every facet $F$ of $\del(v)$ determines uniquely a facet $F'$ in $\Delta_{\P_1}$, if $\height(i)\geq 3$.

Conversely, let $F'$ be a facet in $\Delta_{\P_1}$. Following the steps of Algorithm~\ref{alg1} in reverse order, we obtain $F$ a facet in $\del(v)$. Algorithm~\ref{alg2} gives explicitly all the steps of getting $F$ from $F'$.
\alglanguage{pascal}
\begin{algorithm}
\caption{}\label{alg2}
\begin{algorithmic}[1]
\Begin
\State $F:=F'$;
\State $h:=\height(i)$;
\If{i\neq 1}
\Begin
    \For{k=1}{h}
        \If{(m-1,k)\in F'}
            \State $F:=F\setminus\{(m-1,k)\}$;
    \If{i\leq m-2}
    \For{j=m-2}{i}
        \For{k=1}{h}
            \If{(j,k)\in F'}
                \State $F:=(F\setminus\{(j,k)\})\cup\{(j+1,k)\}$;
    \For{k=1}{h}
    \Begin
        \If{(m,k)\in F'}
            \State $F:=(F\setminus\{(m,k)\})\cup\{(i,k)\}$;
        \If{(m-1,k)\in F'}
            \State $F:=F\cup\{(m,k)\}$;
    \End
\End
\End
\end{algorithmic}
\end{algorithm}

We thus get $|\mathfrak{F}(\Delta_{\P_1})|=|\mathfrak{F}(\del(v))|$ if $\height(i)\geq 3$. Moreover, we have equality between the sets $\mathfrak{F}(\Delta_{\P_1})$ and $\mathfrak{F}(\del(v))$ if and only if $i=1$ and $\height(i)\geq 3$.

In order to complete the proof, let us point out that the same two algorithms work for $\height(i)=2$. In fact, for $i>1$ (respectively $i=1$), $F$ is a facet in $\del(v)$ if and only if $F'$ is a facet in the cone $(m,1)*\Delta_{\P_1}$ (respectively $(1,1)*\Delta_{\P_1}$).

For example, if we consider the polyomino $\P$ of Figure~\ref{figb} and $v=(3,2)\in V(\P)$, then if we choose
\[F=\{(1,2),(1,3),(1,4),(2,4),(3,1),(4,1),(4,2),(5,2)\}\in \mathfrak{F}(\del(v)),\] we obtain
\[F'=\{(1,2),(1,3),(1,4),(2,4),(3,1),(3,2),(4,2),(5,1)\}\] a facet in $(5,1)*\Delta_{\P_1}$.
\begin{figure}
  \centering
  \begin{tikzpicture}[domain=1:13]
        \filldraw[fill=black!5!white, draw=black] (1,1) rectangle (2,2);
        \filldraw[fill=black!5!white, draw=black] (1,2) rectangle (2,3);
        \filldraw[fill=black!5!white, draw=black] (1,3) rectangle (2,4);
        \filldraw[fill=black!5!white, draw=black] (2,1) rectangle (3,2) circle (0 pt) node[above] {$v$};
        \filldraw[fill=black!5!white, draw=black] (3,1) rectangle (4,2);
        \filldraw[fill=black!5!white, draw=black] (4,1) rectangle (5,2);
        \filldraw[fill=black!5!white, draw=black] (6,1) rectangle (7,2);
        \filldraw[fill=black!5!white, draw=black] (6,2) rectangle (7,3);
        \filldraw[fill=black!5!white, draw=black] (6,3) rectangle (7,4);
        \filldraw[fill=black!5!white, draw=black] (7,1) rectangle (8,2);
        \filldraw[fill=black!5!white, draw=black] (8,1) rectangle (9,2);
        \filldraw[fill=black!0!white, draw=black, dashed] (9,1) rectangle (10,2);
        \filldraw (3, 4.5) circle (0 pt) node[above] {$\P$};
        \filldraw (8, 4.5) circle (0 pt) node[above] {$\P_1$};
        \filldraw (1, 2) circle (2 pt);
        \filldraw (1, 3) circle (2 pt);
        \filldraw (1, 4) circle (2 pt);
        \filldraw (2, 4) circle (2 pt);
        \filldraw (3, 1) circle (2 pt);
        \filldraw (4, 1) circle (2 pt);
        \filldraw (4, 2) circle (2 pt);
        \filldraw (5, 2) circle (2 pt);
        \filldraw (6, 2) circle (2 pt);
        \filldraw (6, 3) circle (2 pt);
        \filldraw (6, 4) circle (2 pt);
        \filldraw (7, 4) circle (2 pt);
        \filldraw (8, 1) circle (2 pt);
        \filldraw (8, 2) circle (2 pt);
        \filldraw (9, 2) circle (2 pt);
        \filldraw (10, 1) circle (2 pt);
        \draw[-,black] (9,2) -- (9,1);
  \end{tikzpicture}
  \caption{}\label{figb}
\end{figure}
\end{proof}
%\begin{Remark}
%Let us point out that $\del(v)$ represents the cone $(i,1)*\Delta_{\P_1}$, if $v=(i,2)$.
%\end{Remark}
Let $\P_2$ be the polyomino obtained from $\P$ by deleting all the cells of $\P$ which lie below the horizontal edge interval containing the vertex $v$.
\begin{Lemma}\label{delta2}
We have $|\mathfrak{F}(\Delta_{\P_2})|=|\mathfrak{F}(\lk(v))|$.
\end{Lemma}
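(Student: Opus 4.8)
The plan is to reduce the statement to a count of the facets of $\Delta_{\P}$ that pass through $v$, and then to recognise $\lk(v)$ as a cone over $\Delta_{\P_2}$. First I would record the elementary fact that for any simplicial complex $\Delta$ and any vertex $v$, the map $F\mapsto F\setminus\{v\}$ is a bijection from $\{F\in\mathfrak{F}(\Delta)\mid v\in F\}$ onto $\mathfrak{F}(\lk(v))$: a facet containing $v$ restricts to a maximal face of $\lk(v)$, and conversely every facet of $\lk(v)$ extends uniquely to a facet of $\Delta$ by adjoining $v$. Hence $|\mathfrak{F}(\lk(v))|$ equals the number of facets of $\Delta_{\P}$ containing $v$, and it suffices to biject those with $\mathfrak{F}(\Delta_{\P_2})$.

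Next I would describe which faces survive in the link by localising the initial ideal. A pair $\{u,w\}$ is a minimal non-face of $\Delta_{\P}$ exactly when $x_ux_w$ is the leading term of the $2$-minor $x_{ad}x_{cb}-x_{ab}x_{cd}$ of an inner interval $[(a,b),(c,d)]$ with $a<c$ and $b<d$. A direct computation with the reverse lexicographic order attached to (\ref{order}) shows that this leading term is the antidiagonal monomial $x_{ad}x_{cb}$ when $\height(a)\le\height(c)$, and the diagonal monomial $x_{ab}x_{cd}$ when $\height(a)>\height(c)$. Writing $v=(i,\height(i))$ with $x_{i1}$ the smallest variable, so that $\height(i)$ is minimal among all columns, I would use this to list the set $E_v$ of vertices $u$ for which $\{u,v\}$ is a minimal non-face; these all lie strictly below the horizontal edge interval through $v$, i.e. they belong to the cells deleted to form $\P_2$. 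Since $v\in F$ forbids every vertex of $E_v$, each facet $F$ of $\Delta_{\P}$ containing $v$ lies in $\{v\}\cup V'$, where $V':=V(\P)\setminus(\{v\}\cup E_v)$ is the vertex set of $\lk(v)$.

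Then I would analyse $V'$ and show $V'=V(\P_2)\sqcup C$, where $C$ gathers the leftover below-cut vertices (and, in the flat case, certain on-cut vertices), and that every vertex of $C$ is a cone point of $\lk(v)$: the only minimal non-faces meeting such a vertex pair it with a vertex already placed in $E_v$, so no $2$-minor constrains it inside $V'$. By Remark~\ref{little} the cone point $(i,1)$ furnished by the smallest variable is of this type, consistently. Finally I would verify that the subcomplex of $\lk(v)$ induced on $V(\P_2)$ is exactly $\Delta_{\P_2}$: a minimal non-face with both corners above the cut is the leading term of a $2$-minor of $\P_2$, because a vertical shift of the cut preserves the relative order of the column heights and hence the outcome of the comparison $\height(a)\le\height(c)$ that selects the leading term, just as in the corresponding remark in the proof of Lemma~\ref{delta1}. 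Thus $\lk(v)=C*\Delta_{\P_2}$ is a join of a full simplex with $\Delta_{\P_2}$, and coning leaves the number of facets unchanged, giving $|\mathfrak{F}(\lk(v))|=|\mathfrak{F}(\Delta_{\P_2})|$.

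The main obstacle will be the exact bookkeeping of $E_v$ and $C$. Their precise shapes depend on whether $v$ sits over the leftmost or the rightmost column (the $i=1$ versus $i\ne 1$ dichotomy already present in Lemma~\ref{delta1}) and on whether the minimal-height column is isolated or part of a flat stretch; checking in every case that the leftover vertices are genuine cone points and that the residual complex is $\Delta_{\P_2}$ itself — not merely equinumerous with it — is where the care is needed, and it is conceivable that, as in Lemma~\ref{delta1}, the cleanest presentation replaces this global description by an explicit facet-to-facet algorithm together with its inverse.
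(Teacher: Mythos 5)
Your proposal is correct and follows essentially the same route as the paper: the paper's proof writes every facet of $\Delta_{\P}$ containing $v$ as $G_1\cup G_2$, where $G_2=\{(a,j)\mid (a,j)\in V(\P)\setminus V(\P_2)\}\cup\{(i,j-1),\ldots,(i,1)\}$ is exactly your set $\{v\}\cup C$ of forced (cone) vertices and $G_1$ is a facet of $\Delta_{\P_2}$, i.e.\ precisely your join decomposition of $\lk(v)$ as a simplex on $C$ joined with $\Delta_{\P_2}$. The only notable difference is that the paper verifies maximality of $G_1$ in $\Delta_{\P_2}$ by a short cardinality count relying on purity of the complexes, rather than by your explicit bookkeeping of leading terms of the $2$-minors.
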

\begin{proof}
  Let $F$ be a facet in $\lk(v)$. Then $F\cup\{v\}\in \mathfrak{F}(\Delta_{\P})$. In this proof, we set $j=\height(i)$.

  Suppose that $F\cup \{v\}=G_1\cup G_2$ where $G_1\in \Delta_{P_2}$ and $G_2=\{(a,j)\mid (a,j)\in V(\P)\setminus V(\P_2)\}\cup \{(i,j-1),\ldots, (i,1)\}$. In fact, since $v\in F\cup \{v\}$, all the vertices of $G_2$ must belong to $F\cup\{v\}$ and $x_{ij}x_{kl}\in \ini_{<}(I_{\P})$, for every $(k,l)\in V(\P)\setminus G_2$ with $l<j$.

  In order to prove that $G_1\in\mathfrak{F}(\Delta_{\P_2})$, it is enough to show that $|G_1|=\dim \Delta_{\P_2}+1$. We consider the polyomino $\P_2$ to be on $[m-t]\times[n-j+1]$, for some $t\geq 1$. It follows that $|F\cup \{v\}|=|G_1\cup G_2|=|G_1|+|G_2|\leq (m-t+n-j+1-1)+(t+j-1)=m+n-1$, which implies that $|G_1|=(m-t)+(n-j+1)-1=\dim \Delta_{\P_2}+1$. Therefore, $G_1\in\mathfrak{F}(\Delta_{\P_2})$.

  We consider $G$ to be a facet in $\Delta_{\P_2}$. By definition of $\P_2$, $v\notin G$ and $G\cup\{v\}\in \Delta_{\P}$. In other words, $G\in \lk(v)$ and there exists $F\in \mathfrak{F}(\lk(v))$ such that $G\subset F$. Thus, $F\cup\{v\}\in \mathfrak{F}(\Delta_{\P})$.  Moreover, $F\cup\{v\}=G\cup G_2$.
\end{proof}
%\begin{Remark}\label{formultipl}
%If $F\in\mathfrak{F}(\del(v))$, then $|F|=m+n-1$ and $F\in\mathfrak{F}(\Delta_{\P})$.

%If $F\in\mathfrak{F}(\lk(v))$, then $F\cup\{v\}\in\mathfrak{F}(\Delta_{\P})$ and $|F|<m+n-1$.
%\end{Remark}
We now prove the main result of this section.

\begin{Theorem}\label{mult}
Let $\P$ be a stack polyomino on $[m]\times [n]$ and $v=(i,j)\in V(\P)$ with the properties:
\begin{enumerate}
  \item $x_{i1}$ is the smallest variable in $S$ and
  \item $j=\height(i)$.
\end{enumerate}
We consider $\P_1$ and $\P_2$ to be the following polyominoes:
\begin{enumerate}
  \item $\P_1$ is the polyomino obtained from $\P$ by deleting the cell which contains the vertex $v$, if $i=1$. Otherwise, $\P_1$ is given by deleting the cell of $\P$ which contains the vertex $(m,\height(m))$.
  \item $\P_2$ is the polyomino obtained from $\P$ be deleting all the cells of $\P$ which lie below the horizontal edge interval containing the vertex $v$.
\end{enumerate}
Then
\[e(\KK[\P])=e(\KK[\P_1])+e(\KK[\P_2]).\]
\end{Theorem}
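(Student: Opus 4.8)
The plan is to reduce the whole statement to a count of facets and then to split the facets of $\Delta_{\P}$ according to whether or not they contain the distinguished vertex $v=(i,\height(i))$. Recall from the discussion preceding the theorem that $e(\KK[\P])=|\mathfrak{F}(\Delta_{\P})|$, and likewise $e(\KK[\P_1])=|\mathfrak{F}(\Delta_{\P_1})|$ and $e(\KK[\P_2])=|\mathfrak{F}(\Delta_{\P_2})|$. Since Lemma~\ref{delta1} gives $|\mathfrak{F}(\Delta_{\P_1})|=|\mathfrak{F}(\del(v))|$ and Lemma~\ref{delta2} gives $|\mathfrak{F}(\Delta_{\P_2})|=|\mathfrak{F}(\lk(v))|$, the asserted formula is equivalent to the purely combinatorial identity
\[|\mathfrak{F}(\Delta_{\P})|=|\mathfrak{F}(\del(v))|+|\mathfrak{F}(\lk(v))|.\]
So the entire argument comes down to proving this identity for the vertex $v$.

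First I would dispose of the facets through $v$. The assignment $F\mapsto F\setminus\{v\}$ is a bijection from $\{F\in\mathfrak{F}(\Delta_{\P})\mid v\in F\}$ onto $\mathfrak{F}(\lk(v))$: by definition $F\setminus\{v\}\in\lk(v)$, and if $F\setminus\{v\}\subsetneq G$ for some $G\in\lk(v)$ then $G\cup\{v\}$ would be a face of $\Delta_{\P}$ strictly containing the facet $F$, which is impossible; conversely every facet of $\lk(v)$ arises this way by purity. Hence the number of facets of $\Delta_{\P}$ containing $v$ equals $|\mathfrak{F}(\lk(v))|$. On the other hand, any facet $F$ of $\Delta_{\P}$ with $v\notin F$ lies in $\del(v)$ and is clearly maximal there, so $\{F\in\mathfrak{F}(\Delta_{\P})\mid v\notin F\}\subseteq\mathfrak{F}(\del(v))$. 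Thus the displayed identity will follow once I show this inclusion is an equality, i.e. that \emph{no} facet of $\del(v)$ can be enlarged inside $\Delta_{\P}$ by adjoining $v$.

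The crux, and the main obstacle, is therefore to prove that $\del(v)$ is pure of dimension $m+n-2$, the same dimension as $\Delta_{\P}$. Because $\Delta_{\P}$ is pure shellable, every maximal face of $\del(v)$ has either $m+n-1$ or $m+n-2$ vertices, the latter occurring precisely when some facet $F\ni v$ has $F\setminus\{v\}$ maximal in $\del(v)$; purity of $\del(v)$ is exactly the assertion that this never happens. Equivalently, I must show that for every facet $F$ of $\Delta_{\P}$ with $v\in F$ there is a vertex $w\neq v$ with $(F\setminus\{v\})\cup\{w\}\in\Delta_{\P}$, which exhibits $F\setminus\{v\}$ as a non-maximal face of $\del(v)$. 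I note that this purity is in fact already recorded inside the proof of Lemma~\ref{delta1}, where the facets of $\del(v)$ are produced with exactly $m+n-1$ vertices; so at a minimum the needed statement can be cited from there.

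To prove the extension property directly I would exploit the special position of $v$. Since $x_{i1}$ is the smallest variable for the chosen term order, $v=(i,\height(i))$ is the top vertex of the lowest (and, among those, leftmost) column of $\P$, and by Remark~\ref{little} the vertex $(i,1)$ belongs to every facet, in particular to $F$. Deleting $v$ frees the top of a shortest column, and using the explicit description of the minimal non-faces of $\Delta_{\P}$ as the leading monomials of the $2$-minors (the diagonal/anti-diagonal conditions), together with the row- and column-convexity of $\P$, one locates a vertex $w\notin F$ in a suitably taller column whose adjunction violates none of these non-face conditions; hence $(F\setminus\{v\})\cup\{w\}$ is still a face. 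Once this is established the inclusion becomes an equality, the displayed identity holds, and combining it with Lemmas~\ref{delta1} and~\ref{delta2} yields $e(\KK[\P])=e(\KK[\P_1])+e(\KK[\P_2])$.
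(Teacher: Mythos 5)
Your proposal follows the same route as the paper's own proof: reduce the multiplicity identity to the facet count $|\mathfrak{F}(\Delta_{\P})|=|\mathfrak{F}(\del(v))|+|\mathfrak{F}(\lk(v))|$, split the facets of $\Delta_{\P}$ according to whether they contain $v$, and finish by invoking Lemmas~\ref{delta1} and~\ref{delta2}. The one place you diverge is a matter of rigor, and it is in your favor. The paper's proof writes only ``if $v\in F$ then $F\setminus\{v\}\in\mathfrak{F}(\lk(v))$; otherwise $F\in\mathfrak{F}(\del(v))$'' and immediately concludes the count; this tacitly assumes that \emph{every} facet of $\del(v)$ is a facet of $\Delta_{\P}$ avoiding $v$, i.e.\ that $\del(v)$ is pure of dimension $m+n-2$ (equivalently, that $v$ is a shedding vertex: no facet of $\lk(v)$ is maximal in $\del(v)$). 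Without this, $|\mathfrak{F}(\del(v))|$ could strictly exceed the number of facets of $\Delta_{\P}$ not containing $v$ and the right-hand side would overcount. You correctly isolate this as the crux. The paper itself never proves it: the assertion that $F\in\mathfrak{F}(\del(v))$ forces $|F|=m+n-1$ is stated without justification inside the proof of Lemma~\ref{delta1}, so your fallback of citing that lemma is exactly as well-founded as the paper's own argument, and is legitimate within the paper's logical structure since the lemma precedes the theorem. Your proposed direct proof of purity (finding $w\notin F$ in a taller column so that $(F\setminus\{v\})\cup\{w\}\in\Delta_{\P}$, using the leading terms of the $2$-minors and convexity) is only a sketch --- the combinatorial verification is not carried out --- but since your counting skeleton and the appeal to the two lemmas reproduce the paper's argument, the proposal is correct at, indeed slightly above, the paper's own standard of completeness.
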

\begin{proof}
  In order to prove the equality, it is sufficient to show that \[|\mathfrak{F}(\Delta_{\P})|=|\mathfrak{F}(\Delta_{\P_1})|+|\mathfrak{F}(\Delta_{\P_2})|.\]
  We consider $F$ to be a facet in $\Delta_{\P}$. If $v\in F$, then $F\setminus\{v\}\in\mathfrak{F}(\lk(v))$. Otherwise, $v\notin F$, thus $F\in\mathfrak{F}(del(v))$. Therefore, we obtain $|\mathfrak{F}(\Delta_{\P})|=|\mathfrak{F}(\lk(v))|+|\mathfrak{F}(\del(v)|$. The proof is completed by applying Lemma~\ref{delta1} and Lemma~\ref{delta2}.
\end{proof}
\begin{Example}
Let $\P$ be the stack polyomino of Figure~\ref{ex3}. Then the multiplicity of $\KK[\P]$ is equal to $14$. The first step in the recursive formula, namely $e(\KK[\P])=e(\KK[\P_1])+e(\KK[\P_2])$, is shown in the figure. Next we apply the recursive procedure for each of the polyominoes $\P_1$ and $\P_2$.
\begin{figure}
  \centering
  \begin{tikzpicture}[domain=0:11]
        \filldraw[fill=black!5!white, draw=black] (0,7) rectangle (1,8);
        \filldraw[fill=black!5!white, draw=black] (0,8) rectangle (1,9);
        \filldraw[fill=black!5!white, draw=black] (1,7) rectangle (2,8);
        \filldraw[fill=black!5!white, draw=black] (1,8) rectangle (2,9);
        \filldraw[fill=black!5!white, draw=black] (1,9) rectangle (2,10);
        \filldraw[fill=black!5!white, draw=black] (2,7) rectangle (3,8);
        \filldraw[fill=black!5!white, draw=black] (5,7) rectangle (6,8);
        \filldraw[fill=black!5!white, draw=black] (5,8) rectangle (6,9);
        \filldraw[fill=black!5!white, draw=black] (6,7) rectangle (7,8);
        \filldraw[fill=black!5!white, draw=black] (6,8) rectangle (7,9);
        \filldraw[fill=black!5!white, draw=black] (6,9) rectangle (7,10);
        \filldraw[fill=black!5!white, draw=black] (9,8) rectangle (10,9);
        \filldraw[fill=black!5!white, draw=black] (10,8) rectangle (11,9);
        \filldraw[fill=black!5!white, draw=black] (10,9) rectangle (11,10);
        \filldraw (3, 8) circle (0 pt) node[above] {$v$};
        \filldraw (0, 10) circle (0 pt) node[above] {$\P$};
        \filldraw (5, 10) circle (0 pt) node[above] {$\P_1$};
        \filldraw (9, 10) circle (0 pt) node[above] {$\P_2$};
  \end{tikzpicture}
  \caption{$e(\KK[\P])=14$}\label{ex3}
\end{figure}
\end{Example}
\begin{Example}\label{ex}
Let $\P_{m,n}$ be the stack polyomino on $[m]\times[n]$ with $V(\P_{m,n})=[m]\times [n]$. The multiplicity of $\KK[\P_{m,n}]$ was computed in \cite{HT} and \[e(\KK[\P_{m,n}])={m+n-2\choose m-1}.\]
Now, we consider $k<n$ to be a positive integer and $\P_k$ to be the polyomino of Figure~\ref{ex4}. It consists of a rectangle of size $[m-1]\times [n]$ together with a column of cells of height equal to $k$. By Theorem~\ref{mult},
\[e(\KK[\P_k])=e(\KK[\P_{k-1}])+e(\KK[\P_{m-1,n-k}])=e(\KK[\P_{k-1}])+{{m+n-k-3}\choose{m-2}}.\]
Applying recursively this formula, we obtain
\[e(\KK[\P_k])={m+n-3 \choose m-2}+{m+n-4 \choose m-2}+\cdots+{m+n-k-3\choose m-2}=\]\[={m+n-2 \choose m-1}-{m+n-k-3\choose m-1}.\]
\begin{figure}
    \centering
      \begin{tikzpicture}[domain=0:12]
        \draw[-,black] (0,0) -- (0,7) -- (3,7) -- (3,4) -- (4,4) -- (4,0) -- (0,0);
        \draw[-,black, dashed] (3,0) -- (3,4);
        \filldraw (4,4) circle (0.75pt) node[right] {$(m,k)$};
        \draw[-,black] (0,-0.5) -- node[near start,below] {$m-1$} (3,-0.5);
        \draw[-,black] (-0.5,0) -- node[near start,left] {$n$} (-0.5,7);
    \end{tikzpicture}
    \caption{}\label{ex4}
\end{figure}
\end{Example}
\begin{Example}
Let $\P(m,n,k_1, k_2,\ldots,k_l)$ be the polyomino of Figure~\ref{ex5}. This is an example of one sided ladder with the last $l$ columns of heights $k_1,\ldots, k_l$. Hilbert series of one sided ladders have been considered in \cite{W}. By Theorem~\ref{mult},
\[e(\KK[\P(m,n,k_1, k_2,\ldots,k_l)])=\]\[e(\KK[\P(m-1,n-k_l+1,k_1-k_l+1, k_2-k_l+1,\ldots,k_{l-1}-k_l+1)])+\]\[+e(\KK[\P(m,n,k_1, k_2,\ldots,k_{l-1},k_l-1)])\]\[=e(\KK[\P(m-1,n-k_l+1,k_1-k_l+1, k_2-k_l+1,\ldots,k_{l-1}-k_l+1)])+\]
\[+e(\KK[\P(m-1,n-k_l+2,k_1-k_l+2, k_2-k_l+2,\ldots,k_{l-1}-k_l+2)])+\]\[+e(\KK[\P(m,n,k_1, k_2,\ldots,k_{l-1},k_l-2)])=\]
\[\cdots=e(\KK[\P(m-1,n-k_l+1,k_1-k_l+1, k_2-k_l+1,\ldots,k_{l-1}-k_l+1)])+\]\[+e(\KK[\P(m-1,n-k_l+2,k_1-k_l+2, k_2-k_l+2,\ldots,k_{l-1}-k_l+2)])+\]
\[\cdots+e(\KK[\P(m-1,n-1,k_1-1, k_2-l,\ldots,k_{l-1}-1)])\]\[+e(\KK[\P(m-1,n,k_1, k_2,\ldots,k_{l-1})]).\]
In other words, we have
\[e(\KK[\P(m,n,k_1, k_2,\ldots,k_l)])=\]
\[\sum_{j_1=0}^{k_l-1}e(\KK[\P(m-1,n-j_1,k_1-j_1, k_2-j_1,\ldots,k_{l-1}-j_1)]).\]
By iterating formula, we obtain
\[e(\KK[\P(m,n,k_1, k_2,\ldots,k_l)])=\]
\[\sum_{j_1=0}^{k_l-1}(\sum_{j_2=0}^{k_{l-1}-j_1-1}e(\KK[\P(m-2,n-j_1-j_2,k_1-j_1-j_2, k_2-j_1-j_2,\ldots,k_{l-2}-j_1-j_2)]))=\]
\[\cdots\]\[\sum_{j_1=0}^{k_l-1}(\sum_{j_2=0}^{k_l-j_1-1}\cdots (\sum_{j_{l-1}=0}^{k_2-j_1-\cdots -j_{l-2}-1}(e(\KK[\P(m-l+1,n-j_1-\cdots -j_{l-1},k_1-j_1-\cdots -j_{l-1})]))))\]
\[=\sum_{j_1=0}^{k_l-1}(\sum_{j_2=0}^{k_l-j_1-1}\cdots (\sum_{j_{l-1}=0}^{k_2-j_1-\cdots -j_{l-2}-1}({{(m-l+1)+(n-j_1-\cdots-j_{l-1})-2}\choose {(m-l+1)-1}}-\]\[-{{(m-l+1)+(n-j_1-\cdots-j_{l-1})-(k_1-j_1-\cdots-j_{l-1})-3}\choose{(m-l+1)-1}}))).\]
Thus, \[e(\KK[\P(m,n,k_1, k_2,\ldots,k_l)])=\]\[=\sum_{j_1=0}^{k_l-1}(\sum_{j_2=0}^{k_l-j_1-1}\cdots (\sum_{j_{l-1}=0}^{k_2-j_1-\cdots -j_{l-2}-1}({{m+n-l-j_1-\cdots-j_{l-1}-1}\choose {m-l}}-\]\[-{{m+n-l-k_1-2}\choose{m-l}}))).\]
\begin{figure}
    \centering
      \begin{tikzpicture}[domain=0:10]
        \draw[-,black] (0,0) -- (0,9) -- (2,9) -- (2,8) -- (3,8) -- (3,6) -- (4,6) -- (4,5) -- (5,5) -- (5,2) -- (6,2) -- (6,0) -- (0,0);
        \filldraw (6, 2) circle (0.80 pt) node[right] {$(m,k_l)$};
        \filldraw (5, 5) circle (0.80 pt) node[right] {$(m-1, k_{l-1})$};
        \filldraw (4, 6) circle (0.80 pt) node[right] {$(m-2,k_{l-2})$};
        \filldraw (3, 8) circle (0.80 pt) node[right] {$(m-l+1,k_1)$};
        \draw[-,black] (0,-0.5) -- node[near start,below] {$m$} (6,-0.5);
        \draw[-,black] (-0.5,0) -- node[near start,left] {$n$} (-0.5,9);
    \end{tikzpicture}
    \caption{}\label{ex5}
\end{figure}
\end{Example}

One may, of course, approach the computation of the multiplicity in a recursive way for arbitrary convex polyominoes. Finding the appropriate order of the variables in concordance to that one described in \cite{OTH} is not difficult as we will see in the examples below. What is difficult in the general case is to identify the link of a suitable chosen vertex as a simplicial complex of another polyomino related to the original one. We illustrate part of these difficulties in the following examples.
\begin{Example}\label{exx}
Let $\P$ be the convex polyomino of Figure~\ref{ex6}. According to the proof of \cite{OTH}, the generators of $I_{\P}$ form the reduced Gr\"obner basis of $I_{\P}$ with respect to the reverse lexicographical order induced by the following order of variables: $x_{34}>x_{33}>x_{32}>x_{31}>x_{24}>x_{23}>x_{22}>x_{14}>x_{13}>x_{12}>x_{43}>x_{42}>x_{41}>x_{53}>x_{52}$. We consider the vertex $v=(5,3)$. The link of $v$ in $\Delta_{\P}$ is the cone of the vertex $(5,2)$ with the simplicial complex which we may associate to the collection of cells $Q$ displayed in Figure~\ref{ex6} in a similar way which we used for the stack polyominoes. The problem is that the collection $Q$ is no longer a convex polyomino.
\begin{figure}
  \centering
  \begin{tikzpicture}[domain=0:11]
        \filldraw[fill=black!5!white, draw=black] (1,2) rectangle (2,3);
        \filldraw[fill=black!5!white, draw=black] (1,3) rectangle (2,4);
        \filldraw[fill=black!5!white, draw=black] (2,2) rectangle (3,3);
        \filldraw[fill=black!5!white, draw=black] (2,3) rectangle (3,4);
        \filldraw[fill=black!5!white, draw=black] (3,1) rectangle (4,2);
        \filldraw[fill=black!5!white, draw=black] (3,2) rectangle (4,3);
        \filldraw[fill=black!5!white, draw=black] (4,2) rectangle (5,3) circle (0 pt) node[above] {$v$};
        \filldraw[fill=black!5!white, draw=black] (6,3) rectangle (7,4);
        \filldraw[fill=black!5!white, draw=black] (7,3) rectangle (8,4);
        \filldraw[fill=black!5!white, draw=black] (8,2) rectangle (9,3);
        \filldraw (1, 5) circle (0 pt) node[above] {$\P$};
        \filldraw (6, 5) circle (0 pt) node[above] {$Q$};
  \end{tikzpicture}
  \caption{}\label{ex6}
\end{figure}
\end{Example}
\begin{Example}
We consider $\P$ to be the convex polyomino of Figure~\ref{ex7}. In this case the order of the variables is
$x_{24}>x_{23}>x_{22}>x_{14}>x_{13}>x_{12}>x_{43}>x_{42}>x_{41}>x_{33}>x_{32}>x_{31}>x_{53}>x_{52}$. Let $v=(5,3)$. As in Example~\ref{exx}, the link of $v$ is the cone of the vertex $(5,2)$ with the simplicial complex which we may associate to the collection of cells $Q$ displayed in Figure~\ref{ex7}. We notice that the collection $Q$ is not a convex polyomino.
\begin{figure}
  \centering
  \begin{tikzpicture}[domain=0:11]
        \filldraw[fill=black!5!white, draw=black] (1,2) rectangle (2,3);
        \filldraw[fill=black!5!white, draw=black] (1,3) rectangle (2,4);
        \filldraw[fill=black!5!white, draw=black] (2,2) rectangle (3,3);
        \filldraw[fill=black!5!white, draw=black] (3,1) rectangle (4,2);
        \filldraw[fill=black!5!white, draw=black] (3,2) rectangle (4,3);
        \filldraw[fill=black!5!white, draw=black] (4,2) rectangle (5,3) circle (0 pt) node[above] {$v$};
        \filldraw[fill=black!5!white, draw=black] (6,3) rectangle (7,4);
        \filldraw[fill=black!5!white, draw=black] (8,2) rectangle (9,3);
        \filldraw (1, 5) circle (0 pt) node[above] {$\P$};
        \filldraw (6, 5) circle (0 pt) node[above] {$Q$};
  \end{tikzpicture}
  \caption{}\label{ex7}
\end{figure}
\end{Example}

\end{document}